\documentclass[11pt, reqno]{amsart} 
\usepackage[dvipsnames]{xcolor}
\usepackage[T1]{fontenc}
\usepackage{lmodern}
\usepackage{mathtools, amsfonts, amssymb, mathrsfs, bm, stmaryrd}

\usepackage[utf8]{inputenc}
\usepackage[T1]{fontenc}
\usepackage{graphicx}
\usepackage{tikz}
\usetikzlibrary{shapes, arrows.meta, positioning, calc, backgrounds, lindenmayersystems, decorations.pathreplacing}
\usepackage{pgfplots, pgfplotstable}
\pgfplotsset{compat=1.17}

\usepackage{float}
\usepackage{svg}
\usepackage{wrapfig}
\usepackage{subcaption}
\usepackage{hyperref}
\pgfdeclarelindenmayersystem{cantor set}{
  \rule{F -> FfF}
  \rule{f -> fff}
}
\numberwithin{equation}{section}
\raggedright
\allowdisplaybreaks

\usepackage[letterpaper,hmargin=1.0in,vmargin=1.0in]{geometry}
\parskip 	\smallskipamount

\newcounter{dummy} \numberwithin{dummy}{section}
\newtheorem{corollary}[dummy]{Corollary}
\newtheorem{proposition}[dummy]{Proposition}
\newtheorem{theorem}[dummy]{Theorem}
\newtheorem{definition}[dummy]{Definition}
\newtheorem{lemma}[dummy]{Lemma}

\newtheorem*{remark}{Remark}

\newcommand{\N}{\ensuremath{\mathbb{N}}}
\newcommand{\R}{\ensuremath{\mathbb{R}}}
\newcommand{\Z}{\ensuremath{\mathbb{Z}}}
\newcommand{\Q}{\ensuremath{\mathbb{Q}}}

\newcommand{\PP}{\ensuremath{\mathbb{P}}}

\newcommand{\supp}[1]{\mathrm{supp}_{-\infty}(#1)}

\newcommand{\norm}[1]{\left\lVert#1\right\rVert}

 \newcommand\restrict[2]{
   \left.\kern-\nulldelimiterspace 
   #1
   \littletaller 
   \right|_{#2}%
   }
 \newcommand{\littletaller}{\mathchoice{\vphantom{\big|}}{}{}{}}

 \makeatletter
 \newcommand*{\scaleddelims}[3]{%
   \ensuremath{%
     \mathpalette{\@scaleddelims{#1}{#2}}{#3}%
   }%
 }   
 \newcommand*{\@scaleddelims}[4]{%
   \begingroup
     #3%
     \sbox0{$\m@th#3\vphantom{A}#4$}%
     \setbox2\vbox{\hbox{$\m@th#3#1$}\kern\z@}%
     \setbox4\vbox{\hbox{$\m@th#3#2$}\kern\z@}%
     \setbox6\hbox{$#3\vcenter{}$}%
     \ifx\downharpoonleft#1\relax  
       \let\DelimLeft=L%
     \else\ifx\upharpoonleft#1%
       \let\DelimLeft=L%
     \else\ifx\downharpoonright#1%
       \let\DelimLeft=R%
     \else\ifx\upharpoonright#1%
       \let\DelimLeft=R%
     \fi\fi\fi\fi
     \ifx\downharpoonleft#2\relax
       \let\DelimRight=L%
     \else\ifx\upharpoonleft#2\relax
       \let\DelimRight=L%
     \else\ifx\downharpoonright#2\relax
       \let\DelimRight=R%
     \else\ifx\upharpoonright#2\relax
       \let\DelimRight=R%
     \fi\fi\fi\fi
     \ifx\DelimLeft L%
       \wd2=.6\wd2
     \fi
     \ifx\DelimRight L%
       \wd4=.6\wd4
     \fi
     \ifx\DelimLeft R%
       \sbox2{\kern-.4\wd2\box2}%
     \fi
     \ifx\DelimRight R%
       \sbox4{\kern-.4\wd4\box4}%
     \fi
     \dimen0=\ht0 %
     \advance\dimen0 by -\ht6 %
     \dimen2=\dp0 %
     \advance\dimen2 by \ht6 %
     \ifdim\dimen2>\dimen0 %
       \dimen0=\dimen2 %
     \else
       \dimen0=\dimen0 %
     \fi
     \dimen2=\ht6 %
     \advance\dimen2 by -\dimen0 %
     \dimen0=2\dimen0 %
     \def\DelimCorr{%
       \mskip.5\thinmuskip
       \nonscript\mskip.5\thinmuskip
     }%
     \mathopen{%
       \ifx\DelimLeft R\DelimCorr\fi
       \raisebox{\dimen2}{\resizebox{!}{\dimen0}{\box2}}%
       \ifx\DelimLeft L\DelimCorr\fi
     }%
     \begingroup
       #3#4%
     \endgroup
     \mathclose{%
       \ifx\DelimRight R\DelimCorr\fi
       \raisebox{\dimen2}{\resizebox{!}{\dimen0}{\box4}}%
       \ifx\DelimRight L\DelimCorr\fi
     }%
   \endgroup
 }\makeatother

 \newcommand{\restr}[2]{#1\scaleddelims{\kern-0.5\nulldelimiterspace\upharpoonright}{\vphantom{.}}{_{#2}}}
\newcommand{\diff}{\ensuremath{\operatorname{d}\!}}
\makeatletter
\newcommand{\customlabel}[2]{%
   #2\def\@currentlabel{#2}\label{#1}%
}
\usepackage{comment}
\usepackage{amsfonts, mathrsfs, amsmath}
\usetikzlibrary{positioning}

\title{The KPZ fixed point and Brownian motion share the same null sets}

\author{Pantelis Tassopoulos}
\address{Department of Pure Mathematics and Mathematical Statistics, 
University of Cambridge, Cambridge, United Kingdom}
\email{pkt28@cam.ac.uk}

\author{Sourav Sarkar}
\address{Department of Pure Mathematics and Mathematical Statistics, 
University of Cambridge, Cambridge, United Kingdom}
\email{ss2871@cam.ac.uk}

\begin{document}

\subjclass[2010]{$82B23$, $82C22$ and $60H15$}
\date{}
\begin{abstract}
We show that the increments of the KPZ fixed point started from arbitrary initial data are \emph{mutually} absolutely continuous with respect to Brownian motion with diffusion parameter $2$ on compacts, extending the one-sided Brownian absolute continuity relation of the KPZ fixed point established in \cite{sarkar2021brownian}. 

We also show that additive Brownian motion is absolutely continuous with respect to the centred Airy sheet on compacts, but it is not mutually absolutely continuous globally.

As applications, we show that with probability strictly between zero and one, there exist record times of the KPZ fixed point away from any reference point, obtain a characterisation for the hitting probabilities of the graph of the KPZ fixed point to be positive in terms of a certain thermal capacity in the sense of \cite{watson1978corrigendum, watson1978thermal} and compute essential suprema of Hausdorff dimensions of these random intersections. Finally, we compute essential suprema of Hausdorff dimensions of images of subsets in the plane under the Airy sheet and give a condition for the positivity of their Lebesgue measure in terms of Bessel-Riesz capacity. 
\end{abstract}

\maketitle
\vspace{-0.75cm}
\begin{figure}[H]
    \centering
    \includegraphics[width=0.8\linewidth]{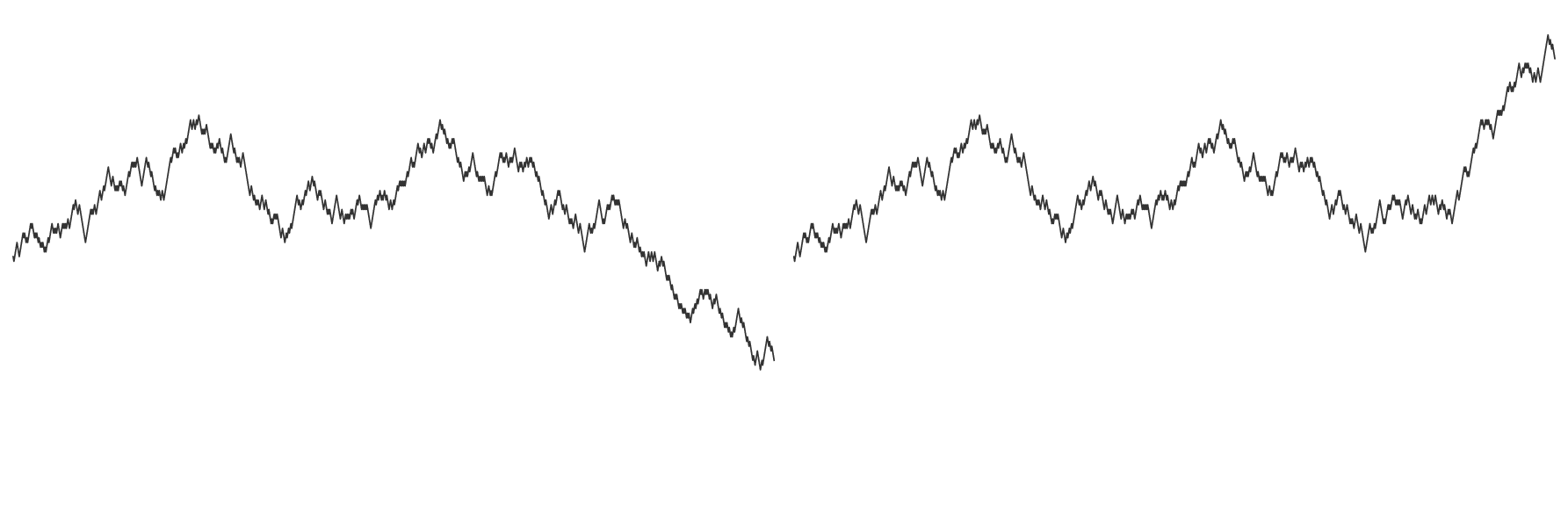}
    \vspace{-0.75cm}
    \caption{\textbf{Left}: the centred KPZ fixed point started from flat initial data. \textbf{Right}: Brownian motion with diffusion parameter $2$.}
    \label{fig: flat bm}
\end{figure}
\tableofcontents

\section{Introduction} 
In 1986, Kardar, Parisi and Zhang \cite{kardar1986dynamic} predicted universal scaling behaviour for many planar random growth processes. Models in the KPZ universality class have a height function $t\mapsto h_t$, $t\ge 0$ which is conjectured to converge at large time and small length scales under the KPZ $1:2:3$ scaling to a universal object called the KPZ fixed point, $t\mapsto \mathfrak{h}_t$, $t\ge 0$. In \cite{quastelkpzfixedpoint2021}, Matetski-Quastel-Remenik constructed the KPZ fixed point as a Markov process in $t$, and they showed that it is a limit of the
height function evolution of the totally asymmetric simple exclusion process with arbitrary initial condition. The natural domain of initial data for the KPZ fixed point is the space of upper semicontinuous functions satisfying a certain sub-parabolic growth condition. The KPZ fixed point at time $t$ started from an admissible initial data $h_0$ can also be expressed in terms of a variational formula with respect to a random metric on space-time $\R^4_\uparrow=\{(x,s;y,t)\in \R^4: s<t\}$, $\mathcal L: \R^4_\uparrow\mapsto\R$, the directed landscape, introduced in \cite{DOV}, as follows
\[\mathfrak{h}_t(y) = \sup_{x\in \R} (h_0(x) + \mathcal{L}(x, 0; y, t))\,.\]

In \cite{sarkar2021brownian}, it was shown that the law of $\mathfrak{h}_t(y)-\mathfrak{h}_t(y_1)$ for $y_1 \le y\le y_2$ is {absolutely continuous} with respect to the law of a Brownian motion starting from $(y_1,0)$ with diffusion parameter $2$ on $[y_1,y_2]$.  We extend the result to \emph{mutual} absolute continuity and show that Brownian motion is absolutely continuous with respect to the law of the KPZ fixed point (up to height shift). This is the main result of this paper, which we now state informally.

\begin{theorem}\label{thm: Absolute continuity KPZ}\label{thm: main informal} Let $t>0$, $-\infty< y_1< y_2<\infty$; then for an arbitrary admissible initial condition $h_0$, the law of $\mathfrak{h}_t(y)-\mathfrak{h}_t(y_1)$ for $y\in [y_1,y_2]$, where $\mathfrak{h}_t$ is the KPZ fixed point at time $t$ started from $h_0$, 
is \emph{mutually absolutely continuous} with respect to the law of a Brownian motion starting from $(y_1,0)$ with diffusion parameter $2$ on $[y_1,y_2]$. 
\end{theorem}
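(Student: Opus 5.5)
Since \cite{sarkar2021brownian} already gives that the law of $\mathfrak{h}_t(\cdot)-\mathfrak{h}_t(y_1)$ on $[y_1,y_2]$ is absolutely continuous with respect to Brownian motion with diffusion parameter $2$, only the reverse direction is needed. Take a Borel set $A\subset C([y_1,y_2])$ with $\PP(\mathfrak{h}_t(\cdot)-\mathfrak{h}_t(y_1)\in A)=0$. The goal is to show that $\PP(B\in A)=0$, where $B$ is a rate-two Brownian motion started from $(y_1,0)$. I will go through the Airy line ensemble / directed landscape description of $\mathfrak{h}_t$, together with the Brownian Gibbs resampling property.

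\textbf{Step 1 (reduction to a top curve).} Write $\mathfrak{h}_t(y)=\sup_x(h_0(x)+\mathcal L(x,0;y,t))$. By KPZ scaling invariance it suffices to take $t=1$. I would use the metric composition and the representation of $\mathcal L(\cdot,0;\cdot,1)$ via the Airy line ensemble (the Dauvergne--Ortmann--Vir\'ag melon/last-passage description). This lets me write $\mathfrak{h}_1$ on $[y_1,y_2]$ as a last-passage value across a Brownian-Gibbsian line ensemble $\mathcal P=(\mathcal P_1,\mathcal P_2,\dots)$, with boundary data determined by $h_0$. The key structural fact is this: conditionally on $\mathcal F_{\mathrm{ext}}$, meaning everything outside a window $[y_1-1,y_2+1]$ together with the lower curves, the top curve on the window is a Brownian bridge of rate $2$ conditioned to stay above $\mathcal P_2$. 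On a positive-probability event this lower curve is low, so the conditioning has positive, bounded-below acceptance probability.

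\textbf{Step 2 (mutual absolute continuity of the conditioned bridge).} Conditionally on $\mathcal F_{\mathrm{ext}}$, the law of $\mathcal P_1$ on the window has density $\mathbf 1\{\text{nonintersection}\}/Z$ with respect to a free Brownian bridge, with $Z>0$ almost surely. A Brownian bridge restricted to an interior subinterval $[y_1,y_2]$, after subtracting its value at $y_1$, is mutually absolutely continuous with Brownian motion; this follows from Girsanov or explicit bridge densities. Hence the restriction is mutually absolutely continuous on the event that the avoidance constraint is slack on $[y_1,y_2]$. To handle the constraint, I use that any Brownian-null set $A$ is invariant in measure under the following change: a bridge event $\{\text{increments}\in A\}$ of probability zero stays null once intersected with the avoidance event. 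Conversely, if $\PP(B\in A)>0$, I would show that the event $\{B\in A\}\cap\{\text{avoidance}\}$ has positive bridge probability. To do so, I lift $B$ by a large constant, which is possible because $A$ concerns increments only, and choose endpoint values so that avoidance holds with positive probability jointly with $B\in A$. The argument is a Markov/Gibbs one: condition on the values at $y_1$ and $y_2$, then use the positivity of the free-bridge density of the endpoint heights.

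\textbf{Step 3 (passing from $\mathcal P_1$ to $\mathfrak{h}_1$).} The remaining step is to transfer the statement from $\mathcal P_1$ to $\mathfrak h_1$, which is a supremum that may involve lower curves and $h_0$. This is the main obstacle. My first attempt is to find a positive-probability event $G\in\mathcal F_{\mathrm{ext}}$ on which $\mathfrak h_1(y)-\mathfrak h_1(y_1)=\mathcal P_1(y)-\mathcal P_1(y_1)+F(y)$ on $[y_1,y_2]$, with $F$ being $\mathcal F_{\mathrm{ext}}$-measurable and absolutely continuous with derivative in $L^2$. Such an $F$ is a Cameron--Martin shift, so it preserves Brownian null sets in both directions. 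Concretely, on $G$ the geodesics from the maximiser of $h_0+\mathcal L$ should enter the window through the top line in a way determined by the exterior data. If this exact decomposition is unavailable, the fallback is to combine Step 2 with the one-sided result of \cite{sarkar2021brownian}. That would mean exhibiting $\mathfrak h_1$ increments as a Brownian Gibbs top curve directly, through the KPZ-fixed-point line ensemble of the Busani/Sarkar type. Once this is done, $\PP(B\in A)>0$ implies $\PP(\mathfrak h_1-\mathfrak h_1(y_1)\in A)\ge \mathbb E[\mathbf 1_G\,\PP(\cdot\in A\mid\mathcal F_{\mathrm{ext}})]>0$, which is a contradiction.
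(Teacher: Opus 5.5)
There is a genuine gap in Step 3, which you leave open, and your first attempt there is set up the wrong way. Your Step 2 essentially reproves that the increments of the top curve $\mathcal{A}_1$ are mutually absolutely continuous with Brownian motion, which is already known (Theorem~\ref{thm: mut abs cont airy bm}). The whole content of the statement is therefore in Step 3.

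\textbf{The coincidence event is not exterior-measurable.} In the representation that works, $\mathfrak{h}_1=WF_1\vee WG_1$ on some $[0,y_0]\supset[y_1,y_2]$, as in \eqref{eq: pitman melon fin init KPZ}. Here $F_1=G^{h_0}_1+\mathcal{A}_1(\cdot)-\mathcal{A}_1(0)$, the barrier is $F_2=\max_{\ell\ge 2}(G^{h_0}_\ell+\mathcal{A}[(0,\ell)\to(\cdot,2)])$, and $(G_1,G_2)$ is the mirror-image pair coming from $y_0$. The increments of $\mathfrak{h}_1$ on $[y_1,y_2]$ agree \emph{exactly} with those of $\mathcal{A}_1$ on the event that $F_1\ge F_2$ and $G_1\ge G_2$ on suitable subintervals. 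This is an avoidance event depending on $\mathcal{A}_1$ inside the window, so it is not in $\mathcal{F}_{\mathrm{ext}}$, and no Cameron--Martin shift enters. What must be shown is that it has positive conditional probability given the exterior data \emph{and} the Brownian increments on $[y_1,y_2]$.

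\textbf{The barrier must be exterior-measurable.} For Gibbs resampling to apply, the barrier must be measurable with respect to the lower curves together with $\sigma(\mathcal{A}_i(x): x\le 0\text{ or }x\ge y_0)$. So the boundary data $G^{h_0}_\ell,\tilde G^{h_0}_\ell$, which are random semi-infinite last passage values rather than data `determined by $h_0$', must be exterior-measurable. For $h_0$ with support unbounded in both directions (e.g.\ flat data), the DOV half-plane construction you cite does not give this. You need the full-plane coupling \eqref{eq: airy sheet coupling full space}, with the left part of $h_0$ entering through the reflected ensemble to the right of $y_0$. You also need a shape-estimate truncation so that only finitely many lines matter and the barrier is continuous.

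\textbf{A strict gap at the window edge is needed.} The paper's argument hinges on the top curve starting strictly above the barrier where the resampling window meets the boundary data: $F_1(0)=G^{h_0}_1>G^{h_0}_2=F_2(0)$, with the analogue at $y_0$. It also needs continuity of the infinite Skorokhod reflection $F_2$ at $0$. These are Proposition~\ref{prop: separation of initial data}, which rests on Lemma~\ref{lemma: semi-inf geod jump time pos} (semi-infinite geodesic jump times are strictly negative), and Lemma~\ref{lemma: infinite lpp cont at zero}. If $G^{h_0}_1=G^{h_0}_2$, the bridge issued from $\mathcal{A}_1(0)$ would start on a barrier with locally Brownian fluctuations and cross it immediately almost surely. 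The avoidance event would then be null, and your Step~2 mechanism (raising the value at $y_1$, bridge support, stochastic domination) could not be run.

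\textbf{The fallback does not close the gap.} Realising $\mathfrak{h}_1$ itself as the top curve of a Brownian Gibbs line ensemble for arbitrary $h_0$ presupposes an object you neither construct nor cite precisely, and it is essentially equivalent to the claim. The one-sided result of \cite{sarkar2021brownian} says nothing about the reverse relation $\mu\ll\nu^{h_0}$.
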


For the precise statement, see Theorem~ \ref{thm: mut abs cont finitary init data}. This gives a very strong comparison between the two laws which is equivalent to saying the KPZ fixed point (up to a height shift) shares the same null events as Brownian motion. 

In Section \ref{sec: top supp airy sheet}, we apply the techniques developed in Section \ref{sec: mut abs cont BM} to the Airy sheet. We prove a non-disjointness result for geodesics in the Airy sheet, Proposition \ref{prop: Airy sheet quadrangle equality} and an absolute continuity result involving additive Brownian motion, Theorem \ref{thm: abs cont airy sheet}. 

In Section \ref{sec: applications}, we discuss applications of Theorem~ \ref{thm: main informal} to the set of record times of the KPZ fixed point in Corollary \ref{cor: record times}. We also characterise when certain hitting probabilities of the graph of the KPZ fixed point are positive in Corollary \ref{cor: thermal cap} and are able to compute the essential suprema of Hausdorff dimensions of these random intersections in Corollaries \ref{cor: int prob haus dim}, \ref{cor: positiveLeb}, \ref{cor: dimh}. These results make use of the full mutual absolute continuity of Theorem~ \ref{thm: mut abs cont finitary init data} (and not just the absolute continuity provided in \cite[Theorem~ 1.2]{sarkar2021brownian}) in an essential way, as a priori, if one drops mutual absolute continuity, one can obtain counterexamples to the statements above. Finally, using Theorem \ref{thm: abs cont airy sheet} and tools from potential theory we study geometric properties of the images of subsets of the plane under the Airy sheet in Corollaries \ref{cor: haus dim airy sheet image} and \ref{cor: bessel-riesz cap pos}. 

\subsection{Related works}

The Brownian nature of models in the KPZ universality class, including the KPZ fixed point, has been a subject of intense research in recent times. Aside from integrable inputs, see for instance \cite{baik1999distribution, quastelkpzfixedpoint2021, liu2019multi} and \cite{johansson2019multi, johansson2017two, johansson2019two}, probabilistic and geometric methods have featured prominently ever since Corwin and Hammond proved in \cite{corwin2014brownian} that the parabolic Airy line ensemble  admits a Brownian Gibbs resampling property (see subsection \ref{subsec: Airy line ensemble}). For a more detailed account of recent developments, one can consult the work of Calvert, Hammond and Hegde \cite{calvert2019brownian} and the references therein.

One version of local Brownianness is to show that the local limits of the
$\text{Airy}_2$ process (the narrow wedge solution to the KPZ fixed point at unit time, i.e. $h_0(0) = 0$ and $h_0(x) = -\infty$, $x\neq 0$) converge in law to a Brownian motion, \cite{hagg2008local}, \cite{cator2015local}, \cite{quastel2013local}. In fact, \cite{quastel2013local} also establishes H\"{o}lder $1/2$-
continuity of the $\text{Airy}_2$ and $\text{Airy}_1$ processes (solution to KPZ fixed point at unit time started from flat, i.e. $h_0\equiv 0$ initial data). The  H\"{o}lder $1/2$- continuity and the locally Brownian nature (in
terms of convergence of the finite dimensional distributions) were established in \cite{quastelkpzfixedpoint2021}. Such H\"{o}lder continuity
results and local limits for certain initial conditions have also been established in \cite{pimentel2018local} and \cite{pimentel2020brownian} (see also \cite{johansson2017two},
\cite{johansson2019two}). A stronger notion of the locally Brownian nature is absolute continuity with respect to Brownian motion
on compact intervals. That the $\text{Airy}_2$ process is Brownian on compacts
was first proved in \cite{corwin2014brownian} using the Brownian Gibbs property; this was considerably considerably strengthened in \cite{dauvergne2024wienerdensitiesairyline}, where boundedness of the Radon Nikodym derivative was established.

For general initial conditions, the picture is less complete. A result providing a more quantitative notion of Brownian regularity, called \emph{patchwork quilt of Brownian fabrics}, was established in Hammond \cite{hammond2019patchwork} and \cite{calvert2019brownian}. Roughly the result states that the KPZ fixed point $\mathfrak{h}_t(\cdot)$ on a unit interval is the result of `stitching' a random number of profiles (or patches), where each profile is absolutely continuous with respect to a Brownian motion with Radon-Nikodym derivative in $L^p$ for all $p<3$. The authors conjectured (Conjecture $1.3$ in \cite{hammond2019patchwork}) that one can dispense with these random patches and establish $L^p$ estimates for all $p>1$ for the Radon-Nikodym derivative, a problem which remains open. 

By different means, the authors in \cite{sarkar2021brownian} proved absolute continuity of the KPZ fixed point with respect to Brownian motion on compacts for general initial conditions, \cite[Theorem~ 1.2]{sarkar2021brownian}. In \cite{tassopoulos2025inhomogeneousbrownian, tassopoulos2025quantitativebrownianregularitykpz}, we strengthened the above comparison by obtaining an explicit functional relationship between the law of the increments of the KPZ fixed point started from arbitrary initial data and Brownian motion on compacts. Whether the KPZ fixed point is \emph{mutually} absolutely continuous with respect to Brownian motion on compacts, still remained open.

Our main result in Theorem~ \ref{thm: mut abs cont finitary init data} of this paper settles the question of mutual absolute continuity of the KPZ fixed point started from arbitrary initial data against Brownian motion on compacts, by answering it in the affirmative. It crucially leverages the fact that the directed landscape at unit time, the Airy sheet, can be fully recovered as a deterministic function of the Airy line ensemble, \cite[Theorem~ 1.21]{dauvergne2022scalinglimitlongestincreasing}. In particular, we use a new coupling between the Airy sheet and the Airy line ensemble which extends the coupling in \cite{DOV}.

In Section \ref{sec: top supp airy sheet}, we apply the techniques developed in Section \ref{sec: mut abs cont BM} to the Airy sheet. In particular, we prove a non-disjointness result for the geodesics in the directed landscape in Proposition \ref{prop: Airy sheet quadrangle equality}. Moreover, we prove that the additive Brownian motion is absolutely continuous with respect to the Airy sheet (up to centering) on compact subsets of $\R^2$, Theorem \ref{thm: abs cont airy sheet}.

We then discuss applications of the tools developed in Sections \ref{sec: mut abs cont BM} and \ref{sec: top supp airy sheet}. We revisit a certain notion of thermal
capacity from \cite{watson1978corrigendum, watson1978thermal}, \cite{khoshnevisan2015brownianmotionthermal} and use Theorem \ref{thm: mut abs cont finitary init data} to characterise when certain hitting probabilities of the KPZ fixed point are positive.  Finally, using Theorem \ref{thm: abs cont airy sheet}, we compute essential suprema of their Hausdorff dimensions of images of compact subsets in the plane under the Airy sheet and give a condition for them to have positive Lebesgue measure using potential theory for additive Brownian motion, see for example \cite{khoshnevisan1998browniansheetimagesbesselriesz}.

\vspace{-0.1cm}
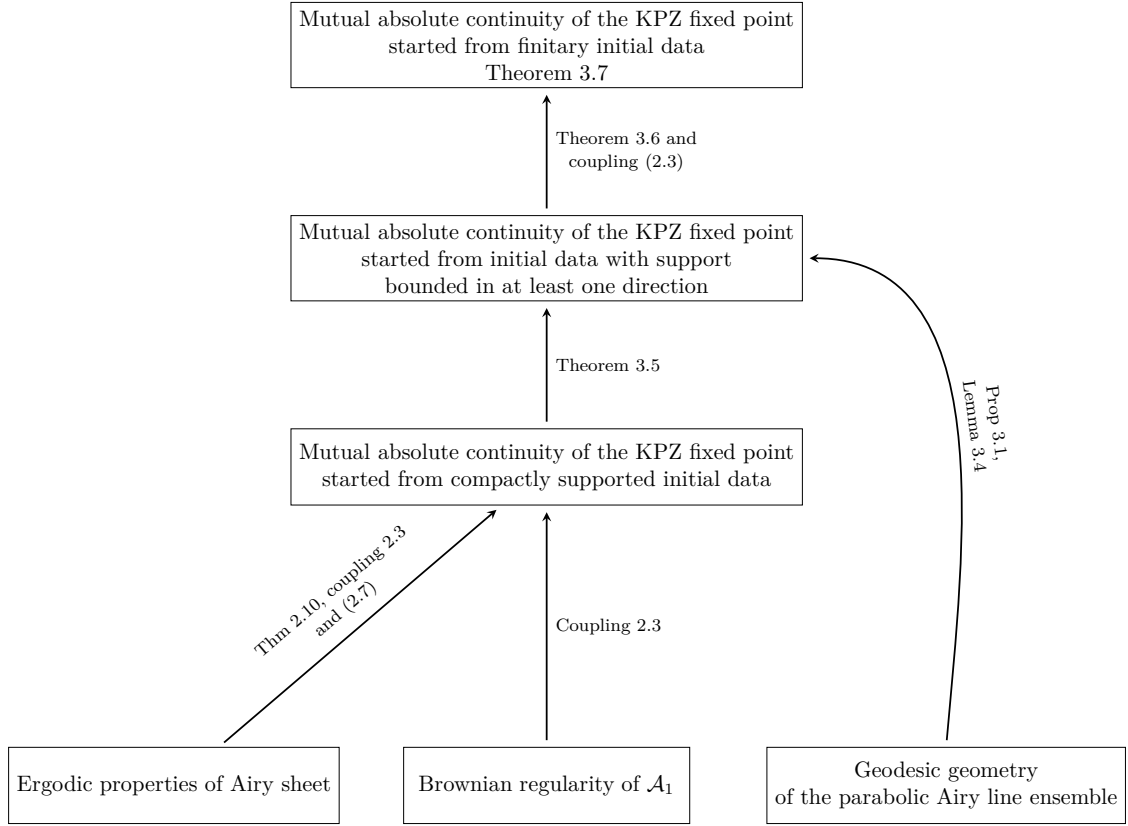
\begin{figure}
\centering

{\NoHyper\resizebox{0.9\textwidth}{!}{\begin{tikzpicture}[
    node distance=2.5cm and 2.5cm,
    startstop/.style={rectangle, minimum width=4.5cm, minimum height=1.2cm, text centered, draw=black, align=center, font=\small},
    process/.style={rectangle, minimum width=4.5cm, minimum height=1.2cm, text centered, draw=black, align=center, font=\small},
    arrow/.style={thick,->,>=stealth,shorten >=3pt,shorten <=3pt}
  ]

  \node (start) [startstop] 
    {Mutual absolute continuity of the KPZ fixed point\\ started from finitary initial data\\ Theorem~ \ref{thm: mut abs cont finitary init data}};

  \node (finitary reg) [process, below=2cm of start] 
    {Mutual absolute continuity of the KPZ fixed point\\ started from initial data with support \\ bounded in at least one direction};

  \node (comp reg) [process, below=2cm of finitary reg] 
    {Mutual absolute continuity of the KPZ fixed point\\ started from compactly supported initial data};

  \node (above) [process, below=3.8cm of comp reg] 
    {Brownian regularity of $\mathcal{A}_1$};

  \node (extra2) [process, left=1cm of above] 
    {Ergodic properties of Airy sheet};

  \node (extra) [process, right=1.2cm of above] 
    {Geodesic geometry\\ of the parabolic Airy line ensemble};

  
  \draw [arrow] (finitary reg) -- node[midway, right, font=\scriptsize, align=center] 
    {Theorem~ \ref{thm: mut abs cont unbounded supp init data} and \\ coupling \eqref{eq: airy sheet coupling full space}} (start); 
    
  \draw [arrow] (comp reg) -- node[midway, right, font=\scriptsize] 
    {Theorem~ \ref{thm: mutual abs cont}} (finitary reg); 

  \draw [arrow] (above) -- node[midway, right, font=\scriptsize] 
    {Coupling \ref{eq: airy sheet coupling full space}} (comp reg); 

  \draw [arrow] (extra2) -- node[midway, sloped, transform shape, anchor=south, font=\scriptsize, yshift=2pt, align=center] 
    {Thm \ref{thm: erg airy line ensemble}, coupling \ref{eq: airy sheet coupling full space}\\ and \eqref{eq: ergod airy sheet}} (comp reg); 

  \draw [arrow] (extra.north) to[out=85, in=0, looseness=1.0] 
    node[pos=0.5, sloped, transform shape, anchor=south, font=\scriptsize, yshift=2pt, align=center] 
    {Prop \ref{prop: separation of initial data},\\ Lemma \ref{lemma: infinite lpp cont at zero}} (finitary reg.east); 

\end{tikzpicture}}\endNoHyper}

\caption{Flowchart of main steps in the proof of Theorem~ \ref{thm: mut abs cont finitary init data}.}
  \label{fig: flowchart mut abs cont}
\end{figure}

\subsection{Organization of the paper}\label{sec: prelims}

First, in Section \ref{sec: prelims} we provide necessary background material including properties of last passage percolation, the Pitman transform and facts about Brownian bridges, ergodic properties of the Airy line ensemble, the Brownian regularity of the parabolic Airy$_2$ process, the symmetries of the Airy sheet as well as couplings thereof to the Airy line ensemble, ending the section with some background and setup for the KPZ fixed point.

In Section \ref{sec: mut abs cont BM}, we prove that for arbitrary initial data one obtains mutual absolute continuity of the laws of the spatial increments of the KPZ fixed point against rate two Brownian motion on compacts; the arguments leverage the construction of the entire Airy sheet as a deterministic function of the Airy line ensemble. Figure \ref{fig: flowchart mut abs cont} shows the key steps of the proof of the main result, Theorem~ \ref{thm: mut abs cont finitary init data}.

In Section \ref{sec: top supp airy sheet}, we prove a non-disjointness result for the geodesics in the Airy sheet, Proposition \ref{prop: Airy sheet quadrangle equality} and that the additive Brownian motion is absolutely continuous with respect to the (centred) Airy sheet on compacts, Theorem \ref{thm: abs cont airy sheet}, but is not mutually absolutely continuous, Proposition \ref{prop: not mut abs cont airy sheet additive brownian motion}.

We then discuss applications of the mutual absolute continuity result, Theorem \ref{thm: mut abs cont finitary init data}, and the absolute continuity result, Theorem \ref{thm: abs cont airy sheet}, in Section \ref{sec: applications}. These pertain to record times of the KPZ fixed point, certain hitting probabilities for the graph of the KPZ fixed point in terms of a certain parabolic capacity and computations of $L^\infty$-norms of Hausdorff dimension of these (random) intersections. Finally, we use Theorem \ref{thm: abs cont airy sheet} and tools from potential theory to study geometric properties of the images of subsets of the plane under the Airy sheet, Corollary \ref{cor: Airy sheet top supp}, including computing the essential suprema of the Hausdorff dimensions of these random sets.

Finally, in the Appendix, Section \ref{sec: appendix}, we prove a non-degeneracy result for geodesic jump times in the Airy line ensemble which we use in  Section \ref{sec: mut abs cont BM}.

\subsection{Notation} Let $\mathscr{C}([a,b];\R^d)$ denote the space of all continuous functions $f:[a,b]\mapsto \R^d$, $d\ge 1$ and $\mathscr{C}_0([a,b]; \R^d)$ denote the space of all $f\in \mathscr{C}([a,b];\R^d)$ with $f(a)=0$. 

We say that a Brownian motion or a Brownian bridge has {rate (or diffusion parameter) $v$} if its quadratic variation in an interval $[s,t]$ is equal to $v(t-s)$. From now on, all Brownian motions/bridges are rate two unless stated otherwise. Moreover, we denote by $\mu$ the Wiener measure associated to a rate two Brownian motion starting from the origin on $\mathscr{C}([0, \infty);\R)$ (or in a slight abuse of notation starting from any other point on the line).

Finally, for a random variable $Y$ on some probability space $(\Omega, \mathscr{F}, \PP)$, we will sometimes denote a version of the regular conditional distribution of $\PP$ given $Y$ (whenever the latter exists, see \cite[Theorem 8.5]{kallenberg2021foundations} for sufficient conditions which will suffice in the present case), by $\PP_Y(\cdot) \equiv \PP(\cdot | \sigma(Y))$. For sigma algebras $\mathcal{A}, \mathcal{B}$ on some set $\Omega$, we denote the minimal sigma algebra containing both $\mathcal{A}$ and $\mathcal{B}$ by $\mathcal{A}\lor \mathcal{B}$.

\section{Preliminaries}\label{sec: prelim}

We first recall the definition of absolute continuity of measures on a measurable space $(\Omega, \Sigma)$. 

\begin{definition}[Absolute continuity]\label{def: abs cont}
    Let $\mu, \nu$ denote measures on $(\Omega, \Sigma)$. Then, we say $\mu$ is \emph{absolutely continuous} with respect to $\nu$, written as $\mu \ll \nu$, if for all $A\in \Sigma$ such that $\nu(A) = 0$, $\mu(A) = 0$. We say two measures $\mu$ and $\nu$ are \emph{mutually absolutely continuous} if both $\mu \ll \nu$ and $\nu \ll \mu$ are satisfied.
\end{definition}

In what follows, a \textit{random line ensemble} is a random variable taking values in an indexed (at most countably infinite) family of continuous paths defined on a common subset of $\R$. 

\subsection{Last passage percolation} We begin with the collection of some preliminary facts regarding last passage percolation (LPP). For more details, see \cite[Section 2]{DOV}.

Formally, let \(I\subset \mathbb{Z}\) be a possibly finite index set and define the space \(\mathscr{C}^I\) of sequences of continuous functions with real domains, that is, the space of maps
$f: \mathbb{R}\times I\to \mathbb{R}\quad (x,i)\mapsto f_i(x)$.

\begin{definition}[Path] Let \(x\leq y \in \mathbb{R}\), and \(m\leq \ell \in \mathbb{Z}\) respectively. A \emph{path}, from \((x,\ell)\) to \((y,m)\) is a non-increasing  function \(\pi: [x,y] \to \mathbb{N}\) which is cadlag on \((x,y)\) and takes the values \(\pi(x)= \ell\) and \(\pi(y)= m\).
\label{def: path}
\end{definition}

This also leads one to naturally define a derived quantity, namely the \textit{last passage value}.

\begin{definition}[Length]\label{def: length} Let \(x\leq y \in \mathbb{R}\) and \(m < k\in\mathbb{Z}\). For each \(m\leq i <k\), let \(t_{k-i}\) denote the jump of the path \(\pi\), on an ensemble \((f_i)_{i\in I}\), from \(f_{i+1}\) to \(f_{i}\). Then the length of \(\pi\) is defined as
\[
\ell(\pi) = f_m(y)-f_m(t_{k-m}) + \displaystyle\sum_{i = 1}^{\ell-m-1}(f_{k-i}(t_{i+1})-f_{k-i}(t_{i}))+f_{k}(t_{1})-f_{k}(x)\,.
\]
\end{definition}

\begin{definition}[Last passage value]\label{def: last passage}
    With \(x\leq y, m<k\) as before and \(f\in \mathscr{C}^I\), define the \emph{last passage value} of \(f\) from \((x,k)\) to \((y,m)\) as
    \begin{equation*}
    f[(x,k)\to(y,m)] \stackrel{\mathrm{def}}{=}\displaystyle \sup_{\pi}\ell(\pi)\,,
    \end{equation*}
where the supremum is over precisely the paths \(\pi\) from \((x,k)\) to \((y,m)\).
\end{definition}
\begin{remark}
    Any path \(\pi\) from \((x,k)\) to \((y,m)\) such that its length is equal to its last passage value is called a \emph{geodesic}. 
\end{remark}

Last passage percolation enjoys the following \emph{metric composition law}, Lemma 3.2 in DOV \cite{DOV}.

\begin{lemma}[Metric composition law]\label{Lemma: Metric Composition}
    Let \(x\leq y \in \mathbb{R}\), \(m < \ell\in\mathbb{Z}\) and \(f\in \mathscr{C}^I\). If \(k\in \{m, \dots, \ell\}\), then we have
    \[
    f[(x,\ell)\to(y,m)] = \displaystyle \sup_{z\in[x,y]}(f[(x,\ell)\to(z,k)]+f[(z,k)\to(y,m)])\,.
    \]
    Furthermore, for any \(z\in [x,y]\), 
    \begin{equation}\label{eq: composition}
    f[(x,\ell)\to(y,m)] = \displaystyle \sup_{k\in \{m, \dots, \ell\}}(f[(x,\ell)\to(z,k)]+f[(z,k)\to(y,m)])
    \end{equation}
\end{lemma}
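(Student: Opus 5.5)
The proof is a cut-and-paste argument on paths: any path can be split at a chosen point, and two paths can be glued there. Both identities follow by proving "$\ge$" and "$\le$" separately from Definitions \ref{def: length} and \ref{def: last passage}.

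\textbf{First identity (split at a line $k$).} Fix $z\in[x,y]$. Take a path $\pi_1$ from $(x,\ell)$ to $(z,k)$ and a path $\pi_2$ from $(z,k)$ to $(y,m)$. Concatenate them: use $\pi_1$ on $[x,z)$ and $\pi_2$ on $[z,y]$. The result is non-increasing, cadlag on $(x,y)$, and runs from $(x,\ell)$ to $(y,m)$. Its length is $\ell(\pi_1)+\ell(\pi_2)$. This holds because the length formula is a telescoping sum of increments $f_i(\text{exit})-f_i(\text{entry})$ over the lines visited. The piece of line $k$ used by $\pi_1$ ends at $z$, and the piece used by $\pi_2$ starts at $z$, so the two contributions on line $k$ add up to the single increment over their union. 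Taking suprema over $\pi_1,\pi_2$ and then over $z$ gives "$\ge$".

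For "$\le$", take any path $\pi$ from $(x,\ell)$ to $(y,m)$. Since $\pi$ is non-increasing with integer values from $\ell$ down to $m\le k$, it occupies line $k$ on a (possibly degenerate) interval. Let $z$ be a point of that interval, for instance the time at which $\pi$ jumps onto line $k$, or $x$ if $k=\ell$. Restrict $\pi$ to $[x,z]$, declaring its value at $z$ to be $k$. This gives a path from $(x,\ell)$ to $(z,k)$. The restriction to $[z,y]$ gives a path from $(z,k)$ to $(y,m)$. By the same additivity, $\ell(\pi)$ is the sum of their lengths, which is at most the right-hand side. Taking the supremum over $\pi$ gives "$\le$".

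\textbf{Second identity (split at a point $z$).} Fix $z\in[x,y]$. For "$\le$", any path $\pi$ has a value $\pi(z)=:k\in\{m,\dots,\ell\}$. Splitting at time $z$ as above gives two admissible paths, and the length is additive, so $\ell(\pi)$ is at most the right-hand side. For "$\ge$", glue any pair of paths meeting at $(z,k)$ to get a single path whose length is the sum. Again the line-$k$ contributions combine into one increment.

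\textbf{Main obstacle.} The only delicate point is bookkeeping at the boundary. The glued object must satisfy the cadlag and endpoint conventions of Definition \ref{def: path}. We must also allow degenerate cases: $z=x$ or $z=y$, or jump times coinciding. In those cases some intervals have zero length and contribute $f_i(t)-f_i(t)=0$, so additivity still holds. I would handle this by rewriting the length of a path as $\sum_i \bigl(f_i(\text{exit}_i)-f_i(\text{entry}_i)\bigr)$ over the visited lines. Once the length is in this form, concatenation and splitting are visibly additive.
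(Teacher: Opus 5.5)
Your cut-and-paste argument is correct and is the standard one. The paper gives no proof of its own; it imports the lemma from \cite[Lemma 3.2]{DOV}, where it follows directly from splitting and concatenating paths exactly as you describe. The boundary bookkeeping you flag is cleanest if you identify a path with its non-decreasing sequence of jump times: splitting where $\pi$ crosses level $k$ (even if it skips that line) and gluing at $(z,k)$ then become truncation and concatenation of sequences, the glued function can be taken right-continuous at $z$ without moving any jump time, and degenerate pieces contribute $f_i(t)-f_i(t)=0$.
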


\subsection{Pitman transform}\label{subsec: pitman trans}

Recall that with $f=(f_1,f_2)$ where $f_i:[0,\infty)\mapsto \R$ for $i=1,2$, for $f\in \mathscr{C}^2([0,\infty);\R)$, we define $\mathrm \mathrm{W}f=(\mathrm{W} f_1,\mathrm{W} f_2)\in \mathscr{C}^2([0,\infty);\R)$, the \emph{Pitman transform} of $f$ as follows. For $x<y\in [0,\infty)$, define the maximal gap size
\[G(f_1,f_2)(x,y)\equiv\max\left(\max_{s\in [x,y]}\big(f_2(s)-f_1(s)\big)\,,\,0\right)\,.\]
Then define
\begin{equation}\label{eq: pitmantrans}
\mathrm{W} f_1(t)=f_1(t)+G(f_1,f_2)(0,t)\,, \mathrm{W} f_2(t)=f_2(t)-G(f_1,f_2)(0,t)\,,
\end{equation}
for all $t\in [0,\infty)$.

One can express the top line of the Pitman transform (also known as the \emph{Skorokhod reflection} of $f_1$ against $f_2$, see \cite[Lemma 2.1]{revuz2013continuous}) in terms of last passage values. It is easy to see that (see for example \cite[Section 2.1]{sarkar2021brownian}) for all \(t\in[0,\infty)\),
    \[
    Wf_1(t) = \displaystyle \max_{i=1,2}\{f_i(0) + f [(0, i) \to (t, 1)]\} \,.
    \]

For continuous functions $f_1, \ldots, f_n$, starting with $f_n$, reflecting $f_{n-1}$ off of $f_n$ to give $W(f_{n-1, f_n})_1$ and so on gives at the final stage
\[
\max_{1\le i \le n}\{f_i(0) + f [(0, i) \to (t, 1)]\,,t\ge 0\,.
\]
The values $f_i(0)$ will also be called the \emph{boundary data}. For more details, see the construction involving \emph{inhomogeneous Brownian last passage values} \cite[Section 2]{tassopoulos2025inhomogeneousbrownian}.

\subsection{Brownian bridge properties}\label{sec: bbridge}

Here we put together a few standard facts and basic lemmas on Brownian bridges, that will be needed in the later sections.

We will make frequent use of the following standard lemma (stated informally below) comparing a Brownian bridge away from its right endpoint to a Brownian motion. For a more precise statement and proof, please see \cite[Lemma 3.9]{tassopoulos2025quantitativebrownianregularitykpz}.

\begin{lemma}\label{lemma: bb comparison lemma}
    Fix $0<x<y$, $m\in \N$ and let $W(\cdot)$ be an $m$-dimensional Brownian bridge on $[0, y]$ with endpoints $\underline{0}, \underline{a}\in \R^m$. Then the law of $W(\cdot)$ restricted to $[0,x]$ is mutually absolutely continuous with respect to that of an $m$-dimensional Brownian motion on $[0, x]$ starting from $(0, \underline{0})$.
\end{lemma}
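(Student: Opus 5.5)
The plan is to compute the Radon--Nikodym derivative explicitly and check that it is strictly positive and finite almost surely. Mutual absolute continuity of two laws on $\mathscr{C}([0,x];\R^m)$ is equivalent to the existence of a density that is a.s.\ finite and a.s.\ strictly positive under the reference measure. Since the coordinates of an $m$-dimensional Brownian bridge (with independent coordinates) are independent one-dimensional bridges, and products of positive densities are positive densities, it suffices to treat $m=1$. The same argument works verbatim in $\R^m$ with the Gaussian kernel $p_s(z)=(4\pi s)^{-m/2}e^{-|z|^2/(4s)}$ (rate two).

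Let $p_s(z)$ denote the rate-two heat kernel. The first step is the Markov property of the Brownian motion $B$ started at $0$. For any bounded measurable functional $F$ of the path on $[0,x]$, disintegrating the Brownian motion on $[0,y]$ at its endpoint value gives
\[
\mathbb{E}\big[F(W|_{[0,x]})\big]=\mathbb{E}\Big[F(B|_{[0,x]})\,\frac{p_{y-x}(\underline{a}-B(x))}{p_y(\underline{a})}\Big].
\]
This identity holds for every endpoint $\underline a$ if one works with the continuous version of the regular conditional law given $B(y)=\underline a$. That version is precisely the Brownian bridge, and continuity in $\underline a$ removes the usual a.e.\ ambiguity. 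Equivalently, one can verify the formula on finite-dimensional cylinder sets $\{W(t_1)\in A_1,\dots,W(t_k)\in A_k\}$ with $t_k\le x$. There it is just the bridge finite-dimensional density, namely the Gaussian transition densities times $p_{y-t_k}(\underline a - z_k)/p_y(\underline a)$, integrated using Chapman--Kolmogorov on $[t_k,x]\cup[x,y]$. A monotone class argument then extends it to all of $\mathscr{C}([0,x])$.

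The density is therefore
\[
D(\omega)=\frac{p_{y-x}(\underline a-\omega(x))}{p_y(\underline a)}.
\]
Because $x<y$, this is a continuous function of $\omega(x)$ that is strictly positive and finite everywhere. This gives $\mathrm{Law}(W|_{[0,x]})\ll\mu$. For the converse, if $\mathrm{Law}(W|_{[0,x]})(A)=0$ then $\mathbb{E}[\mathbf 1_A(B)D(B)]=0$, and since $D>0$ pointwise this forces $\mu(A)=0$. So the two laws are mutually absolutely continuous, with density $1/D$ in the reverse direction.

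The only delicate point is the first step, identifying the bridge law with the conditional law via the cylinder-set computation. The hypothesis $x<y$ is essential there: as $x\uparrow y$ the kernel $p_{y-x}$ degenerates, and the bridge becomes singular with respect to Brownian motion on $[0,y]$.
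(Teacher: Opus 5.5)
Your proof is correct. The explicit density $D(\omega)=p_{y-x}(\underline a-\omega(x))/p_y(\underline a)$ is verified on cylinder sets via Chapman--Kolmogorov and extended by a monotone class argument. It is strictly positive and finite because $x<y$, which gives both directions of absolute continuity.

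The paper gives no proof of this lemma and simply defers to \cite[Lemma 3.9]{tassopoulos2025quantitativebrownianregularitykpz}, so there is no in-paper route to compare against. Your self-contained Doob $h$-transform computation is the standard argument for this statement. It also yields slightly more: the bridge-to-Brownian density is uniformly bounded by $(y/(y-x))^{m/2}e^{|\underline a|^2/(4y)}$.
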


We record a standard result about the topological support of Brownian bridges on path space.

\begin{lemma}\label{lemma: support bb}
   Let $f: [0,1] \to \R$ be a continuous function with $f(0) = f(1) = 0$. Then with $W$ a two-sided rate two Brownian bridge vanishing at both endpoints, and any open set $U$ (with respect to the topology of uniform convergence on $[0,1]$) that contains $f$, 
    \[
    \PP(W(\cdot) \in U) > 0\,.
    \]
\end{lemma}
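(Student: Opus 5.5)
The plan is to reduce the statement to the full support of Wiener measure, through the Brownian bridge representation and a small-ball estimate. Fix $f\in\mathscr{C}([0,1];\R)$ with $f(0)=f(1)=0$ and an open set $U\ni f$. Choose $\varepsilon>0$ so that the uniform ball $B(f,\varepsilon)=\{g:\sup_{[0,1]}|g-f|<\varepsilon\}$ is contained in $U$. It then suffices to show $\PP(\sup_{[0,1]}|W-f|<\varepsilon)>0$.

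\emph{Step 1: approximate $f$ by a smooth function with the same boundary values.} Take $h\in \mathscr{C}^1([0,1])$ (for instance a polynomial, after correcting the endpoint values with a linear term) such that $h(0)=h(1)=0$ and $\sup|f-h|<\varepsilon/2$. Then $B(h,\varepsilon/2)\subset B(f,\varepsilon)$, so it is enough to prove $\PP(\sup|W-h|<\varepsilon/2)>0$.

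\emph{Step 2: shift by $h$ via Cameron--Martin.} Write $W(t)=B(t)-tB(1)$, where $B$ is a rate two Brownian motion. Since $h$ vanishes at both endpoints, $h$ lies in the Cameron--Martin space of the bridge, namely the absolutely continuous functions with $h'\in L^2$ and $h(0)=h(1)=0$. The Cameron--Martin theorem for the bridge (a centred Gaussian measure) therefore says that the law of $W-h$ is mutually absolutely continuous with respect to the law of $W$. Alternatively, one can shift $B$ by $h$ using Girsanov and observe that $B(\cdot)-h(\cdot)-t(B(1)-h(1))=W-h$. It follows that
\[
\PP\bigl(\sup|W-h|<\varepsilon/2\bigr)>0 \quad\text{if and only if}\quad \PP\bigl(\sup|W|<\varepsilon/2\bigr)>0.
\]

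\emph{Step 3: small-ball estimate at zero.} We have $\sup|W|\le 2\sup_{[0,1]}|B|$, and $\PP(\sup_{[0,1]}|B|<\varepsilon/4)>0$. This last fact holds for example by the explicit series for the exit time of Brownian motion from an interval. An elementary alternative is to discretise, using independent increments and continuity to bound oscillations on small subintervals. Combining Steps 1--3 gives the claim.

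The only mildly delicate point is Step 2: one has to check that the Cameron--Martin shift is legitimate for the bridge. The reason it works is that $h$ respects the pinning $h(1)=0$. If one prefers to avoid the bridge version of Cameron--Martin, the route I would take first is Girsanov for $B$ with drift $h'$, followed by the deterministic map $g\mapsto g-\cdot\, g(1)$, which carries $B$ to $W$. This map is $2$-Lipschitz in the uniform norm and sends $h$ to itself.
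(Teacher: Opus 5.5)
Your proof is correct and complete. The paper gives no argument for this lemma. It records it as a standard fact and then combines it with Lemma~\ref{lemma: bb comparison lemma} to get the analogous support statement for Brownian motion.

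Your route runs in the opposite direction. You start from the small-ball positivity $\PP(\sup_{[0,1]}|B|<\delta)>0$ for Brownian motion and transfer it to the bridge through a Cameron--Martin shift. There is therefore no circularity with how the paper uses the lemma.

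The delicate points are handled correctly:
\begin{itemize}
\item The approximant $h$ vanishes at both endpoints. This puts $h$ in the Cameron--Martin space of the bridge, or equivalently gives $\Phi(B-h)=W-h$ for $\Phi(g)=g-\cdot\,g(1)$.
\item You invoke mutual, not one-sided, absolute continuity between the laws of $W-h$ and $W$. The direction actually needed is $\mathrm{Law}(W)\ll\mathrm{Law}(W-h)$. Both your bridge Cameron--Martin argument and your Girsanov-plus-pushforward argument supply it, since pushforward under a measurable map preserves absolute continuity.
\end{itemize}

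A proof built from the paper's own tools would be clumsier. For example, applying Lemma~\ref{lemma: bb comparison lemma} on $[0,1-\delta]$ would still need a separate conditional argument near the pinned endpoint. The shift by $h$ with $h(1)=0$ handles the pinning in one step.
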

\begin{remark}
    In conjunction with Lemma \ref{lemma: bb comparison lemma}, Lemma \ref{lemma: support bb} yields the corresponding result for Brownian motion.
\end{remark}

We now state informally a decomposition result for Brownian bridges, which is similar in spirit to the L\'{e}vy-Ciesielski construction of Brownian motion.

\begin{lemma}[Lemma 2.8 in \cite{corwin2014brownian}]\label{lemma: bb decomp}
Fix $j\in \N$, $T>0$ and consider a sequence of times $0=t_0<t_1<\cdots < t_j=T$. There exists a sequence of independent centered Gaussian random variables $\{N_i\}_{i=1}^{j-1}$, interpolation functions $I_i$, $1\le i \le j$ and a sequence of independent Brownian bridges $\{B_i\}_{i=1}^j$ such that $B_i:[0,t_i-t_{i-1}]\rightarrow \R$ vanishes at both endpoints such that the random function $B:[0,T]\rightarrow \R$,
\begin{equation*}
B(s) = \sum_{i=1}^{m(s)+1} I_i(s) +   B_{m(s)+1}(s-t_{m(s)})\, ,
\end{equation*}
with $m(s) = \max \big\{i:t_i<s \big\}$
is equal in law to a Brownian bridge $B'$ on $[0,T]$ with arbitrary endpoints $B(0)$ and $B(T)$.
\end{lemma}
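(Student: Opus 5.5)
The statement to prove is Lemma~\ref{lemma: bb decomp}: a Brownian bridge on $[0,T]$ decomposes along a partition $0=t_0<\dots<t_j=T$ into independent Gaussian values at the partition points, linear interpolations between them, and independent bridges on the subintervals. The plan is to build $B$ explicitly and check that its law is that of a Brownian bridge $B'$ with the prescribed endpoints. Both are Gaussian processes, so it suffices to match the mean and covariance functions.

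\textbf{Step 1: the values at the partition points.} The vector $(B'(t_1),\dots,B'(t_{j-1}))$ is Gaussian, and by the Markov property of the bridge it forms a Gaussian Markov chain with explicit one-step transition densities. Write its conditional means as deterministic linear functions of $B'(0)$, $B'(T)$ and the previous value. Centering these conditional increments gives independent centred Gaussians $N_1,\dots,N_{j-1}$, with variances given by bridge formulas such as $2\,(t_i-t_{i-1})(T-t_i)/(T-t_{i-1})$ for rate two. Conversely, each $B'(t_i)$ is recovered as an explicit linear combination of $N_1,\dots,N_i$ and the endpoints.

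\textbf{Step 2: the interpolation functions.} On $[t_{i-1},t_i]$, let $I_i$ be the piecewise linear function interpolating the values $B'(t_{i-1})$ and $B'(t_i)$ obtained in Step 1. The telescoping way these are summed in the formula, with index up to $m(s)+1$, is only a bookkeeping convention: each $I_i$ is supported so that the sum equals the linear interpolation on the current interval. These functions are deterministic linear functionals of $(N_i)$ and the endpoints.

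\textbf{Step 3: the bridges between grid points.} Take independent rate-two Brownian bridges $B_i$ on $[0,t_i-t_{i-1}]$ vanishing at both ends, independent of $(N_i)$. By the Markov property of Brownian motion and the standard fact that a Brownian bridge conditioned on its grid values consists of independent bridges between consecutive grid points, the conditional law of $B'$ given $(B'(t_i))_i$ is exactly the interpolation plus independent pinned bridges. Hence $B$ and $B'$ have the same finite-dimensional distributions and, being continuous, the same law on $\mathscr{C}([0,T])$.

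The only point needing care is the conditional-independence claim in Step 3. I would verify it either directly through the factorisation of the finite-dimensional Gaussian densities (Chapman--Kolmogorov with transition kernels), or by checking that the covariance of $B$ equals $2\,(s\wedge u)(T-s\vee u)/T$, which is a routine computation.
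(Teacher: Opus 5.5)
Your proposal is correct. The paper gives no proof of this lemma: it is quoted informally from Corwin--Hammond, so there is no internal argument to compare against. Your route is the standard argument behind such decompositions: treat the grid values as a Gaussian Markov chain with independent innovations $N_i$, then fill in independent pinned bridges between grid points.

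Two remarks on tightening it.

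First, the ``standard fact'' you invoke in Step 3 is essentially the lemma itself, so the covariance check you defer is the actual proof. It does go through. Build $B$ from freshly sampled independent $N_i$ with $\mathrm{Var}(N_i)=2\frac{(t_i-t_{i-1})(T-t_i)}{T-t_{i-1}}$, not from $B'$. For zero endpoints your recursion gives $X_k=\sum_{i\le k}N_i\frac{T-t_k}{T-t_i}$. The telescoping identity $\frac{t_i-t_{i-1}}{(T-t_i)(T-t_{i-1})}=\frac{1}{T-t_i}-\frac{1}{T-t_{i-1}}$ then yields $\mathrm{Cov}(X_k,X_l)=2t_k(T-t_l)/T$ for $k\le l$. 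The remaining cases are as follows:
\begin{itemize}
\item For $s,u$ in different subintervals, the target $2s(T-u)/T$ is bi-affine, so linear interpolation reproduces it exactly, and the independent sub-bridges contribute no cross-covariance.
\item For $t_m\le s\le u\le t_{m+1}$, the interpolated covariance falls short of $2s(T-u)/T$ by exactly $2(s-t_m)(t_{m+1}-u)/(t_{m+1}-t_m)$. This is precisely the covariance of the independent sub-bridge on that interval.
\end{itemize}
Means are handled by adding the line through the endpoints.

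Second, your $I_i$ are supported on $[t_{i-1},t_i]$, so the sum $\sum_{i\le m(s)+1}I_i(s)$ collapses to a single term. That is admissible for the informal existential statement. However, the form of that sum is designed for interpolation functions each driven by a single $N_i$: $I_i=0$ on $[0,t_{i-1}]$, rising linearly to $N_i$ at $t_i$, and $I_i(s)=N_i\frac{T-s}{T-t_i}$ on $[t_i,T]$, plus the line through the endpoints. Your formula for $X_k$ shows immediately that these sum to your piecewise-linear interpolation, so the two readings agree.
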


We end this subsection with a key monotonicity lemma for Brownian bridges.

\begin{lemma}(Monotonic coupling)\label{lemma: bridge monotonicity}
Let $[s, t], J$ be closed intervals in $\R$ with $J \subseteq [s, t]$, let $\underline{x}^1 \le \underline{x}^2, \underline{y}^1 \le \underline{y}^2 \in \R^k_>$ where $\le$ is the coordinate-wise partial order, and let $g_1, g_2$ be two bounded Borel measurable functions from $[s,t] \to \R \cup \{-\infty\}$ such that $g_1(x)\le g_2(x)$ for all $x \in [s,t]$. For $i = 1, 2$, let $B^i$ be a $k$-tuple of Brownian bridges from $(s, \underline{x}^i)$ to $(t, \underline{y}^i)$, conditioned to avoid each other and $g_i$. Then there exists a coupling such that $B^1_j(r) \le B^2_j(r)$ for all $r \in [s, t], j \in \llbracket 1, k\rrbracket$.
\end{lemma}

For a sketch of a proof, see the proof of Lemmas $2.6$ and $2.7$ in \cite{corwin2014brownian}. For a more complete argument, see the proof of Lemma $2.15$ in \cite{dimitrov2021characterisation}. 

\subsection{Airy line ensemble and the Brownian Gibbs property}\label{subsec: Airy line ensemble}
The Airy line ensemble is a non-intersecting random
sequence of continuous functions \(\mathcal{A} = (\mathcal{A}_1, \mathcal{A}_2, \dots)\) (see Theorem~ 2.1 in \cite{DOV}), such that $\mathcal{A}_1 > \mathcal{A}_2 > \cdots$. It was introduced by Pr\"{a}hofer and Spohn \cite{prahofer2002scale} in the version \((\mathcal{A}^{\mathrm{stat}}_i)_{i\in \N}\stackrel{\mathrm{def}}{=}(\mathcal{A}_i(\cdot)+(\cdot)^2)_{i\in \N}\), which is stationary
in time, see also \cite{corwin2014brownian} and \cite{corwin2014ergodicityairylineensemble}. We will thus call it the \emph{stationary Airy line ensemble}. The top line $\mathcal{A}_1$ is known as the parabolic $\text{Airy}_2$ process that appears as the
limiting spatial fluctuation of random growth models starting from a single point.

We now recall the Brownian Gibbs resampling property enjoyed by the Airy line ensemble, first established in \cite{corwin2014brownian}. Informally, it states that for $a< b$, $k\in \N$, the law of the Airy line ensemble restricted to $\{1,2,\cdots,k\}\times(a,b)$, ${\mathcal{A}}|_{\{1,2,\cdots,k\}\times(a,b)}$, conditionally on all the data generated by the Airy line ensemble outside of this region, $\mathscr{F}^\mathrm{ext}_{\llbracket 1,k\rrbracket \times (a,b)}\equiv \sigma(\{\mathcal{A}_i(x): (i,x)\notin \llbracket 1,k\rrbracket \times (a,b)\})$, is given by non-intersecting Brownian bridges with entry data $\underline{x} = (\mathcal{A}_i(a))_{1\leq i\leq k}$, $\underline{y} = (\mathcal{A}_i(b))_{1\leq i\leq k}$ and also conditioned to stay above $f = \mathcal{A}_{k+1}$ on $(a,b)$. For the precise statement, see \cite[Section 2]{tassopoulos2025quantitativebrownianregularitykpz}.

We record the following theorem which shows that the stationary Airy line ensemble is \textit{ergodic}.

\begin{theorem}(\cite[Theorem~ 1.6]{corwin2014ergodicityairylineensemble})\label{thm: erg airy line ensemble}
    The stationary Airy line ensemble is ergodic with respect to horizontal shifts. 
\end{theorem}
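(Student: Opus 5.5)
The plan is to prove the stronger statement that the stationary Airy line ensemble $\mathcal{A}^{\mathrm{stat}}$ is \emph{strongly mixing} under the horizontal shift $\theta_T$, where $(\theta_T\mathcal{A}^{\mathrm{stat}})_i(\cdot)=\mathcal{A}^{\mathrm{stat}}_i(\cdot+T)$. Concretely, for all events $A,B$ in the product $\sigma$-algebra we aim to show $\PP(A\cap\theta_T^{-1}B)\to\PP(A)\PP(B)$ as $T\to\infty$. Mixing implies ergodicity: if $B$ is shift-invariant, take $A=B$ to get $\PP(B)=\PP(B)^2$.

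\emph{Reduction to a generating $\pi$-system.} The $\sigma$-algebra on line ensembles is generated by the $\pi$-system of cylinder events
\[
\{\#\{i:\mathcal{A}^{\mathrm{stat}}_i(t_j)>a_j\}\le k_j,\ j=1,\dots,n\}
\]
and finite intersections of these. This works because the lines are ordered and non-intersecting, so $\mathcal{A}^{\mathrm{stat}}_i(t)>a$ is equivalent to there being at least $i$ points above $a$ at time $t$.

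By a standard approximation argument, it suffices to check mixing on this $\pi$-system. The class of $B$ for which mixing holds against every fixed $A$ is closed under complements and under monotone limits, up to $\varepsilon$-approximation in $L^1(\PP)$. A Dynkin/$\pi$–$\lambda$ argument, combined with approximating arbitrary events by finite unions of cylinder events, then extends mixing to the full $\sigma$-algebra.

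\emph{Determinantal structure.} The point configurations $\{\mathcal{A}^{\mathrm{stat}}_i(t)\}_i$ at times $t_1<\dots<t_n$ form a determinantal process with the extended Airy kernel
\[
K(s,x;t,y)=\begin{cases}\int_0^\infty e^{-\lambda(s-t)}\mathrm{Ai}(x+\lambda)\mathrm{Ai}(y+\lambda)\,\diff\lambda, & s\ge t,\\[2pt] -\int_{-\infty}^0 e^{-\lambda(s-t)}\mathrm{Ai}(x+\lambda)\mathrm{Ai}(y+\lambda)\,\diff\lambda, & s<t.\end{cases}
\]
For the events above, $\PP(A\cap\theta_T^{-1}B)$ is a finite combination of joint generating functions. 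Each of these is a Fredholm determinant $\det(I-\chi K\chi)$ on $L^2(\{t_1,\dots,t_n,t'_1+T,\dots,t'_m+T\}\times\R)$, with $\chi$ a multiplication operator supported on half-lines $(a_j,\infty)$ and carrying the generating-function weights.

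Write the operator in block form with respect to the two groups of times. The diagonal blocks reproduce exactly the kernels for $A$ and for $B$, since the kernel depends only on time differences. So if the off-diagonal blocks tend to $0$ in trace norm, continuity of the Fredholm determinant in trace norm gives
\[
\det(I-\chi K\chi)\to\det(I-\chi K_A\chi)\det(I-\chi K_B\chi),
\]
which is the factorisation we need.

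\emph{Main obstacle: trace-norm decay of the off-diagonal blocks.} Each off-diagonal block is a product of two Hilbert–Schmidt operators of the form $\mathrm{Ai}(x+\lambda)$ composed with a time weight, so trace norm is controlled by Hilbert–Schmidt norms.

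For the forward block, with time gap $\ge T$ and $x,y\ge a$, the factor $e^{-\lambda T}$ on $\lambda>0$, together with the superexponential decay of $\mathrm{Ai}$ on $[a,\infty)$, gives decay as $T\to\infty$.

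The backward block is the delicate one. The integral over $\lambda<0$ carries the factor $e^{-\lambda(s-t)}$ with $s-t\le -T$, so this weight is $e^{-|\lambda|\,|s-t|}\le e^{-|\lambda|T}$. The Airy function is only oscillatory and bounded by $C(1+|\lambda|)^{-1/4}$ there, not decaying. I would first conjugate by $e^{c x}$ on each time slice, which leaves the determinant unchanged, to make all blocks Hilbert–Schmidt on the half-lines. I would then estimate the Hilbert–Schmidt norm by splitting into $\lambda\in[-1,0]$ and $\lambda<-1$, and show it is $O(T^{-1/2})$ using $\int_{-\infty}^0 e^{-2|\lambda|T}\,\diff\lambda=O(T^{-1})$. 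Handling this conjugation uniformly in $T$ is the step I expect to require the most care.
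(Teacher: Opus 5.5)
The paper gives no proof of this statement; it imports it from Corwin--Sun. Your argument is correct, and to my recollection it follows essentially the same determinantal route as that reference: you prove strong mixing by showing that the extended-Airy-kernel Fredholm determinants for counting events factorise as $T\to\infty$, because the off-diagonal time blocks go to zero in trace norm.

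Two small points need tidying. First, $\PP(N_j\le k_j,\ldots)$ is not a finite combination of generating-function values, so you should conclude via the continuity theorem for multivariate probability generating functions, which applies because your trace-norm bound is uniform in the weights. Second, the conjugation is unnecessary: for $\lambda<0$ the bound should be in terms of $|x+\lambda|$ rather than $|\lambda|$, and one has $\int_a^\infty \mathrm{Ai}(x+\lambda)^2\,\diff x=O(1+|\lambda|^{1/2})$, which already makes the Hilbert--Schmidt factors $\mathrm{Ai}(x+\lambda)e^{\lambda\tau/2}$ of the backward block $O(\tau^{-1/2})$ on the half-lines.
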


We end this subsection with a statement regarding the strong comparison the top line of the Airy line ensemble, the parabolic Airy$_2$ process, enjoys against Brownian motion on compacts. 

\begin{theorem}[Theorem 1.1. in \cite{dauvergne2024wienerdensitiesairyline}]\label{thm: mut abs cont airy bm}
    Fix $J\subseteq \R$ a bounded interval. Then law $\nu$ of the increments of the parabolic Airy$_2$ process 
    $\mathcal{A}_1(\cdot+\inf J) - \mathcal{A}_1(\inf J)$
    on paths $\mathscr{C}_0([0, \sup J -\inf J] ;\R)$ is mutually absolutely continuous with respect to the law of $\mu$, a rate 2 Brownian motion on $[0, \sup J -\inf J ]$.
\end{theorem}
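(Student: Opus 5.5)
The plan is to prove the two directions separately, both from the Brownian Gibbs property of Subsection \ref{subsec: Airy line ensemble} applied only to the top curve. Fix $a<\inf J$ and $b>\sup J$. Conditionally on $\mathscr{F}^{\mathrm{ext}}_{\{1\}\times(a,b)}$, the law of $\mathcal{A}_1$ on $[a,b]$ is a rate two Brownian bridge from $(a,\mathcal{A}_1(a))$ to $(b,\mathcal{A}_1(b))$ conditioned on the event $\{B>\mathcal{A}_2 \text{ on } (a,b)\}$. This event has positive probability almost surely, since $\mathcal{A}_1(a)>\mathcal{A}_2(a)$, $\mathcal{A}_1(b)>\mathcal{A}_2(b)$ and $\mathcal{A}_2$ is continuous; by Lemma \ref{lemma: support bb}, a tube around a continuous separating path has positive bridge probability. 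So, conditionally, the law of $\mathcal{A}_1|_{[a,b]}$ has the density $\mathbf{1}\{B>\mathcal{A}_2\}/Z$ with respect to the free bridge, where $Z>0$ almost surely.

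\emph{Direction $\nu\ll\mu$.} Let $E\subseteq\mathscr{C}_0([0,|J|];\R)$ satisfy $\mu(E)=0$. Conditionally on the external data, the probability that the increments of $\mathcal{A}_1$ on $J$ lie in $E$ is at most $Z^{-1}$ times the probability that the increments on $J$ of the free bridge lie in $E$. I would argue that the latter vanishes. Split the free bridge at $\inf J$: by the Markov property, given its value at $\inf J$, its restriction to $[\inf J,b]$ is a bridge on an interval strictly longer than $J$. By Lemma \ref{lemma: bb comparison lemma}, the increments of that bridge on $J$ are mutually absolutely continuous with respect to $\mu$, so they lie in $E$ with probability zero. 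Averaging over the external data gives $\nu(E)=0$.

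\emph{Direction $\mu\ll\nu$.} This is the main point. Let $\mu(E)>0$. Choose $M$ so that $E_M=E\cap\{\|f\|_\infty<M\}$ satisfies $\mu(E_M)>0$, and work on the positive-probability event (it has probability one) that the external data are finite, with $K=\max_{[a,b]}\mathcal{A}_2<\infty$. Under the free bridge, decompose using the values $x=B(\inf J)$ and $y=B(\sup J)$, the increment path $f$ on $J$, and the two independent outer bridges on $[a,\inf J]$ and $[\sup J,b]$. By Lemma \ref{lemma: bb decomp} and Lemma \ref{lemma: bb comparison lemma}, the joint law of $(x,f)$ has a density with respect to $\mathrm{Leb}\otimes\mu$ that is strictly positive everywhere, being a Gaussian density times a strictly positive Radon–Nikodym factor. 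Hence the set $\{x>K+M,\ f\in E_M\}$ has positive free-bridge probability. On this set the middle piece automatically stays above $\mathcal{A}_2$, and the outer bridges connect $\mathcal{A}_1(a)>\mathcal{A}_2(a)$ to $x>K$, and $x+f(|J|)>K$ to $\mathcal{A}_1(b)>\mathcal{A}_2(b)$. Each outer bridge therefore avoids $\mathcal{A}_2$ with strictly positive conditional probability, again by Lemma \ref{lemma: support bb} applied to a continuous interpolating path lying above $\mathcal{A}_2$. Integrating this strictly positive function over a set of positive measure shows that the conditional probability that $\mathcal{A}_1$ avoids $\mathcal{A}_2$ and has increments in $E_M$ is positive. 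Dividing by $Z$ and taking expectations gives $\nu(E)>0$.

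The step I expect to be delicate is this last positivity argument. One has to check that the avoidance probability of the outer bridges is a strictly positive, jointly measurable function of $(x,y)$ and the external data, so that integrating it over a set of positive measure genuinely yields a positive quantity. One also has to make sure the density of $(x,f)$ is positive everywhere, not merely almost everywhere on some subset, which is where the mutual (rather than one-sided) form of Lemma \ref{lemma: bb comparison lemma} is needed.
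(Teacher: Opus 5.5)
Your proof is correct, and it takes a genuinely different route from the paper. The paper gives no argument for this statement; it imports it from Dauvergne--Vir\'ag \cite{dauvergne2024wienerdensitiesairyline}, a quantitative study of the Radon--Nikodym density of the recentred Airy line ensemble with respect to independent Brownian motions (including boundedness of that density), from which mutual absolute continuity follows because the density is strictly positive.

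You use only the Brownian Gibbs property for the top curve together with Lemmas \ref{lemma: bb comparison lemma} and \ref{lemma: support bb}:
\begin{itemize}
\item The direction $\nu\ll\mu$ is the classical Corwin--Hammond bound: the conditional density of the top curve with respect to the free bridge is at most $Z^{-1}$.
\item The converse rests on the observation that increments on $J$ do not see the level $x=B(\inf J)$. You can therefore lift the middle piece above $\max_{[a,b]}\mathcal{A}_2$ at no cost, and then reconnect it to the boundary values with outer bridges that avoid $\mathcal{A}_2$ with positive conditional probability.
\end{itemize}

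The two technical points you flag are harmless.
\begin{itemize}
\item Write each outer bridge as a fixed pinned bridge plus an affine interpolation. By Fubini, its avoidance probability is then a Borel function of $(x, x+f(|J|))$ and the external data.
\item For $(x,f)$ you only need $\mathrm{Leb}\otimes\mu\ll\mathrm{Law}(x,f)$. This is exactly what the mutual form of Lemma \ref{lemma: bb comparison lemma} gives, combined with the Gaussian law of $x$.
\end{itemize}

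What each route buys: yours is short, self-contained, and yields exactly the qualitative statement, but it gives no control on the size of the density, which the citation does.

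One further point. The paper also invokes the $k$-line version of the cited result, in Proposition \ref{prop: separation of initial data} and Lemma \ref{lemma: semi-inf geod jump time pos}. Your argument extends to that case. Apply the Gibbs property to the top $k$ curves, and place the $k$ middle pieces at levels pairwise separated by more than $2M$, all above $\max_{[a,b]}\mathcal{A}_{k+1}$. Then the multi-dimensional support lemma gives positive probability that the outer non-intersecting bridges reconnect.
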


\subsection{The Airy sheet and the directed landscape}\label{sec: airy sheet}
The standard Airy sheet $\mathcal{S}:\R^2\mapsto \R$ is a random continuous function defined in terms of the Airy line ensemble such that $\mathcal{S}(0,\cdot)=\mathcal{A}_1(\cdot)$, first constructed in \cite{DOV}. The Airy sheet of scale $s$ is defined by
\[\mathcal{S}_s(x,y)=\mathcal{S}(x/s^2,y/s^2)\,,\]
for any $s>0$. 

We collect some important properties of the Airy line ensemble and the Airy sheet, which will prove useful later. Recall that $\mathcal{A}$ is the parabolic Airy line ensemble and that $\mathcal{S}$ is the Airy sheet. 

In \cite{dauvergne2022scalinglimitlongestincreasing}, the Airy sheet was constructed as a deterministic function of the Airy sheet on the entire plane extending the coupling on the half-plane used in \cite{DOV}. As a by-product, the Airy sheet $\mathcal{S}(\cdot, \cdot) $ can be coupled with the (parabolic) Airy line ensemble $\mathcal{A}$ so that $\mathcal{S}(0,\cdot)=\mathcal{A}_1(\cdot)$ and almost surely for all $(x,y,z)\in \Q\setminus\{0\}\times \Q^2$, there exists a random integer $K_{x,y,z}$ such that for all $k\ge K_{x,y,z}$, (recalling the definition for last passage values, Definition \ref{def: last passage})
\begin{align}\label{eq: airy sheet coupling full space}
\mathcal{S}(x,z)-\mathcal{S}(x,y) &= \mathcal{A}[x_k\to (z,1)]-\mathcal{A}[x_k\to (y,1)]\,,\quad x > 0\nonumber\\
\mathcal{S}(x,z)-\mathcal{S}(x,y) &= \tilde{\mathcal{A}}[(-x)_k\to (-z,1)]-\tilde{\mathcal{A}}[(-x)_k\to (-y,1)]\,,\quad x < 0\,,
\end{align}
where $x_k=(-\sqrt{k/2x},k)$, $x> 0$ and $\tilde{\mathcal{A}}(\cdot) = \mathcal{A}(-\cdot)$.

Aside from this coupling, we will also need some global shape estimates for the Airy sheet. In particular, the Airy sheets satisfy almost sure pointwise bounds 
\begin{equation}\label{eq: airy shape bnds}
|\mathcal{S}(x,y)+(x-y)^2|\le \mathfrak{C}+c\log^{2/3}(2+|x|+|y|)\,,\qquad \text{ for all } x,y\in \R 
\end{equation}
for some universal constant $c>0$ and some $\mathfrak{C}$ satisfying $\mathbb{E}[a^{\mathfrak{C}^{3/2}}]<\infty$ for some $a>1$, \cite{sarkarthrehalves2022}. 

Finally, we recall some properties of the Airy sheet, from Section 9 in \cite{DOV} and Section 14 in \cite{dauvergne2022scalinglimitlongestincreasing}. More precisely, we have almost surely, that as a random continuous function in $\R^2$, the Airy sheet is translation invariant, that is, for any $c\in \R$, $\mathcal S(\cdot+c, \cdot+c)\stackrel{\mathrm{d}}{=} \mathcal  S(\cdot, \cdot)$ and  
\begin{equation}\label{eq: Airy sheet symmetry}
\mathcal{S}(x, y) \stackrel{\mathrm{d}}{=} \mathcal{S}(-y, -x)\,, \quad \mathcal{S}(x, y) \stackrel{\mathrm{d}}{=} \mathcal{S}(-x, -y) \quad \mathrm{and} \quad \mathcal{S}(x, y) \stackrel{\mathrm{d}}{=} \mathcal{S}(x, y + c) + 2c(y-x) + c^2.
\end{equation}
Moreover, almost surely for all $x\le x',\; y\le y' $
\begin{equation}\label{eq: Airy sheet monoton}
    \mathcal{S}(x,y)+\mathcal{S}(x',y')\ge \mathcal{S}(x,y')+\mathcal{S}(x',y).
\end{equation}

Since $\mathcal S(x,y+\cdot)\stackrel{\mathrm{d}}{=}\mathcal S(0,y-x+\cdot)=\mathcal A_1(y-x+\cdot)$ by the translation invariance and the coupling above,  we now have, by Theorem~ \ref{thm: erg airy line ensemble} and the pointwise ergodic theorem for all $x, y \in \R^2$ almost surely,
\begin{align}\label{eq: ergod airy sheet}
    \mathcal{S}(x, y)+(x-y)^2 &= \mathbb{E}[\mathcal{A}_1(0)]+\lim_{m\to \infty}\frac{1}{m}\sum_{m=1}^\infty (\mathcal{S}(x, y)-\mathcal{S}(x, y+m)+(x-y)^2 -(x-y-m)^2)\nonumber\\
    &= \mathbb{E}[\mathcal{A}_1(0)]+\lim_{m\to \infty}\frac{1}{m}\sum_{m=1}^\infty (\mathcal{S}(x, y)-\mathcal{S}(x, y-m)+(x-y)^2 -(x-y+m)^2)\,.
\end{align}

Now we introduce some geodesic geometry on the Airy line ensemble. For $x\leq y\in \R$ and $\ell\in \N$, we shall denote the rightmost geodesic between $(x,\ell)$ and $(y,1)$ in the Airy line ensemble $\mathcal{A}$ by $\pi[(x,\ell)\to y]$ (see Section 2 of \cite{sarkar2021brownian}). Next we recall the definition of infinite geodesics.

\begin{definition}\label{def: semi-inf geo} For any $x\in\R^+$ and $y\in \R$ with $x_k=(-\sqrt{k/2x},k)$, we define the geodesic $\pi[x\to y]$ as the almost sure pointwise limit of $\pi[x_k\to y]$ as $k\to \infty$, whenever the limit exists. We define the length of the geodesic $\pi[x\to y]$ as $\mathcal{S}(x,y)$. 
\end{definition}
\begin{remark}
    The fact that these limits exist almost surely for all $x,y$ in a countable dense set of $\R^+\times \R^2$ is the content of \cite[Lemma 3.4]{sarkar2021brownian}.
\end{remark}

More generally, the directed landscape $\mathcal L: \R^4_\uparrow=\{(x,s;y,t)\in \R^4: s<t\}\mapsto\R$ is a random continuous function satisfying
the metric composition law
\begin{equation}\label{eq: metric comp}
    \mathcal{L}(x,r;y,t)=\sup_{z\in \R}(\mathcal{L}(x,r;z,s)+\mathcal{L}(z,s;y,t))\,,
\end{equation}
for all $(x,r,y,t)\in \R^4_\uparrow$ and all $s\in (r,t)$; and with the property that $\mathcal{L}(\cdot, t;\cdot, t + s^3)$ are independent Airy sheets of scale $s$ for any set of
disjoint time intervals $(t,t+s^3)$. The directed landscape $\mathcal L(x,s;y,t) $ can be thought of as a (random) metric between space-times points $(x, s)$ and $(y, t)$.

\subsection{The KPZ fixed point}

We briefly discuss the state space of admissible initial data for the KZP fixed point, namely the space of upper semicontinuous functions from $\mathbb{R}$ to $\mathbb{R}\cup\{\pm \infty\}$, $\mathrm{UC}$, (see Section 3 of \cite{quastelkpzfixedpoint2021} and the Appendix in \cite{virag2025actions} for details) with sub-parabolic growth at infinity. 

Next, we need an appropriate definition of `support' compatible with the `max-plus' nature of the directed landscape.

\begin{definition}(max-plus support)\label{def: max support}
    Let $f:\R\to \R\cup \{-\infty\}$ be a Borel function. We define the \emph{max-plus} support of $f$ to be the set
    \[
    \mathrm{supp}_{-\infty}(f) : = \{x\in \R: f(x)\neq -\infty\}\,.
    \]
\end{definition}

We now define the class of compactly supported upper-semicontinuous functions in the `max-plus' sense.

\begin{definition}\label{def: comp supp UC}
    Denote the class of compactly supported, upper-semi continuous functions on the line,
    \[
    \mathrm{UC}_c \stackrel{\mathrm{def}}{=}\{f \in \mathrm{UC}: \supp{f} \text{ is bounded }\}\,.
    \]
\end{definition}

Now, for $I\subseteq \R$, we introduce the following notation for the `restriction operator' acting on $f\in \mathrm{UC}$ by pointwise multiplication, truncating its `max-plus' support to $I$, (with the convention $0\cdot (- \infty) = -\infty)$)
\begin{equation}\label{eq: max-plus indicator}
    \delta_{I}(x) = \begin{cases}
    &1 \qquad \,\ \mbox{ for }x\in I\,,\\
    &-\infty\,\quad \mbox{ for } x\in  \R\setminus I\,.
    \end{cases}
\end{equation}

We now make explicit the parabolic growth condition for initial data in $\mathrm{UC}$. For $t>0$, recall the definition of \textit{$t$-finitary} initial data.
\begin{definition}($t$-finitary initial data)\label{def: finitary}
For $t> 0$, we denote by $\mathscr{I}_t$ the set of locally bounded upper semicontinuous $h_0\in \mathrm{UC}$ satisfying the sub-parabolic growth condition
    \[
    \displaystyle\lim_{|x|\to \infty}\frac{h_0(x) - x^2/t}{|x|} = -\infty\,.
    \]
\end{definition}

This condition on the initial data (for any $t>0$ fixed) is both necessary and sufficient to guarantee that the KPZ fixed point (at time $t>0$) does not explode, see \cite[Proposition 6.1]{sarkar2021brownian}.

Starting from admissible data $h_0\in \mathscr{I}_t$, $t>0$ the KPZ fixed point at time $t> 0$, $\mathfrak{h}_t(\cdot, h_0)$ (or $\mathfrak{h}(\cdot)$ when $t, h_0$ are clear from the context), can be expressed in terms of a variational formula involving the directed landscape $\mathcal{L}$:
\begin{equation}\label{eq: variational formula}
    \mathfrak{h}_t(y, h_0) = \max_{x\in \R}(h_0(x) + \mathcal{L}(x, 0; y,t))\,,\quad y\in \R\,.
\end{equation}

We denote the law of
$\mathfrak{h}_t(y, h_0)-\mathfrak{h}_t(a, h_0)\,, y\in [a, b]$
for $a< b$, supported on $\mathscr{C}_0([a, b]; \R)$, by $\nu^{h_0}$ (suppressing dependence on $a, b$ as it will always be clear from the context). 

Upper semicontinuous functions are nicely compatible with the variational formula of the KPZ fixed point. One can always replace the full variational formula \eqref{eq: variational formula} with a one over a fixed countable dense subset of the `max-plus' support of the initial data.

\section{Brownian mutual absolute continuity of the KPZ fixed point}\label{sec: mut abs cont BM}

In this section, we prove for initial data in $\mathscr{I}_t$ for some $t> 0$ (see Definition \ref{def: finitary}), one obtains mutual absolute continuity of the laws of the spatial increments of the KPZ fixed point against rate two Brownian motion on compacts. We crucially leverage the `full-space' coupling between the Airy sheet and Airy line ensemble on $\R^2$, \eqref{eq: airy sheet coupling full space}. Figure \ref{fig: flowchart mut abs cont} shows the key steps of the proof of the main mutual absolute continuity result, Theorem~ \ref{thm: mut abs cont finitary init data}.

We begin with a proposition regarding the monotonicity of some functionals of the Airy line ensemble and compactly supported initial data (appearing as boundary data in the variational characterisation of the KPZ fixed point; see the discussion in \cite[Section 5]{tassopoulos2025quantitativebrownianregularitykpz}).

First, recall from \cite[Theorem 3.7]{sarkar2021brownian} the notation the semi-infinite last passage values for $x> 0$, $\ell \ge 1$
\begin{equation}\label{eq: semi-inf lpp airy}
    \mathcal{A}[x\to (0, \ell)]\stackrel{\mathrm{def}}{=} \lim_{k\to \infty}(\mathcal{A}[x_k \to (0, \ell)]-\mathcal{A}[x_k \to (0, 1)])+\mathcal{S}(x, 0)\,,
\end{equation}
for $x_k = (-\sqrt{k/2x}, k)$, $k\ge 1$. By \cite[Lemma 3.8]{sarkar2021brownian}, they are non-decreasing in $\ell$ (hence finite) and by \cite[Lemma 3.9]{sarkar2021brownian} $\mathscr{F}_{-}\stackrel{\mathrm{def}}{=}\sigma(\{\mathcal{A}_i(x):x\leq 0,i=1,2,\cdots\})$-measurable. 

\begin{proposition}\label{prop: separation of initial data}
    Fix finitary initial data $h_0\in \mathrm{UC}_c$ with `max-plus' support $\supp{h_0}\subseteq (0,\infty)$ and denote the random `boundary data' as 
    \[
    G_\ell = \max_{x\in \R}(h_0(x) + \mathcal{A}[x\to (0, \ell)])\,.
    \]
    Then, almost surely, $G_m > G_{m+1}$ for $m\ge 1$. Moreover, if the initial data is compactly-supported such that $\supp{h_0}\subseteq (0,\beta)$ for some $\beta>0$, we obtain the almost sure uniform lower bounds for $m > m'$,
    \begin{align*}
    G_m-G_{m'} \ge \mathcal{A}[(\varepsilon^\infty_{\beta, m'}, m')\to (0, m)]-\mathcal{A}[(\varepsilon^\infty_{\beta, m'}, m')\to (0,m')] >0\,,
    \end{align*}
    where $\varepsilon^\infty_{\beta, m'}$ is the first time the semi-infinite geodesic $\pi[\beta \to (0, m')]$ reaches level $m'$.
\end{proposition}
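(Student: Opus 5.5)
The plan is to reduce the statement, via the metric composition law, to a deterministic comparison of finite last passage values started at a single point on line $m'$, and then use the strict ordering of the Airy lines. I read the indices as in the displayed lower bound: $G_\ell$ is strictly monotone in $\ell$, and the inequality to prove is $G_m-G_{m'}>0$ for $m>m'$. The claim ``$G_m>G_{m+1}$'' should be read with the same orientation as that bound. I would prove the bound first and deduce strict monotonicity from it, the case of support bounded only on one side coming last by an approximation.

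\textbf{Step 1 (maximiser and splitting).} Assume first $\supp{h_0}\subseteq(0,\beta)$. By \eqref{eq: semi-inf lpp airy} and continuity of $\mathcal S$, the map $x\mapsto \mathcal{A}[x\to(0,m')]$ is continuous. Since $h_0$ is upper semicontinuous with compact max-plus support, the maximum defining $G_{m'}$ is attained at some random $x^*\in(0,\beta)$, and then
\[
G_m-G_{m'}\ge \mathcal{A}[x^*\to(0,m)]-\mathcal{A}[x^*\to(0,m')] .
\]
Next, split the semi-infinite geodesic $\pi[x^*\to(0,m')]$ at the first time $\varepsilon_{x^*}$ at which it reaches level $m'$. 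I would use the metric composition law (Lemma~\ref{Lemma: Metric Composition}) at the level of the prelimiting values $\mathcal{A}[x_k\to\cdot\,]$, and pass to the limit $k\to\infty$ using the almost sure convergence of geodesics from \cite[Lemma 3.4]{sarkar2021brownian}. The geodesic equality for the $(0,m')$-endpoint, together with the composition inequality for the $(0,m)$-endpoint, should give
\[
\mathcal{A}[x^*\to(0,m)]-\mathcal{A}[x^*\to(0,m')]\ge \mathcal{A}[(\varepsilon_{x^*},m')\to(0,m)]-\mathcal{A}[(\varepsilon_{x^*},m')\to(0,m')] .
\]

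\textbf{Step 2 (replacing $x^*$ by $\beta$).} Because $x^*<\beta$, geodesic ordering (planarity of rightmost geodesics in the Airy line ensemble) places $\pi[x^*\to(0,m')]$ weakly to the left of $\pi[\beta\to(0,m')]$, so $\varepsilon_{x^*}\le\varepsilon^\infty_{\beta,m'}$. The function
\[
s\mapsto \mathcal{A}[(s,m')\to(0,m)]-\mathcal{A}[(s,m')\to(0,m')]
\]
is monotone in $s$ in the direction needed here. This follows from the quadrangle (crossing) inequality for last passage values, the analogue of \eqref{eq: Airy sheet monoton}, proved by the usual path-swapping argument. With this monotonicity we may replace $\varepsilon_{x^*}$ by $\varepsilon^\infty_{\beta,m'}$, which yields the displayed lower bound.

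\textbf{Step 3 (strict positivity).} It remains to show that the last passage difference started at $(\varepsilon^\infty_{\beta,m'},m')$ is strictly positive. Two ingredients are needed. First, $\varepsilon^\infty_{\beta,m'}<0$ almost surely; this is the non-degeneracy of geodesic jump times from the Appendix. Second, on an interval of positive length the Airy lines are strictly ordered, $\mathcal A_1>\mathcal A_2>\cdots$. Given a strictly negative start time, one can exhibit a path from $(\varepsilon^\infty_{\beta,m'},m')$ to the endpoint at level $m$ that gains a strictly positive amount over the best path to level $m'$, by rerouting through an adjacent line on a short terminal interval. To avoid measurability issues with the random start time, I would argue simultaneously for all rational start times $s<0$ and then use continuity in $s$. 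Finally, for general $h_0\in\mathrm{UC}_c$ with $\supp{h_0}\subseteq(0,\infty)$, the support is contained in some $(0,\beta)$, so the same argument gives strict monotonicity of $G$.

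\textbf{Main obstacle.} I expect Step 2 to be the hardest: justifying geodesic ordering and the quadrangle inequality for \emph{semi-infinite} geodesics, which requires control of the limits in \cite[Lemma 3.4]{sarkar2021brownian}, and getting the direction of monotonicity right. Closely tied to it is the non-degeneracy $\varepsilon^\infty_{\beta,m'}<0$ in Step 3. For both I would first establish everything for the finite approximants $x_k$, uniformly in $k\ge K$, and only then take limits.
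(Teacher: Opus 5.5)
Your Steps~1--2 follow the paper's skeleton: compare $G_m$ and $G_{m'}$ pointwise in $x$, split the geodesic to $(0,m')$ at its arrival on line $m'$, move the split point to the $\beta$-geodesic by ordering, then use $\varepsilon^\infty_{\beta,m'}<0$. However, you resolved the statement's index inconsistency the wrong way, and under your reading the argument breaks. Any path from $x_k$ to $(0,\ell+1)$ extends to a path ending at $(0,\ell)$ by jumping at time $0$, which adds nothing to its length. So $\mathcal{A}[x_k\to(0,\ell)]\ge\mathcal{A}[x_k\to(0,\ell+1)]$, and hence $\mathcal{A}[x\to(0,\ell)]$ and $G_\ell$ are non-increasing in $\ell$. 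Your target ``$G_m>G_{m'}$ for $m>m'$'' is therefore false. The claim $G_m>G_{m+1}$ is correct as written; it is the displayed bound that must be read with $m<m'$. Indeed, the paper proves $G_2\le G_1+\mathcal{A}[(\varepsilon^\infty_{\beta,2},2)\to(0,2)]-\mathcal{A}[(\varepsilon^\infty_{\beta,2},2)\to(0,1)]<G_1$. Concretely, your composition inequality $\mathcal{A}[x^*\to(0,m)]\ge\mathcal{A}[x^*\to(\varepsilon_{x^*},m')]+\mathcal{A}[(\varepsilon_{x^*},m')\to(0,m)]$ needs a path from line $m'$ to line $m$. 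Such a path exists only for $m\le m'$, since paths are non-increasing in the line index. Once you swap the roles, Steps~1--2 coincide with the paper's chain. The paper works at the prelimit level, for each $x$ in a countable dense subset of the support. There $\varepsilon^k_{x,2}\le\varepsilon^k_{\beta,2}$ puts the geodesic from $x_k$ on line $2$ at time $\varepsilon^k_{\beta,2}$, so the paper splits there directly, then lets $k\to\infty$ and maximises over $x$. This avoids needing a maximiser $x^*$ and a semi-infinite geodesic from a random point. Your monotonicity in $s$ is also elementary, because $\mathcal{A}[(s,m')\to(0,m')]=\mathcal{A}_{m'}(0)-\mathcal{A}_{m'}(s)$; no quadrangle inequality is needed.

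Step~3 names the wrong second ingredient: strict ordering of the lines does not give strict positivity. Your rerouting path stays on line $m'$ until a time $u$, then jumps to line $m$; it gains exactly $D(0)-D(u)$ over staying on line $m'$, where $D=\mathcal{A}_m-\mathcal{A}_{m'}$. For $m'=m+1$ these are all the paths, so $\mathcal{A}[(s,m')\to(0,m)]-\mathcal{A}[(s,m')\to(0,m')]=D(0)-\min_{[s,0]}D$. This vanishes whenever $D$ is non-increasing on $[s,0]$, however strictly the lines are ordered. What you actually need is that $D$ almost surely takes values below $D(0)$ in every left neighbourhood of $0$, i.e.\ local Brownian roughness. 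The paper takes this from the Brownian absolute continuity of the Airy line ensemble \cite[Theorem 1.1]{dauvergne2024wienerdensitiesairyline}. It holds simultaneously for all rational $s<0$, and it transfers to the random time $\varepsilon^\infty_{\beta,m'}<0$ by monotonicity in $s$, not by continuity. Indeed, the difference is non-increasing in $s$, so its value at $\varepsilon^\infty_{\beta,m'}$ dominates its value at any rational $s\in(\varepsilon^\infty_{\beta,m'},0)$.
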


\begin{proof}
    We show the strict monotonicity for $m=1$. The other cases (and corresponding lower bounds) follow analogously. We can express almost surely from \cite[Proposition 5.1]{sarkar2021brownian},
    $G_1 = \max_{x\in \R}(h_0(x)+\mathcal{S}(x,0))$,
    and
    \[
    G_2 = \max_{x\in \R}(h_0(x)+\lim_{k\to \infty}(\mathcal{A}[x_k\to(0, 2)]-\mathcal{A}[x_k\to(0, 1)]+\mathcal{S}(x,0))\,,
    \]
    where $x_k = (-\sqrt{k/(2x)}, k)$, $x > 0$, $k\ge 1$.  

    By monotonicity of geodesics, we have  for any fixed $0<x<y$, $k\ge 1$ almost surely, $ \varepsilon^k_{x,2} \le \varepsilon^k_{y, 2}$, where for $x>0$, $2\le \ell\le k$, $\varepsilon^k_{x,\ell}$ is the first time the geodesic from $x_k$ to $(0,2)$ reaches level $\ell$. Note for any fixed $x > 0$, as $k\to \infty$, $\varepsilon^k_{x,\ell}$ eventually stabilises to the respective jump time of the semi-infinite geodesic on the Airy line ensemble.
    
    Now, by the metric composition law for last passage percolation, we have almost surely eventually in $k\ge 1$, for any $x\in \supp{h_0}\subseteq (0, \beta)$ (where we assume the support of $h_0$ is countable and $\beta = \sup \supp{h_0}<\infty$),
    \begin{align*}
    \mathcal{A}[x_k\to(0, 2)]-\mathcal{A}[x_k\to(0, 1)]&\le \mathcal{A}[x_k\to(0, 2)]-\mathcal{A}[x_k\to(\varepsilon^k_{\beta, 2}, 2)]-\mathcal{A}[(\varepsilon^k_{\beta, 2}, 2)\to (0, 1)]\\
    &=\mathcal{A}[x_k\to(\varepsilon^k_{\beta, 2}, 2)]+\mathcal{A}[(\varepsilon^k_{\beta, 2}, 2)\to (0,2)]\qquad \hfill (\varepsilon^k_{x, 2}\le \varepsilon^k_{\beta, 2})\\
    &-\mathcal{A}[x_k\to(\varepsilon^k_{\beta, 2}, 2)]-\mathcal{A}[(\varepsilon^k_{\beta, 2}, 2)\to (0, 1)]\,,
    \end{align*}
    where $\varepsilon^k_{\beta, 2}< 0$ almost surely, which follows from from the locally Brownian nature of the Airy line ensemble, see Lemma \ref{lemma: semi-inf geod jump time pos} in the Appendix.
    
    We thus have
    \begin{align*}
    \mathcal{A}[x_k\to(0, 2)]-\mathcal{A}[x_k\to(0, 1)]&\le\mathcal{A}[(\varepsilon^k_{\beta, 2}, 2)\to (0,2)]-\mathcal{A}[(\varepsilon^k_{\beta, 2}, 2)\to (0, 1)]\,.
    \end{align*}
    Taking $k\to \infty$ we obtain the almost sure bounds
    \begin{align*}
    G_2 &= \max_{x\in \R}(h_0(x)+\lim_{k\to \infty}(\mathcal{A}[x_k\to(0, 2)]-\mathcal{A}[x_k\to(0, 1)]+\mathcal{S}(x,0))\\
    &\le G_1 + \lim_{k\to \infty}(\mathcal{A}[(\varepsilon^k_{\beta, 2}, 2)\to (0,2)]-\mathcal{A}[(\varepsilon^k_{\beta, 2}, 2)\to (0, 1)])\\
    &= G_1 + \mathcal{A}[(\varepsilon^\infty_{\beta, 2}, 2)\to (0,2)]-\mathcal{A}[(\varepsilon^\infty_{\beta, 2}, 2)\to (0, 1)]< G_1\,,
    \end{align*}
    where the last strict inequality follows from the (mutual) absolute continuity of the centred Airy line ensemble with Brownian motion on compacts (cf. \cite[Theorem 1.1]{dauvergne2024wienerdensitiesairyline}, and the fact that the jump times $\varepsilon^k_{\beta, 2}$ converge as $k\to \infty$ to some $\varepsilon^\infty_{\beta, 2}< 0$ (recall from Section \ref{sec: airy sheet} and Lemma \ref{lemma: semi-inf geod jump time pos} again with $x = \varepsilon^\infty_{\beta, 2}, k = 1, \ell = 2$), which is in fact the first time the semi-infinite geodesic $\pi[\beta \to (0, 2)]$ reaches level $2$. 
\end{proof}

Using the positive gap between the first two values of the boundary data and continuity of the infinite Skorokhod reflections of lines in the Airy line ensemble (appropriately shifted by the boundary data, cf. Lemma \ref{lemma: infinite lpp cont at zero}), we obtain the following `local' result for compactly supported initial data (recall Definition \ref{def: comp supp UC}). More precisely, `locally', the increments of the KPZ fixed point look up to a random time horizon like those of the parabolic Airy$_2$ process.

\begin{proposition}\label{prop: local comparison KPZ random time}
    Fix initial data $h_0\in \mathscr{I}_t$ with $\inf \supp{h_0} > -\infty$. Then, for every $y\in \R$, there exists some random open interval $J_y\subseteq [y, \infty)$ (in the subspace topology of $[y, \infty)$) such that,
        \[
        \mathfrak{h}_t(\cdot, h_0)|_{J_y} - \mathfrak{h}_t(y, h_0) \stackrel{\mathrm{law}}{=} t^{\frac{1}{3}}(A(t^{-\frac{2}{3}}\;\cdot)|_{J'_y} - A(t^{-\frac{2}{3}}y)\,,
        \]
        where $A(\cdot)$ is the parabolic $\mathrm{Airy}_2$ process and $y\in J'_y\subseteq [y, \infty)$ is another random open interval depending on the law of the Airy line ensemble, which contains $y$ in its interior.
\end{proposition}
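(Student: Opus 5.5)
Using the KPZ scaling invariance of the directed landscape, $\mathcal{L}(x,0;y,t)\stackrel{\mathrm{d}}{=} t^{1/3}\mathcal{S}(t^{-2/3}x,t^{-2/3}y)$ jointly in $(x,y)$. This reduces the statement to $t=1$ with rescaled initial data, which is still in $\mathscr{I}_1$ and has max-plus support bounded below. By translation invariance of the Airy sheet, together with the shear symmetry in \eqref{eq: Airy sheet symmetry} (which changes $h_0$ only by a linear-plus-constant term and keeps it admissible), we may shift both $y$ and the support. After the shift, $\supp{h_0}\subseteq(0,\infty)$ and the base point is $y=0$. The plan is to represent $\mathfrak{h}_1(\cdot)-\mathfrak{h}_1(0)$ on $[0,\infty)$ as a Pitman/Skorokhod-type last passage functional of the Airy line ensemble restricted to $[0,\infty)$, with boundary data $(G_\ell)_{\ell\ge1}$ as in Proposition \ref{prop: separation of initial data}. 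We then show that near $0$ this functional coincides with $\mathcal{A}_1(\cdot)-\mathcal{A}_1(0)$ (with $\mathcal{A}_1$ the top line from the coupling \eqref{eq: airy sheet coupling full space}).

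\textbf{Step 1 (representation).} Using the coupling \eqref{eq: airy sheet coupling full space} with $x>0$ and the metric composition law (Lemma \ref{Lemma: Metric Composition}) at the vertical line $\{0\}$, we write for $z\ge0$, $x\in\supp{h_0}$ and large $k$
\[
\mathcal{A}[x_k\to(z,1)]=\max_{\ell\ge1}\big(\mathcal{A}[x_k\to(0,\ell)]+\mathcal{A}[(0,\ell)\to(z,1)]\big).
\]
Subtracting $\mathcal{A}[x_k\to(0,1)]$ and passing $k\to\infty$ via \eqref{eq: semi-inf lpp airy} gives
\[
\mathcal{S}(x,z)=\max_{\ell}\big(\mathcal{A}[x\to(0,\ell)]+\mathcal{A}[(0,\ell)\to(z,1)]\big).
\]
Maximizing over $x$ (over a countable dense subset of the support, as remarked after \eqref{eq: variational formula}) and exchanging the maxima yields
\[
\mathfrak{h}_1(z)=\max_{\ell\ge1}\big(G_\ell+\mathcal{A}[(0,\ell)\to(z,1)]\big),\qquad z\ge0,
\]
essentially as in \cite[Proposition 5.1]{sarkar2021brownian}. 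The limit exchange needs the stabilisation of geodesics for all $x$ simultaneously; we would handle this with the monotonicity of jump times used in the proof of Proposition \ref{prop: separation of initial data}. Since $\mathcal{A}[(0,\ell)\to(0,1)]=-\infty$ for $\ell\ge 2$, at $z=0$ we get $\mathfrak{h}_1(0)=G_1$.

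\textbf{Step 2 (localisation).} By Proposition \ref{prop: separation of initial data}, the gap $\delta:=G_1-\sup_{\ell\ge2}G_\ell$ must be strictly positive almost surely. The strict inequality $G_1>G_2$ together with $G_2\ge G_3\ge\cdots$ gives $\delta=G_1-G_2>0$. For each $\ell\ge2$, $\mathcal{A}[(0,\ell)\to(z,1)]-\mathcal{A}_1(z)+\mathcal{A}_1(0)$ is the excess of the passage value over the top-line increment. We need
\[
\sup_{\ell\ge2}\big(G_\ell-G_1+\mathcal{A}[(0,\ell)\to(z,1)]-(\mathcal{A}_1(z)-\mathcal{A}_1(0))\big)<0
\]
for $z\in[0,\tau)$, with $\tau>0$ random. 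This is exactly the continuity at zero of the infinite Skorokhod reflection (Lemma \ref{lemma: infinite lpp cont at zero}). Concretely, $\max_{\ell\ge2}(G_\ell-G_2+\mathcal{A}[(0,\ell)\to(z,1)])$ should tend to $\mathcal{A}_1(0)$ as $z\downarrow0$, while the $\ell=1$ term equals $G_1+\mathcal{A}_1(z)-\mathcal{A}_1(0)$, which stays within $\delta/2$ of $G_1$ for $z$ small. Then on $J=[0,\tau)$ we get $\mathfrak{h}_1(z)-\mathfrak{h}_1(0)=\mathcal{A}_1(z)-\mathcal{A}_1(0)$ pathwise. Undoing the scaling and shifts gives $J_y$ and $J'_y$, with the equality in law holding jointly with the random intervals.

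\textbf{Main obstacle.} I expect the hardest part to be the uniform-in-$\ell$ control in Step 2. Infinitely many levels $\ell$ contribute, and $\mathcal{A}[(0,\ell)\to(z,1)]$ for large $\ell$ involves many lines squeezed into a short interval. I would first try to bound this using $G_\ell\le G_2$ (monotonicity in $\ell$) and the parabolic decay of $\mathcal{A}_\ell(0)$. The tail $\ell\ge L$ should be dominated by $\mathcal{A}_1(z)$ minus a large gap, which is exactly the content deferred to Lemma \ref{lemma: infinite lpp cont at zero}. A secondary technical point is ensuring all limits in Step 1 hold simultaneously for the uncountably many $x$ via upper semicontinuity.
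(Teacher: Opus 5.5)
Your outline matches the paper's. You reduce to $t=1$, write $\mathfrak{h}_1$ to the right of the base point as $\max_{\ell}(G_\ell+\mathcal{A}[(0,\ell)\to(\cdot,1)])$, and combine the strict gap $G_1>G_2$ with a finite level cutoff and continuity at the base point, so that the $\ell=1$ term wins on a random interval. But Step 2 has a genuine gap.

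The hypothesis is only $\inf\supp{h_0}>-\infty$, so after your shift $\supp{h_0}\subseteq(0,\infty)$ may be unbounded above. Proposition~\ref{prop: separation of initial data} and Lemma~\ref{lemma: infinite lpp cont at zero}, however, are stated and proved only for bounded max-plus support. The first bounds $\mathcal{A}[x\to(0,2)]-\mathcal{A}[x\to(0,1)]$ uniformly in $x$ via the jump time $\varepsilon^\infty_{\beta,2}$ with $\beta=\sup\supp{h_0}<\infty$. The second gets its cutoff from $L^k_0\le\pi[x_0,Y](0)$ with $x_0=\sup\supp{h_0}$.

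This is not a technicality: no level cutoff can be uniform over an unbounded set of $x$. Suppose the maximum in $\mathcal{S}(x,z)=\max_\ell(\mathcal{A}[x\to(0,\ell)]+\mathcal{A}[(0,\ell)\to(z,1)])$ were attained at some $\ell\le L$ for all $x$. Since $\mathcal{A}[x\to(0,\ell)]\le\mathcal{A}[x\to(0,1)]=\mathcal{S}(x,0)$, you would get $\mathcal{S}(x,z)-\mathcal{S}(x,0)\le\max_{\ell\le L}\mathcal{A}[(0,\ell)\to(z,1)]$ for all $x$. But \eqref{eq: airy shape bnds} gives $\mathcal{S}(x,z)-\mathcal{S}(x,0)\ge 2xz-z^2-2\mathfrak{C}-2c\log^{2/3}(2+|x|+|z|)\to\infty$ for fixed $z>0$.

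So as written you have neither $\delta>0$ nor the uniform-in-$\ell$ control you flag as the main obstacle. That control comes from geodesic geometry, not from the decay of $\mathcal{A}_\ell(0)$, since passage values are sums of increments. Notice that your argument never uses the sub-parabolic growth in $\mathscr{I}_t$; this is exactly where it is needed.

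The missing step is the localisation the paper performs first. By \eqref{eq: airy shape bnds} and the finitary condition, $h_0(x)+\mathcal{S}(x,z)\to-\infty$ as $x\to\infty$, uniformly for $z$ in a compact window. Hence there is an almost surely finite random $m^*$ with $\mathfrak{h}_1(z)=\max_{x\le m^*}(h_0(x)+\mathcal{S}(x,z))$ on that window; that is, $G_\ell$ may be replaced by the boundary data $G^{m^*}_\ell$ of $h_0\cdot\delta_{(0,m^*]}$. Now apply Proposition~\ref{prop: separation of initial data} and Lemma~\ref{lemma: infinite lpp cont at zero} to the deterministic data $h_0\cdot\delta_{(0,m]}$ for every $m\in\N$, and section on $\{m^*=m\}$. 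This gives $G^{m^*}_1>G^{m^*}_2$ and a finite random cutoff $L_0$, after which your Step 2 goes through.

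Two smaller corrections:
\begin{itemize}
\item Each $\mathcal{A}[(0,\ell)\to(z,1)]\to0$ as $z\downarrow0$, so $\max_{\ell\ge2}(G_\ell-G_2+\mathcal{A}[(0,\ell)\to(z,1)])$ tends to $0$, not to $\mathcal{A}_1(0)$. With your limit, the comparison with $G_1$ would not follow.
\item The shear you use to move the base point to $0$ adds the drift $2c(\cdot-y)$ to the increments. This is harmless only because $\mathcal{A}_1(\cdot+c)-\mathcal{A}_1(y+c)+2c(\cdot-y)$ has the law of $\mathcal{A}_1(\cdot)-\mathcal{A}_1(y)$, by stationarity of $\mathcal{A}_1(\cdot)+(\cdot)^2$. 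The paper avoids the shear altogether by composing at the vertical line through $y$.
\end{itemize}
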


\begin{proof}
        By $1:2:3$ scaling and translation symmetries of the Airy sheet, it suffices to take $t=1$, $\inf\supp{h_0} > 0$ and $y > 0$. We can now represent using the coupling \eqref{eq: airy sheet coupling full space} (cf. \cite[Proposition 5.1]{sarkar2021brownian})
        \[
        \mathfrak{h}(s) = \mathfrak{h}_1(s, h_0) = \max_{\ell \ge 1}(G_\ell + \mathcal{A}[(y, \ell)\to (s, 1)])\,,\qquad s\ge y\,,
        \]
        with
        \[
        G_\ell \equiv \max_{x\in \R}\bigg(h_0(x)+\lim_{k\to \infty}(\mathcal{A}[x_k\to(y, \ell)]-\mathcal{A}[x_k\to(y, 1)])+\mathcal{S}(x,y)\bigg)\,,
        \]
        where $x_k = (-\sqrt{k/(2x)}, k)$, $x > 0$, $k\ge 1$.
        
        Now, by the shape estimates of the Airy sheet, \eqref{eq: airy shape bnds} (cf. \cite[Proposition 6.1]{sarkar2021brownian}), there exists a random $m^*\in \N$ such that almost surely, we can represent the KPZ fixed point $\mathfrak{h}$ on $[y, y+1]$ as
        \[
         \mathfrak{h}(s) = \max_{\ell \ge 1}(G^{m^*}_\ell + \mathcal{A}[(y, \ell)\to (s, 1)])\,,\qquad s\in [y, y+1]\,,
        \]
        where
        \[
        G^{m^*}_\ell = \max_{x\in [-m^*, m^*]}(h_0(x) + \mathcal{A}[x\to (y, \ell)])\,,\quad \ell \ge 1\,.
        \]
        By \cite[Proposition 5.1]{sarkar2021brownian}, there exists a random $L_0$ such that
        we can represent the KPZ fixed point $\mathfrak{h}$ on $[y, y+1]$ as
        \begin{align*}
             \mathfrak{h}(s) &= \max_{1\le \ell \le L_0}(G^{m^*}_\ell + \mathcal{A}[(y, \ell)\to (s, 1)])= G^{m^*}_1 + \mathcal{A}_1(s)-\mathcal{A}_1(y) \\
             &+ \max_{y\le r\le s}\bigg(\max_{2\le \ell \le L_0}(G^{m^*}_\ell + \mathcal{A}[(y, \ell)\to (r, 2)])-G^{m^*}_1 - \mathcal{A}_1(r)+\mathcal{A}_1(y)\bigg)\lor 0\,,\quad s\in [y, y+1]\,.
        \end{align*}
        By Proposition \ref{prop: separation of initial data}, we have $G^{m^*}_1 > G^{m^*}_2$ almost surely (by sectioning on events $\{m^* = m \}$, $m \ge 1$). Thus, since $L_0, m^*$ are uniform for $s\in [y, y+1]$, by continuity of last passage values and the monotonicity of the boundary data $G^{m^*}_\ell$ (cf. Proposition \ref{prop: separation of initial data}), we have
        \[
        \lim_{s\to y}\max_{2\le \ell \le L_0}(G^{m^*}_\ell + \mathcal{A}[(y, \ell)\to (s, 2)]) = G^{m^*}_2 < G^{m^*}_1\,.
        \]
        This means there exists a random neighbourhood $J_y$ such that
        \[
        \mathfrak{h}(\cdot)|_{J_y} - \mathfrak{h}(y) = \mathcal{A}(\cdot)|_{J_y} - \mathcal{A}(y)\,.
        \]
        Then, take $J'_y \stackrel{\mathrm{law}}{=} J_y \text{ conditioned on } \mathcal{A}$ to conclude the proof.
\end{proof}

In particular, inspecting the proof of Proposition \ref{prop: local comparison KPZ random time}, we obtain in the following proposition a coalescence result for the KPZ fixed point started from different initial data, see Figure \ref{fig: KPZ coal}. 

\begin{proposition}\label{prop: KPZ local coal inc}
    Fix $t> 0$ and $\alpha, y\in \R$. Then, there exists a coupling of
    \[
    (\mathfrak{h}_t(\cdot, h_0)\,: h_0 \in \mathscr{I}_t\,, \inf\supp{h_0}> -\alpha)
    \]
    such that almost surely, for any two $h_0, h'_0$ as above, there exists some random $\tau_y = \tau_y(h_0, h'_0) > y$ with
    \[
    \mathfrak{h}_t(\cdot, h_0)|_{[y, \tau_y]} - \mathfrak{h}_t(y, h_0) =  \mathfrak{h}_t(\cdot, h'_0)|_{[y, \tau_y]} - \mathfrak{h}_t(y, h'_0)\,,
    \]
    that is the increments of the KPZ fixed points eventually coalesce.
\end{proposition}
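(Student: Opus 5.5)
The plan is to reuse the single coupling from the proof of Proposition \ref{prop: local comparison KPZ random time}. By the $1:2:3$ scaling and translation invariance of the directed landscape, I reduce to $t=1$. Replacing $h_0$ by $h_0(\cdot-\alpha')$ and $y$ by $y+\alpha'$ for a suitable shift $\alpha'$, I may also assume $\inf\supp{h_0}>0$ for every $h_0$ in the family, with $y>0$. I then take one Airy sheet $\mathcal{S}$ coupled to one parabolic Airy line ensemble $\mathcal{A}$ via \eqref{eq: airy sheet coupling full space}, and define every $\mathfrak{h}_1(\cdot,h_0)$ through the variational formula \eqref{eq: variational formula} with this common $\mathcal{S}$. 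All the KPZ fixed points in the family are then deterministic functionals of the same $\mathcal{A}$, and this is the coupling.

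On the almost sure event where \eqref{eq: airy sheet coupling full space}, the shape bounds \eqref{eq: airy shape bnds} and the conclusions of \cite[Proposition 5.1]{sarkar2021brownian} hold, I use the representation from the proof of Proposition \ref{prop: local comparison KPZ random time}. For each $h_0$,
\[
\mathfrak{h}(s)=G_1(h_0)+\mathcal{A}_1(s)-\mathcal{A}_1(y)+\Big(\max_{y\le r\le s}\Big(\max_{2\le\ell\le L_0}\big(G_\ell(h_0)+\mathcal{A}[(y,\ell)\to(r,2)]\big)-G_1(h_0)-\mathcal{A}_1(r)+\mathcal{A}_1(y)\Big)\Big)\lor 0
\]
for $s\in[y,y+1]$, where $G_\ell(h_0)$ is the boundary data at base point $y$ and $L_0=L_0(h_0)$ is a random truncation level. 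Proposition \ref{prop: separation of initial data} gives $G_1(h_0)>G_2(h_0)$. Continuity of the finitely many last passage values at $r=y$ then gives a random $\tau_y(h_0)>y$ such that the reflection term vanishes on $[y,\tau_y(h_0)]$. On that interval, $\mathfrak{h}(\cdot)-\mathfrak{h}(y)=\mathcal{A}_1(\cdot)-\mathcal{A}_1(y)$. The key point is that the right-hand side does not depend on $h_0$.

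Given two data $h_0,h_0'$, I set $\tau_y(h_0,h_0'):=\min(\tau_y(h_0),\tau_y(h_0'))>y$. On $[y,\tau_y]$ both increment processes equal $\mathcal{A}_1(\cdot)-\mathcal{A}_1(y)$, so they agree, which proves the claim.

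The main obstacle is making the almost sure statement hold simultaneously for all (uncountably many) pairs $h_0,h_0'$. The quantifier order in the statement is "almost surely, for any two". I would handle this by choosing a single full-measure event depending only on $\mathcal{A}$ on which all the needed properties hold: existence of the limits defining $\mathcal{A}[x\to(y,\ell)]$ for all rational $x>0$ via \cite[Lemma 3.4]{sarkar2021brownian}, their extension to all real $x$ by monotonicity and continuity, the shape bound, and the strict inequality of Proposition \ref{prop: separation of initial data} for every $h_0$. For the last of these, I would use the uniform lower bound in Proposition \ref{prop: separation of initial data}. For $\supp{h_0}\subseteq(0,\beta)$ it bounds $G_1-G_2$ below by a quantity depending only on $\mathcal{A}$ and $\beta$. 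Taking $\beta$ over the integers gives countably many events, and their intersection has full measure. Similarly, $L_0$ and $m^*$ can be taken to depend on $h_0$ only through its support bound and growth, which are deterministic given $h_0$. So after intersecting countably many null-set complements, the argument above runs pathwise for every pair.
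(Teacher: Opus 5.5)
Your proposal is correct and follows the paper's own argument. The paper gives no separate proof: it obtains the proposition by inspecting the proof of Proposition \ref{prop: local comparison KPZ random time}. There, since $G_1>G_2$, the reflection term vanishes on a random right-neighbourhood of $y$, on which the increments of $\mathfrak{h}_t(\cdot,h_0)$ equal those of $\mathcal{A}_1$ whatever $h_0$ is, and one takes the smaller of the two neighbourhoods.

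Your treatment of simultaneity over uncountably many $h_0$ goes beyond what the paper addresses, and it needs one correction. The map $x\mapsto\mathcal{A}[x\to(y,\ell)]$ is monotone but, like a Busemann function, can jump in $x$, so it cannot be extended to irrational $x$ by continuity. A cleaner route is to combine the $\beta$-uniform gap from Proposition \ref{prop: separation of initial data} with the $x$-uniform truncation level from Lemma \ref{lemma: infinite lpp cont at zero}. This gives $\mathcal{S}(x,s)-\mathcal{S}(x,y)=\mathcal{A}_1(s)-\mathcal{A}_1(y)$ for $s\in[y,y+\tau_\beta]$ and all rational $x\in(0,\beta)$, and continuity of $\mathcal{S}$ then extends this to all real $x\in(0,\beta)$.
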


\begin{figure}[ht]
    \centering
    \includegraphics[width=0.6\textwidth]{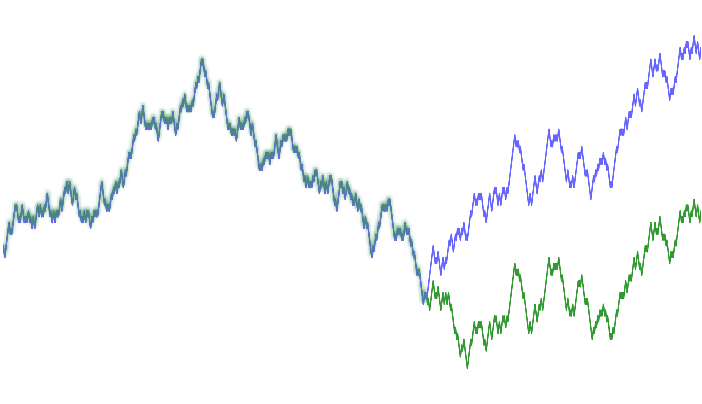}
    \caption{Illustration of the coupling between the KPZ fixed points on $[0,2]$ started from compactly supported flat $0$ initial data on $[0, 1]$, that is $0\cdot \delta_{[0, 1]}$ (recall \eqref{eq: max-plus indicator}) (\color{ForestGreen} green\color{black}) and the superposition of two narrow wedges at $0, 1$, that is $0\cdot \delta_{\{0, 1\}}$ (\color{blue} blue\color{black}).}
    \label{fig: KPZ coal}
\end{figure}

We now prove a continuity result for the infinite last passage representation of the KPZ fixed point at the origin. This is the content of the following lemma.

\begin{lemma}\label{lemma: infinite lpp cont at zero}
 Fix finitary initial data with bounded `max-plus' support $\supp{h_0}\subseteq (0,\infty)$ and denote the random `boundary data' as $G_\ell = \max_{x\in \R}(h_0(x) + \mathcal{A}[x\to (0, \ell)])$, $\ell \ge 1$ (cf. \eqref{eq: semi-inf lpp airy}). Then, for any $k\ge 1$ and $t\in K$, $K\subseteq [0, \infty)$ compact, there exists a random almost surely finite $L^k_0$ such that
    \[
    \max_{\substack{k\le \ell}} (G_\ell + \mathcal{A}[(0,\ell)\to (s, k)]) = \max_{\substack{k\le \ell\le L^k_0 }} (G_\ell + \mathcal{A}[(0,\ell)\to (s, k)])\,,\quad \text{ for all } 0\le s\le t\,.
    \]
    In particular, we have the continuity near the origin
     \[
        \lim_{t\searrow 0}\max_{\substack{k\le \ell}} (G_\ell + \mathcal{A}[(0,\ell)\to (t, k)]) = G_k\,,\qquad k\ge 1\,.
        \]
\end{lemma}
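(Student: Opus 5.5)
Our plan is to reduce the truncation claim to a comparison of growth rates: the boundary data $G_\ell$ decay in $\ell$ fast enough to beat the growth of the last passage values $\mathcal{A}[(0,\ell)\to(s,k)]$, uniformly over $s\in[0,t]$. Write $T=\sup K$; it suffices to treat $s\in[0,T]$. First we bound $G_\ell$ from above. Since $\supp{h_0}\subseteq(0,\beta]$ is bounded and $h_0$ is upper semicontinuous and locally bounded, $h_0\le M<\infty$ on its support. Also $\mathcal{A}[x\to(0,\ell)]$ is the semi-infinite last passage value from \eqref{eq: semi-inf lpp airy}. By the metric composition law (Lemma \ref{Lemma: Metric Composition}), for $x$ in the support and $k'$ large,
\[
\mathcal{A}[x_{k'}\to(0,\ell)]-\mathcal{A}[x_{k'}\to(0,1)] \le -\mathcal{A}[(0,\ell)\to(0,1)] = \mathcal{A}_\ell(0)-\mathcal{A}_1(0)\quad\text{(degenerate path)}.
\]
More usefully, as in the proof of Proposition \ref{prop: separation of initial data}, we get $G_\ell\le G_1+\mathcal{A}[(\varepsilon,\ell)\to(0,\ell)]-\mathcal{A}[(\varepsilon,\ell)\to(0,1)]$, where $\varepsilon=\varepsilon^\infty_{\beta,\ell}$. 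This gives an upper bound on $G_\ell$ by $G_1$ minus a quantity that grows like the gap between line $1$ and line $\ell$. Alternatively, and more robustly, we would cite the estimate behind \cite[Proposition 5.1]{sarkar2021brownian}, which already provides a random $L_0$ for $k=1$ on compacts. The case of general $k$ then follows by the same argument with the top line replaced by line $k$.

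Next we bound the last passage values from above. For $s\in[0,T]$, any path from $(0,\ell)$ to $(s,k)$ collects at most $\sum_{i=k}^{\ell}(\max_{[0,T]}\mathcal{A}_i-\min_{[0,T]}\mathcal{A}_i)$. Using the parabolic Airy line ensemble asymptotics $\mathcal{A}_i(x)\approx -(3\pi i/2)^{2/3}$ with uniform-in-$[0,T]$ fluctuations, we get $\mathcal{A}[(0,\ell)\to(s,k)]\le \mathcal{A}_k(s)-\mathcal{A}_\ell(0)+C\ell^{1/3}\cdot(\text{something sublinear})$. Here it is best to use the known bound $\mathcal{A}[(0,\ell)\to(s,k)]\le \mathcal{A}_k(s)-\mathcal{A}_\ell(0)+2\sqrt{2 s\ell}+o(\sqrt{\ell})$ (Brownian-LPP scaling, cf. \cite[Section 5]{DOV}). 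Combining this with $G_\ell\le C-\mathcal{A}_1(0)+\mathcal{A}_\ell(0)-c\sqrt{\ell}$, which comes from the semi-infinite geodesic from $\beta>0$ reaching level $\ell$ at times of order $-\ell/(2\beta\cdot)$, we get
\[
G_\ell+\mathcal{A}[(0,\ell)\to(s,k)]\le C' + 2\sqrt{2T\ell}-c_\beta\sqrt{\ell}\cdot(\ldots).
\]
This tends to $-\infty$, at least once $T$ is small relative to the drift. For general compact $K$ we would instead use the exact metric composition identity $G_\ell+\mathcal{A}[(0,\ell)\to(s,k)]\le \mathcal{A}[x\to(s,k)]$-type bounds and the finiteness of $\max_{x}(h_0(x)+\mathcal{A}[x\to(s,k)])$ from the shape estimate \eqref{eq: airy shape bnds}. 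Since the $\ell$-indexed terms are dominated by a finite quantity while each individual term tends to $-\infty$, this yields a finite maximising index $L^k_0$ uniform over $s\le T$. This uniform-in-$s$ domination is the step we expect to be the main obstacle. We would handle it by sandwiching $s\in[0,T]$ between the endpoints and using monotonicity of last passage values in the endpoint, together with the coupling \eqref{eq: airy sheet coupling full space}.

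Given the truncation, continuity is immediate. The right side is a maximum of finitely many functions $s\mapsto G_\ell+\mathcal{A}[(0,\ell)\to(s,k)]$, each continuous on $[0,T]$ by continuity of last passage values in the endpoint. At $s=0$ the only path from $(0,\ell)$ to $(0,k)$ has length $0$, so the value there is $\max_{k\le\ell\le L^k_0}G_\ell=G_k$ by the monotonicity $G_\ell\ge G_{\ell+1}$ of Proposition \ref{prop: separation of initial data}. Letting $t\searrow0$ gives the limit $G_k$.
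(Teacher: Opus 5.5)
There is a genuine gap: the growth-rate comparison rests on an estimate that is false, and the uniformity in $s$ is never actually obtained.

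\textbf{The bound on $G_\ell$ is false.} Your key claim is $G_\ell\le C-\mathcal{A}_1(0)+\mathcal{A}_\ell(0)-c\sqrt{\ell}$. Fix $x\in\supp{h_0}$ and $k'\ge\ell$. By the metric composition law, pick $\tau\le 0$ with $\mathcal{A}[x_{k'}\to(0,1)]=\mathcal{A}[x_{k'}\to(\tau,\ell)]+\mathcal{A}[(\tau,\ell)\to(0,1)]$. Telescoping the length of any path from $(\tau,\ell)$ to $(0,1)$ shows it equals $\mathcal{A}_1(0)-\mathcal{A}_\ell(\tau)$ minus the gaps $\mathcal{A}_i(r)-\mathcal{A}_{i+1}(r)>0$ at its jump times. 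Hence $\mathcal{A}[(\tau,\ell)\to(0,1)]\le\mathcal{A}_1(0)-\mathcal{A}_\ell(\tau)$. Staying on line $\ell$ gives $\mathcal{A}[x_{k'}\to(0,\ell)]\ge\mathcal{A}[x_{k'}\to(\tau,\ell)]+\mathcal{A}_\ell(0)-\mathcal{A}_\ell(\tau)$. Subtracting and letting $k'\to\infty$ in \eqref{eq: semi-inf lpp airy} yields
\[
G_\ell-\mathcal{A}_\ell(0)\ge G_1-\mathcal{A}_1(0)\qquad\text{for every }\ell .
\]
Combined with your bound, this would force $G_1\le C-c\sqrt{\ell}$ for all $\ell$, which is impossible. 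Your ``degenerate path'' inequality has the same defect: $\mathcal{A}[(0,\ell)\to(0,1)]=0$, as you yourself use in the last paragraph, so it only gives $G_\ell\le G_1$.

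In fact, for a narrow wedge at $x>0$, the representation $\mathcal{S}(x,s)=\max_\ell(G_\ell+\mathcal{A}[(0,\ell)\to(s,1)])$ together with \eqref{eq: airy shape bnds} gives $\sup_\ell(G_\ell-\mathcal{A}_\ell(0))\ge\sup_{s\ge 0}(\mathcal{S}(x,s)-\mathcal{S}(0,s))=\infty$. So no argument in which $\mathcal{A}_\ell(0)$ cancels between $G_\ell$ and a bound $\mathcal{A}[(0,\ell)\to(s,k)]\le\mathcal{A}_k(s)-\mathcal{A}_\ell(0)+(\text{nonnegative error})$ can produce a drift to $-\infty$. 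Your $2\sqrt{2s\ell}$ bound is of exactly this type. What actually suppresses high levels on a compact time window is the accumulated jump cost, which such bounds discard. Even taken at face value, your sketch only treats small $T$.

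\textbf{The uniformity in $s$ is not handled.} You dominate all terms by $\max_x(h_0(x)+\mathcal{A}[x\to(s,k)])$ and assert that each term diverges pointwise (itself unproved). This gives no bound on the maximising index that is uniform in $s$. There is also no ``monotonicity of last passage values in the endpoint'' to sandwich with: already $s\mapsto\mathcal{A}[(0,k)\to(s,k)]=\mathcal{A}_k(s)-\mathcal{A}_k(0)$ is not monotone.

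The monotone object is the geodesic, and that is the missing idea in the paper's proof:
\begin{itemize}
\item By \cite[Lemma 3.6]{sarkar2021brownian} there is a random $Y$ such that the semi-infinite geodesic $\pi[x_0\to Y]$, with $x_0=\sup\supp{h_0}$, is still at level at least $k$ at time $\sup K$.
\item By ordering of geodesics in the endpoint and in the starting direction (\cite[Proposition 2.8, Lemma 3.5]{sarkar2021brownian}), every geodesic from $x_n$, $x\in\supp{h_0}$, to $(s,k)$ with $s\le\sup K$ crosses time $0$ at level at most $L^k_0:=\pi[x_0\to Y](0)<\infty$.
\item Metric composition at time $0$ then gives the truncated formula uniformly in $s$ and $x$, with no growth estimates. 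One passes to $n\to\infty$ and maximises over a countable dense part of the support.
\end{itemize}
Deferring to \cite[Proposition 5.1]{sarkar2021brownian} and saying ``the same argument for line $k$'' only becomes a proof once this sandwiching is supplied. Your final continuity step, given the truncation, is fine and matches the paper.
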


\begin{proof}
    We first show that for any $k\ge 1$ and $t\in K$, $K\subseteq [0, \infty)$ compact, there exists a random almost surely finite $L^k_0$ such that
    \[
    \max_{\substack{k\le \ell}} (G_\ell + \mathcal{A}[(0,\ell)\to (s, k)]) = \max_{\substack{k\le \ell\le L^k_0 }} (G_\ell + \mathcal{A}[(0,\ell)\to (s, k)])\,,\quad \text{ for all } 0\le s\le t\,.
    \]
    The rest of the proof would follow by the continuity of last passage values (over the continuous environment).

    Indeed, fix $k\ge 1, t\ge 0, K\subseteq \R$ as above. Also set $x_0 = \sup \supp{h_0} >0$ and for $x>0$, $n\ge 1$, $x^n_0 = (-\sqrt{n/2x}, n)$. Now, by \cite[Lemma 3.6]{sarkar2021brownian}, there exists a random $Y\in \N$ such that the (rightmost) semi-infinite geodesic starting from $(Y, 1)$, $\pi[x, Y]$ satisfies $\pi[x, Y](\sup K)\ge k$. Moreover, for any fixed $\ell\ge k$, there exists a random $N_{x, \ell}\in \N$, such that the rightmost geodesics $\pi[x, Y],\pi[x^n_0, (Y,1)],\pi[x, (0, 1)], \pi[x^n_0, (0,1)]$, pass through a common random point $(T, d(T))$ with $T\le 0 $ for all $n\ge N_{x, \ell}$. Now, by the ordering of geodesics, \cite[Proposition 2.8]{sarkar2021brownian}, we have for any fixed $x\in \supp{h_0}$, almost surely, $\pi[x^n, 1]\le \pi[x^n, (\sup K,\ell)]\le \pi[x^n, Y]\le \pi[x, Y]$ for all $n\ge N_{x, \ell}$.
    Hence, we have for all $t\in K$ by the ordering of geodesics, almost surely, uniformly in $t\in K$
    \[
    \mathcal{A}[x^n \to (t, k)] = \max_{k\le \ell\le L^k_0}\mathcal{A}[x^n \to (0, \ell)]+ \mathcal{A}[(0, \ell)\to (t, k)]\,,\qquad n\ge N_{x, \ell}\,,
    \]
    with $L^k_0 = \pi[x, Y](0) \le \pi[x_0, Y](0)$ (by \cite[Lemma 3.5]{sarkar2021brownian}). Thus, we have by \cite[Theorem~ 3.7]{sarkar2021brownian}, for $\ell \ge k$, $x\in \supp{h_0}$,
    \begin{align*}
    \max_{k\le \ell} (\mathcal{A}[x \to (0, \ell)]+ \mathcal{A}[(0,\ell)\to (t, k)])&= \lim_{n\to \infty}\mathcal{A}[x^n \to (t, k)]-\mathcal{A}[x^n \to (t, 1)]\\
    &= \max_{k\le \ell\le L^k_0}\mathcal{A}[x \to (0, \ell)]+ \mathcal{A}[(0, \ell)\to (t, k)]\,.
    \end{align*}
    Since the $L^k_0$ is uniform in $x$, and the support of the initial data can always be taken to be countable, we conclude almost surely, for all $t\in K$,
    \[
    \max_{k\le \ell} (G_\ell + \mathcal{A}[(0,\ell)\to (t, k)]) = \max_{k\le \ell\le L^k_0} (G_\ell + \mathcal{A}[(0,\ell)\to (t, k)])\,,
    \]
    which concludes the proof.
\end{proof}

\begin{remark}
    The above proof shows that one can make sense of the `infinite' Skorokhod reflections
    \[
       \max_{\ell\ge m}(G_{\ell}+\mathcal{A}[(0,\ell)\to (s,m)])\,, \quad s\ge 0\,, m\ge 1
       \]
    as random continuous functions on the entire path space $\mathscr{C}(\R;\R)$, since on compacts by geodesic geometry, there is an almost surely finite random maximiser. This means we can represent the KPZ fixed point  at unit time started from $h_0$,
     \[
       \mathfrak{h}_1(s, h_0) = \max_{\ell\ge 1}(G_{\ell}+\mathcal{A}[(0,\ell)\to (s,1)])\,, \quad s\ge 0\,, m\ge 1\,.
       \]
    For more details, see \cite[Proposition 5.1]{sarkar2021brownian} and \cite[Section 5]{tassopoulos2025quantitativebrownianregularitykpz}.
\end{remark}

We are now in a position to prove that the increments of the KPZ fixed point started from compact initial data are mutually absolutely continuous with respect to Brownian motion (see Figure \ref{fig: gibbs resampling} for an illustration).

  \begin{figure}[ht]
   \includegraphics[width = 0.7\textwidth]{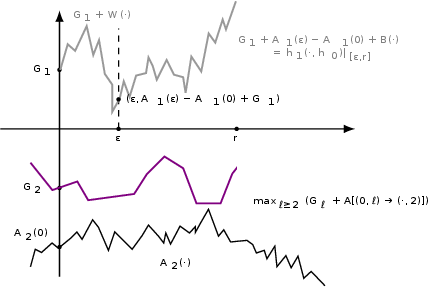}
       \caption{Illustration of the Brownian Gibbs resampling for the top line of the Airy line ensemble appearing in the variational expression for the KPZ fixed point on the compact interval $[0, r]$. In short, it can be expressed (up to mutual absolute continuity) as a concatenation of a Brownian bridge $W$ and Brownian motion $B$ (conditionally independent given the Airy line ensemble) at the point $(\varepsilon, \mathcal{A}_1(\varepsilon-\mathcal{A}_1(0)+G_1)$, conditioned to not hit $\mathcal{A}_2$. The increments of the KPZ fixed point in the interior interval $[\varepsilon, r]$ on the event $\mathcal{A}_1(\cdot)-\mathcal{A}_1(0)+G_1$ avoids $\max_{\ell \ge 2}(G_\ell + \mathcal{A}[(0, \ell)\to (\cdot,2)])$ are simply the increments of the Brownian motion $B$. By standard monotonicity results for Brownian bridges, conditionally on $B|_{[\varepsilon, r]}$, the above non-intersection condition can always be ensured to occur with positive probability. }
       \label{fig: gibbs resampling}
   \end{figure}

\begin{theorem}\label{thm: mutual abs cont}
     Fix $t>0$ and let $h_0 \in \mathrm{UC}_c$, that is, its max-plus support (cf. Definition \ref{def: max support}) is bounded. Then, for any $y,\in \R$ and $r>0$, we have the (mutual) absolute continuity relation 
    $\nu^{h_0} \ll \mu\ll \nu^{h_0}$, 
    on paths on $\mathscr{C}([y, y+r];\R)$
    where $\nu^{h_0}$ denotes the law of the increments of the KPZ fixed point started from $h_0$ and $\mu$ the appropriate restriction of the rate two Wiener measure.
\end{theorem}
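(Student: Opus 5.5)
The direction $\nu^{h_0}\ll\mu$ is \cite[Theorem~1.2]{sarkar2021brownian}, so only $\mu\ll\nu^{h_0}$ needs proof. By the $1:2:3$ scaling and the translation invariance of the Airy sheet \eqref{eq: Airy sheet symmetry}, I may take $t=1$, $y=0$ and $\supp{h_0}\subseteq(0,\beta)$ for some $\beta>0$. Using the coupling \eqref{eq: airy sheet coupling full space} and the remark after Lemma~\ref{lemma: infinite lpp cont at zero}, I write
\[
\mathfrak{h}(s)-\mathfrak{h}(0)=\Big(G_1+\mathcal{A}_1(s)-\mathcal{A}_1(0)\Big)\vee H(s)-G_1,\qquad H(s)\stackrel{\mathrm{def}}{=}\max_{\ell\ge2}\big(G_\ell+\mathcal{A}[(0,\ell)\to(s,2)]\big),
\]
for $s\in[0,r]$. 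Here $G_\ell$ is $\mathscr{F}_-$-measurable, and $H$ is a continuous functional of $(\mathcal{A}_i|_{[0,r]})_{i\ge2}$ and of $(G_\ell)_{\ell\ge 2}$; continuity uses Lemma~\ref{lemma: infinite lpp cont at zero}, which gives a finite random cutoff $L_0$. By Proposition~\ref{prop: separation of initial data}, $G_1>G_2=H(0)$ almost surely. This decomposition is the one pictured in Figure~\ref{fig: gibbs resampling}.

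Fix a Borel set $A\subseteq\mathscr{C}_0([0,r];\R)$ with $\mu(A)>0$. The goal is $\PP(\mathfrak{h}-\mathfrak{h}(0)\in A)>0$. Since a single point $\varepsilon$ has $\mu$-measure zero effect, I first reduce to sets determined by increments after a small time. Pick $\varepsilon\in(0,r)$ and write each path as its restriction to $[0,\varepsilon]$ followed by its increments on $[\varepsilon,r]$. By the Markov property of $\mu$ and Fubini, there are $\varepsilon>0$, a set $A_1$ of initial pieces of positive Wiener measure on $[0,\varepsilon]$, and for each such piece a fibre $A_2$ of increments on $[\varepsilon,r]$ with $\mu(A_2)>0$. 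Shrinking further, I may assume $A_1$ consists of paths staying within some small $\delta$ of zero, and that the fibres $A_2$ have uniformly positive measure.

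Next I apply the Brownian Gibbs property to the top line on $[-1,r+1]$, conditionally on $\mathscr{F}^{\mathrm{ext}}$ for $\{1\}\times(-1,r+1)$. Conditionally, $\mathcal{A}_1$ is a Brownian bridge conditioned to stay above $\mathcal{A}_2$. The quantities $G_\ell$ and $H$ depend on $\mathcal{A}_1$ only through $\mathcal{A}_1|_{(-\infty,0]}$; the $\mathcal{A}_1$-dependence of $G_1$ is harmless because it enters only as the constant height shift. I will handle this by conditioning on $\mathcal{A}_1|_{[-1,0]}$ as well, and on a suitable event this leaves the bridge on $[0,r+1]$. By Lemma~\ref{lemma: bb comparison lemma}, the law of $\mathcal{A}_1(\cdot)-\mathcal{A}_1(0)$ on $[0,r]$, without the non-intersection constraint, is mutually absolutely continuous with rate two Brownian motion. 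Removing the conditioning on avoiding $\mathcal{A}_2$ costs only a positive constant factor.

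So I need, with positive probability, a path $\omega$ in $A$ such that, with $\mathcal{A}_1-\mathcal{A}_1(0)=\omega$ on $[0,r]$, the following holds:
\[
G_1+\omega(s)>H(s)\ \text{ for all } s\in[0,r],\qquad \mathcal{A}_1>\mathcal{A}_2 \text{ on } [0,r].
\]
On this event $\mathfrak{h}-\mathfrak{h}(0)=\omega\in A$. To get it, take the random gap $\eta=G_1-G_2>0$. On a positive-probability event, $H(s)-G_1<-\eta/2$ on $[0,\varepsilon]$ by continuity at $0$ (Lemma~\ref{lemma: infinite lpp cont at zero}), and the initial piece stays within $\delta<\eta/4$. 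On $[\varepsilon,r]$ I need the Brownian increments to stay above $H-G_1$ and above $\mathcal{A}_2-\mathcal{A}_1(0)$, both of which are fixed continuous functions given the conditioning. Adding a large upward shift from the initial piece is not available, since the initial piece is confined near zero. Instead I intersect $A_2$ with a tube event of the form "stays above a given continuous function". The set $A_2$ may be arbitrary, so I cannot simply intersect with an open set. Here I use Lemma~\ref{lemma: bridge monotonicity} in the way Figure~\ref{fig: gibbs resampling} indicates: condition on $B=\mathcal{A}_1$-increments on $[\varepsilon,r]$, and resample only the bridge $W$ on $[0,\varepsilon]$, which determines the level $\mathcal{A}_1(\varepsilon)$. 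By Lemma~\ref{lemma: support bb}, $W$ can reach height $\mathcal{A}_1(\varepsilon)$ large enough, with positive probability, that $\mathcal{A}_1(\varepsilon)+B$ clears both constraints on $[\varepsilon,r]$ for every $B$ in a positive-measure subset of $A_2$. But then the initial piece is not small, so the increment on $[0,r]$ must be read differently. I would therefore state the conclusion for increments on $[\varepsilon,r]$ relative to $\mathfrak{h}(\varepsilon)$ together with the $[0,\varepsilon]$ piece lying in $A_1$, where $A_1$ is now a set of initial pieces reaching a high level.

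The main obstacle is exactly this bookkeeping. A Fubini decomposition must allow the $[0,\varepsilon]$ part of $A$ to lie in a positive-measure set containing paths that rise high. In general, disintegrating $\mu(A)>0$ yields such pieces only if one chooses $\varepsilon$ and a density point cleverly. If this fails, I would fall back on a cleaner route: note that the event that $\mathfrak{h}-\mathfrak{h}(0)$ coincides with $\mathcal{A}_1-\mathcal{A}_1(0)$ on all of $[0,r]$ has conditional probability bounded below on a positive-probability set of $\mathcal{A}_1$-paths. That set is an open-in-$\mathcal{A}_1$ condition and has positive probability, for instance by the tube support and Theorem~\ref{thm: mut abs cont airy bm}. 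The remaining task would be to show that $\{\omega\in A\}\cap\{\text{clearance}\}$ has positive measure; that is the step I expect to need the most care.
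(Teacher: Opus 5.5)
Your setup matches the paper's: the boundary data $G_\ell$ at $0$, the gap $G_1>G_2=H(0)$ from Proposition~\ref{prop: separation of initial data}, and Brownian Gibbs resampling of $\mathcal{A}_1$. The proof does not close, however. The step you defer at the end, that $\{\omega\in A\}\cap\{\text{clearance}\}$ has positive probability, is the whole content of the theorem, and your setup gives no mechanism for it.

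The difficulty comes from measuring increments starting at the same point $0$ where the boundary data sits. After conditioning on $\mathscr{F}_-$ and the lower lines, the barriers $H-G_1$ and $\mathcal{A}_2-\mathcal{A}_1(0)$ are fixed functions, and the constrained path is $\omega$ itself. An arbitrary Borel $A$ with $\mu(A)>0$ can consist entirely of paths that cross such a fixed barrier somewhere in $[\varepsilon,r]$. What is needed is extra randomness that moves the path relative to the barriers without affecting the event $\{\omega\in A\}$. Neither a tube/support argument nor a Fubini or density-point decomposition of $A$ supplies this. Raising $\mathcal{A}_1(\varepsilon)$ through the bridge on $[0,\varepsilon]$ is the right mechanism. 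But, as you noticed, in your formulation it conflicts with the constraint that $A$ places on the piece on $[0,\varepsilon]$. (A minor point: $\mathfrak{h}$ is the Skorokhod reflection $F_1(s)+\max(0,\sup_{u\le s}(H(u)-F_1(u)))$ with $F_1=G_1+\mathcal{A}_1-\mathcal{A}_1(0)$, not $F_1\vee H$. The two agree on the clearance event, which is all you use.)

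The missing observation is that your own remark, ``state the conclusion for increments on $[\varepsilon,r]$ relative to $\mathfrak{h}(\varepsilon)$'', already gives the full theorem. Translate $y$ and the bounded support of $h_0$ together, using \eqref{eq: Airy sheet symmetry}. You may then assume the interval is $[\varepsilon,r]$ with $\varepsilon>0$, while $\supp{h_0}\subseteq(0,\infty)$ and the boundary point stays at $0$. Now $A$ constrains only the increments on $[\varepsilon,r]$, and the buffer $[0,\varepsilon]$ is free. This is exactly the paper's route:
\begin{itemize}
\item It replaces the increments of $\mathcal{A}_1$ on $[\varepsilon,r]$ by a Brownian motion $B$, and $\mathcal{A}_1$ on $[0,\varepsilon]$ by a bridge $W=W_0+L$ ending at $\mathcal{A}_1(\varepsilon)$.
\item Condition on $B$ and on the ensemble off $\{1\}\times(\varepsilon/2,r)$. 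By Lemma~\ref{lemma: bridge monotonicity}, $\mathcal{A}_1(\varepsilon)$ then exceeds any given level with positive probability. By Lemma~\ref{lemma: support bb}, this holds jointly with $\norm{W_0}_{L^\infty}\le\eta$.
\item A large $\mathcal{A}_1(\varepsilon)$ lifts $\mathcal{A}_1(\varepsilon)+B$ above both barriers on $[\varepsilon,r]$ for \emph{every} $B$.
\item On $[0,\varepsilon]$ the affine part $L$ is nonnegative and large away from $0$. Near $0$, the gap $G_1>H(0)$ together with Lemma~\ref{lemma: infinite lpp cont at zero} handles the start.
\item So the clearance event $E$ has positive conditional probability for every $B$. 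Then $0=\PP(B\in A,E)=\mathbb{E}[\mathbf{1}(B\in A)\,\PP(E\mid\cdot)]$ forces $\PP(B\in A)=0$, with no disintegration of $A$ at all.
\end{itemize}
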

\begin{remark}
    The above mutual absolute continuity result can be partially extended to the full Airy sheet, see Theorem \ref{thm: abs cont airy sheet}.
\end{remark}

\begin{proof}By the local Brownianness of the KPZ fixed point, (cf. \cite{sarkar2021brownian}) it suffices to show $ \mu\ll \nu^{h_0}$.

     By $1:2:3$-scaling, we can without loss of generality set $t = 1$. By the translation symmetries of the Airy sheet, we can also set $y=0$, and fix the support of the initial data to lie in $[0, \infty)$.
    
    Now, suppose $A\subseteq \mathscr{C}([\varepsilon, r]; \R)$, $0\le \varepsilon < r$ only depending on increments, such that $\nu^{h_0}(A) = 0$. Then we estimate (by the metric composition law for last passage values),
    \begin{align*}
        0 &=\nu^{h_0}(A)= \PP(\mathfrak{h}_1(\cdot, h_0)\in A)\\
        &\ge \PP(\mathcal{A}_1(\cdot)\in A\,,\max_{\ell \ge 2}(G_\ell + \mathcal{A}[(0, \ell)\to (s, 2)])\le G_1 + \mathcal{A}_1(s) - \mathcal{A}_1(0)\,, s\in [0, r])\,.
    \end{align*}
    Conditioning on the sigma algebra $\mathscr{F} \equiv \sigma(\{\mathcal{A}_i(x): x\not \in (\varepsilon, r+1)\times \{1\}\}$,
    and using the Brownian Gibbs property, conditionally on $\mathscr{F}$ the absolute continuity relation $\tilde{\mu} \ll \nu$ on $\mathscr{C}([\varepsilon, r],\R)$
    where $\tilde{\mu}$ is the law of a rate two Brownian motion $B$ conditioned on the event $\mathcal{A}_1(\varepsilon) + B(s) > \mathcal{A}_{2}(s)$, for all $s\in [\varepsilon, r]$ ($B$ has the law of the rate two Wiener measure $\mu$) and $\nu$ is the conditional law of the increments of $\mathcal{A}_1(y)-\mathcal{A}_1(\varepsilon)\,, y\in [\varepsilon, r]$.
    Hence, we have
    \begin{align*}
        0 &= \PP\bigg(B(\cdot)\in A\,,\max_{\ell \ge 2}(G_\ell + \mathcal{A}[(0, \ell)\to (s, 2)])\le G_1 + \mathcal{A}_1(s) - \mathcal{A}_1(0)\,, s\in [0, \varepsilon]\,,\\
        &\max_{\ell \ge 2}(G_\ell + \mathcal{A}[(0, \ell)\to (s, 2)])\le G_1 + B(s)+ \mathcal{A}_1(\varepsilon) - \mathcal{A}_1(0)\,, s\in [\varepsilon, r]\,,\\
        &\mathcal{A}_1(\varepsilon) + B(s) > \mathcal{A}_{2}(s)\,, s\in [\varepsilon, r]\bigg)\,.
    \end{align*}
    Now, conditioning on the sigma algebra 
   $\mathscr{F}' \equiv \sigma(\{\mathcal{A}_i(x): x\not \in (0, \varepsilon)\times \{1\}\}$,
    and using the Brownian Gibbs property, we have as before
    \begin{align*}
        0 &= \PP\bigg(B(\cdot)\in A\,,\max_{\ell \ge 2}(G_\ell + \mathcal{A}[(0, \ell)\to (s, 2)])\le G_1 + W(s)\,, s\in [0, \varepsilon]\,,\\
        &\mathcal{A}_1(0) + W(s) > \mathcal{A}_{2}(s)\,, s\in [0, \varepsilon]\,,\mathcal{A}_1(\varepsilon) + B(s) > \mathcal{A}_{2}(s)\,, s\in [\varepsilon, r]\,,\\
        &\max_{\ell \ge 2}(G_\ell + \mathcal{A}[(0, \ell)\to (s, 2)])\le G_1 + B(s)+ \mathcal{A}_1(\varepsilon) - \mathcal{A}_1(0)\,, s\in [\varepsilon, r]\bigg)\\
        &= \PP(B(\cdot)\in A\,, E)\,,
    \end{align*}
    where 
    \begin{align*}
        E&\equiv \bigg\{\max_{\ell \ge 2}(G_\ell + \mathcal{A}[(0, \ell)\to (s, 2)])\le G_1 + W(s)\,, s\in [0, \varepsilon]\,,\\
        &\mathcal{A}_1(0) + W(s) > \mathcal{A}_{2}(s)\,, s\in [0, \varepsilon]\,,\mathcal{A}_1(\varepsilon) + B(s) > \mathcal{A}_{2}(s)\,, s\in [\varepsilon, r]\,,\\
        &\max_{\ell \ge 2}(G_\ell + \mathcal{A}[(0, \ell)\to (s, 2)])\le G_1 + B(s)+ \mathcal{A}_1(\varepsilon) - \mathcal{A}_1(0)\,, s\in [\varepsilon, r]\bigg\}\,,
    \end{align*}
    $W$ is a rate two Brownian bridge starting at $(0,0)$ and ending at $(\varepsilon, \mathcal{A}_1(\varepsilon)-\mathcal{A}_1(0))$ (independent from $B$, see Figure \ref{fig: gibbs resampling}). Now, by the Brownian Gibbs property and the fact that conditionally on
    $\mathscr{F}'' \equiv \sigma(\{\mathcal{A}_i(x): x\not \in (\varepsilon/2, r)\times \{1\}\})$,
    the stochastic domination holds (cf. Lemma \ref{lemma: bridge monotonicity}):
    $\mathcal{A}_1(\varepsilon) \ge_d \tilde{W}_1(\varepsilon)$,
    where for two random variables $X,Y$, $X\le_d Y$ denotes the stochastic domination of $X$ by $Y$ and $\tilde{W}_1(\cdot)$ has the law of a rate two Brownian bridge starting from $(\varepsilon/2, \mathcal{A}_1(\varepsilon/2))$ and ending at $(r, \mathcal{A}_1(r))$.  Moreover, we can express $W$ as $W_0 + L$, where $W_0$ is a Brownian bridge vanishing at both endpoints and $L$ an affine function with endpoints $(0, 0)$ and $(\varepsilon, \mathcal{A}_1(\varepsilon)-\mathcal{A}_1(0))$. Now, conditionally on $\sigma(B|_{[\varepsilon , r]})\lor \mathscr{F}''$, we have by Lemma \ref{lemma: support bb} and the above for any $a \in \R, \eta > 0$,
    \begin{align*}
        \PP(\mathcal{A}_1(\varepsilon) > a\,, \norm{W_0}_{L^\infty}\le \eta | \sigma(B|_{[\varepsilon , r]})\lor \mathscr{F}'')&= \PP(\mathcal{A}_1(\varepsilon) > a | \sigma(B|_{[\varepsilon , r]})\lor \mathscr{F}'')\cdot \PP(\norm{W_0}_{L^\infty}\le \eta)\\
        &\ge  \PP(\tilde{W}_1(\varepsilon) > a| \sigma(B|_{[\varepsilon , r])})\cdot \PP(\norm{W_0}_{L^\infty}\le \eta)> 0
    \end{align*}
    almost surely. Moreover, conditioning on $\sigma(B|_{[\varepsilon , r]})\lor \mathscr{F}''$, we can treat the data
    \begin{equation}\label{eq: data}
        \bigg(B, \max_{\ell \ge 2}(G_\ell + \mathcal{A}[(0, \ell)\to (s, 2)]), \mathcal{A}_2, \mathcal{A}_1(0), G_1\bigg)\in \mathscr{C}([\varepsilon, r]; \R)\times \mathscr{C}^2([0, r]; \R)\times \R^2
    \end{equation}
    as fixed. Taking $\mathcal{A}_1(\varepsilon)$ sufficiently large and $\varepsilon >0$ sufficiently small, which will depend on the data \eqref{eq: data}, we obtain $\PP(E\, |\sigma(B|_{[\varepsilon , r]})\lor \mathscr{F}'' )>0$ almost surely.
    
   Since $0=\PP(B(\cdot)\in A\,, E)=\mathbb{E}[\mathbf{1}(B(\cdot)\in A) \cdot \PP(E\, |\sigma(B|_{[\varepsilon , r]})\lor \mathscr{F}'' )]$, and $\PP(E\, |\sigma(B|_{[\varepsilon , r]})\lor \mathscr{F}'' )>0$,
    we finally obtain $\PP(B(\cdot)\in A) = 0$, concluding the proof.
   \end{proof}

   \begin{remark}
       Observe that the same argument in conjunction with Proposition \ref{prop: separation of initial data} gives for any $m\ge 1$, the laws of the increments of the `infinite' Skorokhod reflections
       \[
       \max_{\ell\ge m}(G_{\ell}+\mathcal{A}[(0,\ell)\to (s,m)])\,, \quad s\ge 0
       \]
       are mutually absolutely continuous with respect to the law of a rate two Brownian motion on paths on $\mathscr{C}([\varepsilon, t]; \R)$, for any $0 < \varepsilon < t$ (even conditionally on $\mathscr{F}_-$). 
   \end{remark}

We now prove that if the support of the initial data is bounded in at least one direction, one can extend the above mutual absolute continuity on compacts. 

\begin{theorem}\label{thm: mut abs cont unbounded supp init data}
    Let $h_0\in \mathscr{I}_t$ be such that $\supp{h_0}$ is bounded in at least one direction. Then, for any compact $K\subset \R$, $t>0$ we have the (mutual) absolute continuity relation 
    $\mu \ll \nu^{h_0}\ll \mu$, on paths on $\mathscr{C}_0(K;\R)$
    where $\nu^{h_0}$ denotes the law of the increments of the KPZ fixed point started from $h_0$ at time $t>0$ in $K$.
\end{theorem}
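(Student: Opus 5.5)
The direction $\nu^{h_0}\ll\mu$ is already \cite[Theorem~1.2]{sarkar2021brownian}, so the task is to show $\mu\ll\nu^{h_0}$. The plan is to reduce to Theorem~\ref{thm: mutual abs cont} by truncating the initial data. By the reflection symmetry $\mathcal{S}(x,y)\stackrel{\mathrm{d}}{=}\mathcal{S}(-x,-y)$, together with $1{:}2{:}3$ scaling and translation invariance, we may take $t=1$ and assume $\inf\supp{h_0}>-\infty$; after a shift, $\supp{h_0}\subseteq(0,\infty)$ and $K=[y,y+r]$. For $M\in\N$ set $h_0^M=h_0\cdot\delta_{[0,M]}\in\mathrm{UC}_c$.

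\textbf{Step 1: localisation of the maximiser.} Use the shape bounds \eqref{eq: airy shape bnds} together with the finitary growth condition of Definition~\ref{def: finitary}. As in \cite[Proposition 6.1]{sarkar2021brownian}, this gives an almost surely finite random $M^*$ such that $\mathfrak{h}_1(s,h_0)=\mathfrak{h}_1(s,h_0^{M})$ for all $s\in K$ and all $M\ge M^*$. All fixed points are built from one directed landscape, so they are coupled.

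\textbf{Step 2: passing null sets through the truncation.} Let $A\subseteq\mathscr{C}_0(K;\R)$ satisfy $\nu^{h_0}(A)=0$. Then
\[
\PP\big(\mathfrak{h}_1(\cdot,h_0^M)-\mathfrak{h}_1(y,h_0^M)\in A,\ M^*\le M\big)\le\nu^{h_0}(A)=0 .
\]
We need to remove the event $\{M^*\le M\}$, and a conditional version of the Gibbs argument in the proof of Theorem~\ref{thm: mutual abs cont} should do this. Run that proof for $h_0^M$ on a subinterval $[\varepsilon,r]$ of $K$, where the increments coincide with a Brownian motion $B$ on the event $E$. Then show that, conditionally on $\sigma(B|_{[\varepsilon,r]})\lor\mathscr{F}''$, the event $E\cap\{M^*\le M\}$ has positive probability.

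The point to check is that the event $\{M^*\le M\}$, or a sufficient event for it, is essentially measurable with respect to data away from $(\varepsilon/2,r)\times\{1\}$, or is monotone in $\mathcal{A}_1(\varepsilon)$. Raising the top line near $K$ should only favour maximisers inside $[0,M]$ relative to far-away ones. Concretely, the contribution from $x>M$ is controlled through the boundary data $G_\ell$ of Proposition~\ref{prop: separation of initial data}, which are $\mathscr{F}_-$-measurable. Once this is established, the conditional probability is positive almost surely, and the same argument as before gives $\mu(A)=0$.

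\textbf{Step 3: the endpoint $\varepsilon$.} The argument gives null sets on $[\varepsilon,r]$ for every $\varepsilon>0$, whereas we need them on the whole of $K$. To close this gap, either run the argument on a slightly larger interval (shift $y$ to $y-1$, so that $K$ sits in the interior, and use that the increment law on $K$ is a marginal), or use Proposition~\ref{prop: local comparison KPZ random time} together with Theorem~\ref{thm: mut abs cont airy bm} near the left endpoint.

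The main obstacle I expect is Step~2: making $\{M^*\le M\}$ compatible with the Brownian Gibbs resampling. $M^*$ depends on the whole Airy sheet, and hence on $\mathcal{A}_1$ near $K$. If direct measurability fails, my fallback is to replace $\{M^*\le M\}$ by an explicit sufficient event built from $\mathscr{F}_-$-measurable boundary data, using the lower bounds in Proposition~\ref{prop: separation of initial data}. I would then apply the monotone coupling of Lemma~\ref{lemma: bridge monotonicity} to show that raising $\mathcal{A}_1(\varepsilon)$ preserves that event.
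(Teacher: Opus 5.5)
Step~2 is a genuine gap. It is not a detail to check later; it is the whole content of the theorem beyond Theorem~\ref{thm: mutual abs cont}, and you leave it unproved.

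\textbf{Why Step~2 does not go through as written.} The event $\{M^*\le M\}$ is $\{\mathfrak{h}_1(s,h_0\delta_{[M,\infty)})\le\mathfrak{h}_1(s,h_0^M)\text{ for all }s\in K\}$. It depends on $\mathcal{A}_1$ on $K$, which is exactly what you resample. Nothing available gives the monotonicity you hope for. For a datum $g$ supported in $(0,\infty)$, the Skorokhod form is $\mathfrak{h}_1(s,g)=\mathcal{A}_1(s)+\max\big(G^g_1-\mathcal{A}_1(0),\sup_{0\le r'\le s}(F^g_2(r')-\mathcal{A}_1(r'))\big)$. Raising $\mathcal{A}_1$ away from $0$ lowers the reflection terms of both the $[0,M]$ piece and the tail piece, so their comparison does not move in any evident direction. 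Even with monotonicity, you would still need the event to actually occur, with positive conditional probability given the whole path $B|_{[\varepsilon,r]}$, once $\mathcal{A}_1(\varepsilon)$ is large. That requires controlling the tail's second-line barrier $F_2^{[M,\infty)}$ near $s=0$, which you never address.

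Your fallback does not close the gap either. The lower bounds in Proposition~\ref{prop: separation of initial data} compare $G_m$ and $G_{m'}$ for a single compactly supported datum. They say nothing about the contribution of $x>M$.

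\textbf{The missing idea: no truncation is needed.} For the full datum with support in $(0,\infty)$, the paper writes $\mathfrak{h}_1(\cdot,h_0)=WF_1$, where
\[
F_1=G^\infty_1+\mathcal{A}_1(\cdot)-\mathcal{A}_1(0),\qquad F_2=\max_{\ell\ge2}\big(G^\infty_\ell+\mathcal{A}[(0,\ell)\to(\cdot,2)]\big).
\]
The $G^\infty_\ell$ are $\mathscr{F}_-$-measurable, and $F_2$ involves only lines $i\ge2$ on $[0,\cdot]$. So $F_2$ never sees $\mathcal{A}_1|_{(0,\infty)}$. On $\{F_2\le F_1\}$ the increments of $\mathfrak{h}_1(\cdot,h_0)$ are those of $\mathcal{A}_1$, wherever the maximiser lies. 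The Gibbs argument of Theorem~\ref{thm: mutual abs cont} then applies verbatim, once you know $F_2$ is bounded above on $[0,y+r]$ with $\limsup_{s\downarrow0}F_2(s)=G^\infty_2<G^\infty_1$.

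That is where the localisation belongs. Split the support into blocks $(m-1,m]$. Use \eqref{eq: airy shape bnds} and the finitary growth to show that only finitely many blocks contribute, uniformly on $[0,y+r]$. Then apply Proposition~\ref{prop: separation of initial data} and Lemma~\ref{lemma: infinite lpp cont at zero} blockwise.

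\textbf{If you want to keep your split.} With $[0,M]$ and $[M,\infty)$, replace $\{M^*\le M\}$ by two events:
\begin{itemize}
\item the combined barrier $\max(F_2^{[0,M]},F_2^{[M,\infty)})$ lies below $G_1^{[0,M]}+\mathcal{A}_1(\cdot)-\mathcal{A}_1(0)$;
\item the $\mathscr{F}_-$-measurable event $\{G_1^{[M,\infty)}<G_1^{[0,M]}\}$.
\end{itemize}
You can control the tail barrier near $0$ through $F_2^{[M,\infty)}\le\mathfrak{h}_1(\cdot,h_0\delta_{[M,\infty)})$, which is continuous with value $G_1^{[M,\infty)}$ at $0$. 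But this is just the paper's argument in different form, and the truncation does no work.

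Your Step~3 is fine; the paper likewise translates so that $K\subset(0,\infty)$.
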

       \begin{proof}
       By the local Brownianness of the KPZ fixed point, (cf. \cite{sarkar2021brownian}) it suffices to show $ \mu\ll \nu^{h_0}$.

       By $1:2:3$-scaling, we can without loss of generality set $t = 1$. Also, the translation and reflection symmetries of the Airy sheet allow us to set $y>0$ (by enlarging $K$ if necessary), and by taking the supports to be bounded from the left, fix the support of the initial data to lie in $(0, \infty)$ (since it is compact).

       Recall the notation for the sigma algebra $\mathscr{F}_- = \sigma(\{\mathcal{A}_i(x): i\ge 1\,, x\le 0\})$. Now, by the coupling \ref{eq: airy sheet coupling full space} and the metric composition law \eqref{eq: composition}, we can express the KPZ fixed point as (recall the notation for the Pitman transform \eqref{eq: pitmantrans}), 
       \[
       \mathfrak{h}(s) = \max_{\ell\ge 1}(G^\infty_\ell + \mathcal{A}[(0, \ell) \to (s, 1)]) = WF_1(s)\,,\quad s\in [y, y+r]
       \]
    with 
        \[
        F = \bigg(G^\infty_1 + \mathcal{A}_1(\cdot) - \mathcal{A}_1(0)\,, \max_{2\le \ell}(G^\infty_\ell + \mathcal{A}[(0, \ell)\to(\cdot,2)])\bigg)\,,
        \]
        where
    \[
    G^\infty_\ell \equiv \max_{x\in \R}(h_0(x)+\mathcal{A}[x\to (0, \ell)])
    \]
    and 
    \[
    G^m_\ell \equiv \max_{x\in (m-1,m]}(h_0(x)+\mathcal{A}[x\to (0, \ell)])\,, \quad m\ge 1\,.
    \]
    Moreover, we have
       \[
       \max_{2\le \ell}(G^\infty_\ell + \mathcal{A}[(0, \ell)\to(s,2)]) = \max_{m\ge 1}\max_{2\le \ell \le L^m_0 }(G^{m}_\ell + \mathcal{A}[(0, \ell)\to(s,2)]) \,,\quad s \in [y, y+r]
       \]
       where $L_0^m$ is the semi-infinite geodesic intercept $\pi[m\to y+r](0)$ (cf. \eqref{def: semi-inf geo}). We now show this maximum is attained for an almost surely finite $m^*\ge 1$ uniformly in $s\in [y, y+r]$. Moreover, $m^*$ is measurable with respect to $\mathscr{F}_-$. 

       Indeed, we have as $m\to \infty$, coupling the Airy line ensemble to an Airy sheet $\mathcal{S}(\cdot, \cdot)$ using \eqref{eq: airy sheet coupling full space}, {\begin{align*}
            \max_{2\le \ell \le L^m_0 }(G^{m}_\ell + \mathcal{A}[(0, \ell)\to(s,2)])&\le \max_{1\le \ell \le L^m_0 }(G^{m}_\ell + \mathcal{A}[(0, \ell)\to(s,1)])\\
            &\le \max_{x\in (m -1, m]}(h_0(x)-x^2) + \max_{s\in [0, y+r]}\max_{x\in (m-1, m ]}(\mathcal{S}(x, s)+x^2)\\
            &\le (m-1)\cdot\max_{x\in (m -1, m]}\frac{h_0(x)-x^2}{|x|}+\mathfrak{C}+2m(y+r)\\
            & +c\log^{\frac{2}{3}}(2+ |m|+|y+r|)\,,\quad s\in [0, y+r]
            \end{align*}}
        for some absolute $c> 0$ and almost surely finite $\mathscr{G}$-measurable $\mathfrak{C}$ where the last bound follows from the Airy sheet shape estimates \eqref{eq: airy shape bnds}. Since $h_0$ is finitary (see Definition \ref{def: finitary}), we have
        \[
        \lim_{m\to \infty} \max_{x\in (m -1, m]}\frac{h_0(x)-x^2}{|x|} = -\infty\,,
        \]
        and so for all $m\ge 1$ sufficiently large,
        \begin{align*}
            \max_{2\le \ell \le L^m_0 }(G^{m}_\ell + \mathcal{A}[(0, \ell)\to(s,2)])&\le \max_{2\le \ell \le L^1_0 }(G^{1}_\ell + \mathcal{A}[(0, \ell)\to(s,2)])\,,\quad s\in [0, y+r]\,.
       \end{align*}
       Hence, we have almost surely,
       \begin{equation}\label{eq: inf supp ef trunc}
       \max_{\substack{2\le \ell \le L^m_0 \\m\ge 1}}(G^{m}_\ell + \mathcal{A}[(0, \ell)\to(s,2)]) = \max_{\substack{2\le \ell \le L^m_0 \\1\le m\le m^*}}(G^{m}_\ell + \mathcal{A}[(0, \ell)\to(s,2)])\in \mathscr{C}([0, y+r];\R)\,,
       \end{equation}
       where the continuity follows from Lemma \ref{lemma: infinite lpp cont at zero}. We now conclude 
       \[
       \lim_{s\to 0}\max_{2\le \ell}(G^\infty_\ell + \mathcal{A}[(0, \ell)\to(s,2)]) = G_2^\infty < G^\infty_1\,,
       \]
       the second almost sure strict inequality also follows from the above argument, since $h_0(x) \to -\infty$ as $x\to \infty$ and Proposition \ref{prop: separation of initial data}. 

       To complete the argument, one now proceeds exactly as in the proof of Theorem~ \ref{thm: mutual abs cont}.

       The case of an unbounded `max-plus' support to the left is entirely analogous and follows by the symmetries of the Airy sheet, \eqref{eq: Airy sheet symmetry} and the time-reversal symmetry of Brownian motion.
    \end{proof}

Finally, we now prove that for arbitrary initial data with unbounded support, the law of the increments of the KPZ fixed point is mutually absolutely continuous against the Wiener measure. This crucially uses the fact that the Airy sheet can be constructed as a deterministic function of the Airy line ensemble on the entire plane.

\begin{theorem}\label{thm: mut abs cont finitary init data}
    Let $t>0$ and $h_0\in \mathscr{I}_t$. Then, for any compact $K\subset \R$, have the (mutual) absolute continuity relation
    $\mu \ll \nu^{h_0}\ll \mu$ on paths on $\mathscr{C}_0(K;\R)$,
    where $\nu^{h_0}$ denotes the law of the increments of the KPZ fixed point started from $h_0$ at time $t>0$ in $K$.
\end{theorem}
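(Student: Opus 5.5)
As in the preceding theorems, the direction $\nu^{h_0}\ll\mu$ is the one-sided absolute continuity of \cite[Theorem~1.2]{sarkar2021brownian}, so only $\mu\ll\nu^{h_0}$ needs proof. By $1{:}2{:}3$ scaling we take $t=1$, and by translation invariance of the Airy sheet we take $K=[y,y+r]$ with $y>0$. The plan is to split $h_0$ at the origin, $h_0=\max(h_0^-,h_0^+)$ with $h_0^-=h_0\cdot\delta_{(-\infty,0]}$ and $h_0^+=h_0\cdot\delta_{(0,\infty)}$. Each piece lies in $\mathscr{I}_1$ and has max-plus support bounded in one direction, so Theorem~\ref{thm: mut abs cont unbounded supp init data} applies to each. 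Since the variational formula \eqref{eq: variational formula} is max-linear,
\[
\mathfrak{h}_1(s,h_0)=\max\bigl(\mathfrak{h}_1(s,h_0^-),\mathfrak{h}_1(s,h_0^+)\bigr).
\]

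Using the full-space coupling \eqref{eq: airy sheet coupling full space}, we realise both $\mathfrak{h}(\cdot,h_0^\pm)$ through the single Airy line ensemble $\mathcal{A}$. For $x>0$ we use the semi-infinite last passage values $\mathcal{A}[x\to(0,\ell)]$. For $x\le 0$ we use the relation $\mathcal{S}(x,\cdot)-\mathcal{S}(x,y)$ expressed through $\mathcal{A}$ (via $\tilde{\mathcal{A}}$, or via the case $x>0$ after shifting the reference point to the left of $\supp{h_0}$). The aim is to write, on $[y,y+r]$,
\[
\mathfrak{h}(s)=\max_{\ell\ge1}\bigl(G_\ell+\mathcal{A}[(0,\ell)\to(s,1)]\bigr),
\qquad
G_\ell=\max\bigl(G^{-}_\ell,G^{+}_\ell\bigr),
\]
so that $\mathfrak{h}=WF_1$ with $F$ as in the proof of Theorem~\ref{thm: mut abs cont unbounded supp init data}. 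Then only two facts are needed: $G_1>G_2$ almost surely, and the lower curve $\max_{\ell\ge 2}\bigl(G_\ell+\mathcal{A}[(0,\ell)\to(\cdot,2)]\bigr)$ is continuous near $0$ with a finite random maximiser on compacts. Given these, the Gibbs resampling argument of Theorem~\ref{thm: mutual abs cont} runs verbatim. It resamples the top line on $(0,\varepsilon)$ and $(\varepsilon,r+1)$, conditions on $\sigma(B|_{[\varepsilon,r]})\lor\mathscr{F}''$, uses the monotone coupling Lemma~\ref{lemma: bridge monotonicity} to push $\mathcal{A}_1(\varepsilon)$ up with positive probability, and uses Lemma~\ref{lemma: support bb} to keep $W_0$ small. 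This makes the non-intersection event $E$ have positive conditional probability, so $\nu^{h_0}(A)=0$ forces $\mu(A)=0$.

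The finiteness and continuity of the truncated maximum follow from the parabolic shape bound \eqref{eq: airy shape bnds} together with the finitary condition. Exactly as in \eqref{eq: inf supp ef trunc}, far-away blocks $(m-1,m]$ on either side are dominated by the block nearest the origin. Within finitely many blocks, Lemma~\ref{lemma: infinite lpp cont at zero}, applied on each side (the left side by the reflection symmetry \eqref{eq: Airy sheet symmetry}), gives a finite $L_0$.

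\textbf{Main obstacle.} I expect the hard step to be the strict gap $G_1>G_2$ for the combined data. Proposition~\ref{prop: separation of initial data} was proved only for support in $(0,\infty)$. For negative starting points, the geodesics to $(0,\ell)$ in the line-ensemble picture are not given by the semi-infinite $x_k$ construction on the same side. My first attempt is to avoid the issue by moving the reference point. Given the compact $K$, choose a deterministic $z<\inf K$ and use the coupling based at a point $x_0<$ everything relevant. Alternatively, use translation invariance $\mathcal{S}(\cdot+c,\cdot+c)$ with random truncation: the mass of $h_0$ far to the left is negligible by the shape estimates and the finitary condition. This reduces to data supported in $(-M,\infty)$ for a random but $\mathscr{F}_-$-measurable $M$. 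Sectioning on $\{M=m\}$, as in the proof of Proposition~\ref{prop: local comparison KPZ random time}, then places each event within the scope of Theorem~\ref{thm: mut abs cont unbounded supp init data} after a deterministic shift. The key point to check is that this truncation is measurable with respect to data not resampled in the Gibbs step, so that the conditional positivity argument is unaffected.
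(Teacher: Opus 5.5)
\textbf{There is a genuine gap, and it is in the first structural step, not in the gap inequality $G_1>G_2$.} You cannot write the $x<0$ half as a reflection anchored at $0$ with $\mathscr{F}_-$-measurable boundary data. In \eqref{eq: airy sheet coupling full space}, $\mathcal{S}(x,\cdot)$ for $x<0$ is built from $\tilde{\mathcal{A}}$, i.e.\ from geodesics coming from the far right of $\mathcal{A}$. More decisively, any curve $\max_\ell(G_\ell+\mathcal{A}[(0,\ell)\to(\cdot,1)])$ has increments at least those of $\mathcal{A}_1=\mathcal{S}(0,\cdot)$. By \eqref{eq: Airy sheet monoton}, the increments of $\mathfrak{h}_1(\cdot,h_0^-)$ are at most those of $\mathcal{S}(0,\cdot)$. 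Your representation would therefore force $\mathfrak{h}_1(\cdot,h_0^-)$ to have exactly the increments of $\mathcal{A}_1$, which is not almost sure. So the combined $G_\ell=\max(G^-_\ell,G^+_\ell)$ does not exist.

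Your fallback does not repair this for data unbounded in both directions, e.g.\ flat $h_0\equiv 0$. There is no deterministic point to shift to, and the random truncation fails exactly at the check you flag. Whether $h_0$ on $(-\infty,-m]$ matters on $K$ depends on $\mathcal{S}(x,s)$ for $x\le -m$, $s\in K$. In any left-anchored coupling, shifted or not, these are reflected last passage values from the right, so they depend on $\mathcal{A}_1$ inside the window you resample. Hence $\{M=m\}$ is not measurable with respect to the conditioning $\sigma$-algebra, and the conditional positivity step collapses. A soft limit does not help either: Theorem~\ref{thm: mut abs cont unbounded supp init data} gives $\nu^{h_0\delta_{(-m,\infty)}}(A)>0$ for each $m$ with no uniform lower bound, and $\nu^{h_0\delta_{(-m,\infty)}}(A)\to\nu^{h_0}(A)$, so the argument is circular.

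The missing idea is to anchor the two halves on opposite sides of the window.
\begin{itemize}
\item The paper takes $K=[a,b]\subset(\varepsilon,y_0-\varepsilon)$. The $x>0$ half stays anchored at $0$, with boundary data $G^{h_0}_\ell$ that is $\mathscr{F}_-$-measurable.
\item The $x<0$ half is anchored at $y_0$ through $\tilde{\mathcal{A}}$, with boundary data $\tilde G^{h_0}_\ell$ measurable with respect to $\sigma(\mathcal{A}_i(x):x\ge y_0)$.
\item This gives $\mathfrak{h}_1=WF_1\lor WG_1$ on $[0,y_0]$: a left-to-right and a right-to-left Skorokhod reflection. Both lower curves $F_2,G_2$ are measurable with respect to $\mathscr{G}=\sigma(\mathcal{A}_i(x):x\le 0\text{ or }x\ge y_0)$. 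The strict gaps at $0$ and at $y_0$ come from Proposition~\ref{prop: separation of initial data} and flip symmetry, applied to each side separately.
\item On the event that the shifted top line clears $F_2$ on $[0,y_0-\varepsilon]$ and $G_2$ on $[a,y_0]$, both reflections are inactive. Then $\mathfrak{h}_1(s)=\mathcal{A}_1(s)+\max(G^{h_0}_1-\mathcal{A}_1(0),\tilde G^{h_0}_1-\mathcal{A}_1(y_0))$ on $[a,b]$, so the two halves never need to be compared.
\end{itemize}
Consequently the Gibbs step is not verbatim from Theorem~\ref{thm: mutual abs cont}. One resamples $\mathcal{A}_1$ on $(0,y_0)$ as a bridge--Brownian motion--bridge sandwich and raises $\mathcal{A}_1(a)$ and $\mathcal{A}_1(y_0-\varepsilon)$ jointly by stochastic domination, so that both barriers are avoided with positive conditional probability.
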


\begin{proof}

From \cite[Theorem~ 1.2]{sarkar2021brownian}, we have $\nu^{h_0}\ll \mu$. It thus remains to prove that $\mu \ll \nu^{h_0}$. 

As before, we lose no generality in assuming that $t=1$ and $h_0 \in \mathscr{C}(\R; \R)$. This is because of the metric composition law for the directed landscape \eqref{eq: metric comp} (cf. the proof of \cite[Theorem 1.2]{sarkar2021brownian}). It thus suffices to prove the mutual absolute continuity of the law of the increments of
\[
\mathfrak{h}_1(y, h_0) = \max_{x\in \Q\setminus\{0\}}(h_0(x) + \mathcal{S}(x, y))\,,\quad y \in \R\,,
\]
against the rate two Wiener measure on some compact $K$. By translation symmetries of the Airy sheet, we can also without loss of generality take $K \subset (\varepsilon , y_0-\varepsilon)$ for some $y_0> 0$ and $\varepsilon\in (0, y_0)$.

More precisely, let $\mathcal{A}:\N\times \R\to \R$ be an Airy line ensemble and set $\tilde{\mathcal{A}}_1(x) = \mathcal{A}_1(-x), x\in \R$. Then, by the coupling between the Airy sheet and Airy line ensemble \eqref{eq: airy sheet coupling full space}, we can express for all $y\in \Q_+$,
\begin{align*}
\max_{x \in \mathbb{R}} \left( h_0(x) + \mathcal{S}(x, y) \right) &=\bigg(\max_{x \in (0, \infty)} \max_{\ell \geq 1} \left( h_0(x) + \mathcal A[x \to (0, \ell)] \right) + \mathcal A[(0, \ell) \to (y, 1)]\bigg) \\
&\vee \bigg(\max_{x \in (-\infty, 0)} \max_{\ell \geq 1} \left( h_0(x) + \tilde{\mathcal{A}}[x \to (-y_0, \ell)] + \tilde{\mathcal{A}}[(-y_0, \ell)\to (-y, 1)] \right)\bigg)\,.
\end{align*}
We thus have the alternative semi-discrete variational characterisation for the KPZ fixed point
\begin{align*}
\mathfrak{h}_1(y, h_0)&=\left( \max_{\ell \geq 1} G_{\ell}^{h_0} + \mathcal A[(0, \ell) \to (y, 1)] \right)\\
& \vee \left( \max_{\ell \geq 1} \tilde{G}_{\ell}^{h_0} + \tilde{\mathcal{A}}[(-y_0, \ell) \to (-y, 1)] \right)\,,\quad y \in \Q\cap [0, y_0]\,,
\end{align*}
where for $\ell \ge 1$,
\[
G_{\ell}^{h_0} = \max_{x \in (0, \infty)} \left( h_0(x) + \lim_{k\to \infty}\mathcal{A}[x_k \to (0, \ell)] - \mathcal{A}[x_k \to (0, 1)] + \mathcal{S}(x, 0) \right)\]
and
\[
\tilde{G}_{\ell}^{h_0} = \max_{x \in (-\infty, 0)} \left( h_0(x) + \lim_{k\to \infty}\tilde{\mathcal{A}}[(-x)_k \to (-y_0, \ell)] - \tilde{\mathcal{A}}[(-x)_k \to (-y_0, 1)] + \mathcal{S}(x, y_0) \right)\,,\]
where $x_k = (-\sqrt{k/(2x)}, k)$ for $x> 0$. Note by the ergodic properties of the Airy line ensemble and the above coupling with the Airy sheet, in particular, we have from \eqref{eq: ergod airy sheet} that for all $\ell \ge 1$, $G_{\ell}^{h_0}$ are $\mathscr{F}_- = \sigma(\{\mathcal{A}_i(x): x\le 0\})$ measurable
and $\tilde{G}_{\ell}^{h_0}$ are $\mathscr{F}_{(y_0, \infty)} \equiv \sigma(\{\mathcal{A}_i(x): x\ge y_0\})$ measurable since $\mathcal{S}(x, 0)$, $x> 0$ and $\mathcal{S}(x, y_0)$, $x<  0$ are $\mathscr{F}_-$ and $\mathscr{F}_{(y_0, \infty)}$-measurable respectively (cf. \eqref{eq: ergod airy sheet}).

We can now express using the metric composition law, Lemma \eqref{Lemma: Metric Composition}, almost surely (recall the notation for the top line of a Pitman transform, \eqref{eq: pitmantrans}),
\begin{align}\label{eq: pitman melon fin init KPZ}
    \mathfrak{h}_1(y, h_0)&= WF_1 (y) \lor WG_1(y)\,,\quad y \in \Q\cap[0, y_0]\,,
\end{align}
with the environments
\[
F = (F_1, F_1) = \bigg(G^{h_0}_1+\mathcal{A}_1(\cdot)-\mathcal{A}_1(0),  \max_{\ell \geq 2} (G_{\ell}^{h_0} + \mathcal A[(0, \ell) \to (\cdot, 2)]\bigg) \,, \quad \mbox{ and}
\]

\[
G = (G_1, G_1) = \bigg(\tilde{G}^{h_0}_1+\mathcal{A}_1(\cdot)-\mathcal{A}_1(y_0),  \max_{\ell \geq 2} (\tilde{G}_{\ell}^{h_0} + \tilde{\mathcal{A}}[(-y_0, \ell) \to (-\cdot, 2)])\bigg) \,.
\]

One can show that these two environments are actually continuous. Indeed, by the shape estimates for the Airy sheet, \eqref{eq: airy shape bnds} one can show by arguing similarly as in the proof of Theorem~ \ref{thm: mut abs cont unbounded supp init data} leading up to \eqref{eq: inf supp ef trunc}, that there exists $m^*\in \N$ almost surely finite such that
\begin{equation}\label{eq: bdry 1}
    \max_{\ell \geq 2} (G_{\ell}^{h_0} + \mathcal A[(0, \ell) \to (\cdot, 2)] = \max_{\ell\ge 2} (G_{\ell}^{h_0, m^*} + \mathcal A[(0, \ell) \to (\cdot, 2)])\,,
\end{equation}
and
\begin{equation}\label{eq: bdry 2}
    \max_{\ell \geq 2} (\tilde{G}_{\ell}^{h_0} + \tilde{\mathcal{A}}[(-y_0, \ell) \to (-\cdot, 2)]) = \max_{\ell\ge 2} (\tilde{G}_{\ell}^{h_0, m^*} + \tilde{\mathcal{A}}[(-y_0, \ell) \to (-\cdot, 2)])
\end{equation}

where for $m\ge 1$,
\[
G_{\ell}^{h_0, m} = \max_{x \in (0, m)} \left( h_0(x) + \lim_{k\to \infty}\mathcal{A}[x_k \to (0, \ell)] - \mathcal{A}[x_k \to (0, 1)] + \mathcal{S}(x, 0) \right)\]
and
\[
\tilde{G}_{\ell}^{h_0, m} = \max_{x \in (-m, 0)} \left( h_0(x) + \lim_{k\to \infty}\tilde{\mathcal{A}}[(-x)_k \to (-y_0, \ell)] - \tilde{\mathcal{A}}[(-x)_k \to (-y_0, 1)] + \mathcal{S}(x, y_0) \right)\,,\]
then continuity follows from Lemma \ref{lemma: infinite lpp cont at zero}. Moreover, the boundary data \eqref{eq: bdry 1}, \eqref{eq: bdry 2} are both 
{
 \begin{equation}\label{eq: gbibbs sigma alg fin mut}
  \mathscr{G} \equiv \sigma(\{\mathcal{A}_i(x): i\ge1\,, x\le 0 \text{ or } x\ge y_0\})\,
 \end{equation}
 }
measurable (by simply taking the minimal $m  =m^*\in \N$ so that the maxima are attained). Observe that for all $\ell \ge 1$ we have that the boundary data $G^{h_0, m}_\ell, \tilde{G}^{h_0, m}_\ell$ are strictly monotone by Proposition \ref{prop: separation of initial data} and the flip symmetry of the Airy line ensemble.

For ease of notation, we set $K = [a, b]$ with $0< a< b < y_0-\varepsilon$. Now, suppose for some $A\subseteq \mathscr{C}_0([a, b]; \R)$ Borel that
$\PP(\mathfrak{h}_1(\cdot)-\mathfrak{h}_1(a)\in A) = 0$.
Then, we estimate by inclusion and \eqref{eq: pitman melon fin init KPZ},
\begin{align}\label{eq: prob 1}
    0= \PP\bigg(&\mathcal{A}_1(\cdot) -\mathcal{A}_1(a) \in A\,,G^{h_0}_1 + \mathcal{A}_1(s)- \mathcal{A}_1(0) \ge F_2(s)\,, s\in [0, y_0-\varepsilon]\,,\nonumber\\
    &\tilde{G}^{h_0}_1+\mathcal{A}_1(s)-\mathcal{A}_1(y_0)\ge G_2(s)\,, s\in [a, y_0]\bigg)\,.
\end{align}

Now, by the Brownian bridge property \ref{subsec: Airy line ensemble} applied to $\mathscr{G}=$~\eqref{eq: gbibbs sigma alg fin mut} and standard properties of Brownian bridges, we can resample the top line of the Airy line ensemble as
\[
\mathcal{A}_1(s) = \begin{cases}
    &W_1(s) + L_1(s)\,,\quad s\in [0, a]\\
    &W_2(s) + L_2(s)\,,\quad s\in [a, y_0-\varepsilon]\\
    &W_3(s) + L_3(s)\,,\quad s\in [y_0-\varepsilon, y_0]\,,
\end{cases}
\]
conditioned on the event that that latter does not hit $\mathcal{A}_2$ on $[0, y_0]$, where $W_1, W_2, W_3$ are three mutually independent (also independent from $\mathcal{A}$) rate two Brownian bridges vanishing at both endpoints and $L_1, L_2, L_3$ are three affine functions with
\[
L_1(0) = \mathcal{A}_1(0)\,, L_1(a) = L_2(a) = \mathcal{A}_1(a)\,,\]
\[
L_2(y_0-\varepsilon) = L_3(y_0-\varepsilon) = \mathcal{A}_1(y_0-\varepsilon)\,, L_3(y_0) = \mathcal{A}_1(y_0)\,,
\]
We can thus express \eqref{eq: prob 1} as
\begin{align*}
    0&= \PP\bigg (W_2(s) + L_2(s) - \mathcal{A}_1(a) \in A\,,\\
    &G^{h_0}_1 + W_1(s) + L_1(s)- \mathcal{A}_1(0) \ge F_2(s)\,, s\in [0, a]\,, \\
    &W_1(s) + L_1(s)\ge \mathcal{A}_2(s)\,, s\in [0, a]\,,\\
    &G^{h_0}_1 + W_2(s) + L_2(s)- \mathcal{A}_1(0) \ge F_2(s)\,, s\in [a, y_0-\varepsilon]\,,\\
    &\tilde{G}^{h_0}_1+W_2(s) + L_2(s)-\mathcal{A}_1(y_0)\ge G_2(s)\,, s\in [a, y_0-\varepsilon])\\
    &W_2(s) + L_2(s)\ge \mathcal{A}_2(s)\,, s\in [a, y_0-\varepsilon]\,,\\
    &\tilde{G}^{h_0}_1+W_3(s) + L_3(s)-\mathcal{A}_1(y_0)\ge G_2(s)\,, s\in [y_0-\varepsilon, y_0])\\
    &W_3(s) + L_3(s)\ge \mathcal{A}_2(s)\,, s\in [y_0-\varepsilon, y_0]\bigg)\,.
\end{align*}

\begin{figure}
  \centering
       \includegraphics[width = 0.5\textwidth]{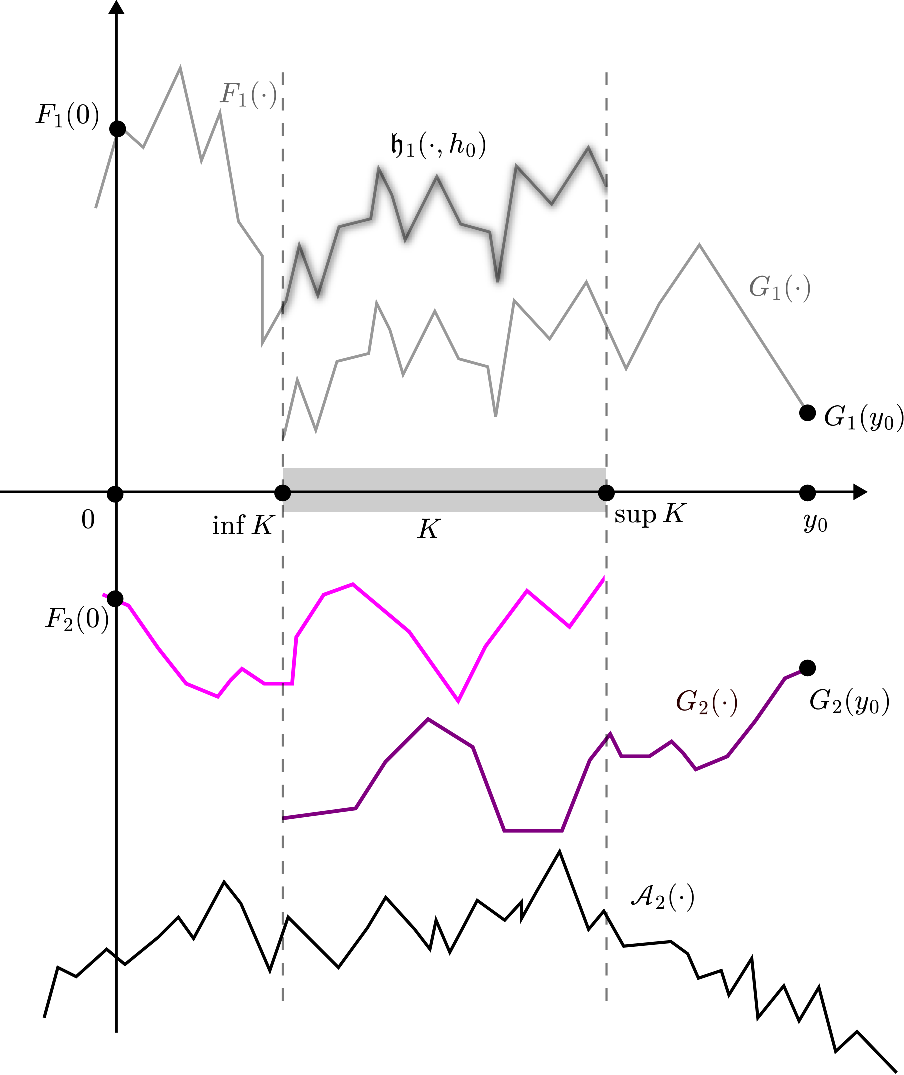}
       \caption{Illustration of the KPZ fixed point at unit time, $\mathfrak{h}_1(\cdot, h_0)$ on the compact interval $[0, y_0]$ in Theorem~\ref{thm: mut abs cont finitary init data} on the event the top lines of the Pitman transforms in \eqref{eq: pitman melon fin init KPZ}, $F_1(\cdot) \lor G_1(\cdot)$ does not hit $F_2(\cdot) \lor G_2(\cdot)$. On this event, $\mathfrak{h}_1(\cdot, h_0)$ is exactly equal to the Airy$_2$ process up to a height shift that is $\mathscr{G}$-measurable (recall the notation from Theorem~ \ref{thm: mut abs cont finitary init data}.
       Moreover, the `lower barrier' $F_2(\cdot) \lor G_2(\cdot)$ is $\mathscr{G}$-measurable. Using the Brownian Gibbs property, one can represent the top line of the Airy line ensemble (up to mutual absolute continuity) as a `Brownian bridge sandwich' (conditioned to avoid $\mathcal{A}_2$), that is a Brownian bridge starting from $0$ concatenated to a Brownian motion starting from $a$ which itself is concatenated to another Brownian bridge starting from $b$ and ending at $y_0$.}
       \label{fig: KPZ finitary melon}
   \end{figure} 

Since $b < y_0-\varepsilon$, by \cite[Lemma 3.14]{tassopoulos2025quantitativebrownianregularitykpz} the law of $W_2+L_2$ on $[a, b]$ is mutually absolutely continuous with respect to $B_2+\mathcal{A}_1(a)$ where $B_2$ is a rate two Brownian motion starting from $(a, 0)$ (independent from all the other randomness). Again by standard Brownian bridge properties, we can thus decompose 
\[
W_2+L_2(s) = \begin{cases}
    &B_2(s) + \mathcal{A}_1(a)\,,\quad s\in [a , b]\\
    &W'_2(s) + L'_2(s)\,,\quad s\in [b , y_0-\varepsilon]
\end{cases}
\]
where $W'_2$ is a rate two Brownian bridges vanishing at both endpoints (independent from all other randomness) and $L'_2$ is an affine function with $L'_2(b) = \mathcal{A}_1(a)+B_2(b)$ and $L'_2(y_0-\varepsilon) = \mathcal{A}_1(y_0-\varepsilon)$. We can now write \eqref{eq: prob 1} as
\begin{align*}
    0&= \PP\bigg (B_2(\cdot) \in A\,,B_2(s)+\mathcal{A}_1(a)\ge \mathcal{A}_2(s)\,, s\in [a, b]\,,\\
    &G^{h_0}_1 + W_1(s) + L_1(s)- \mathcal{A}_1(0) \ge F_2(s)\,, s\in [0, a]\,, \\
    &W_1(s) + L_1(s)\ge \mathcal{A}_2(s)\,, s\in [0, a]\,,\\
    &G^{h_0}_1 + B_2(s) + \mathcal{A}_1(a) - \mathcal{A}_1(0) \ge F_2(s)\,, s\in [a, b]\,,\\
    &G^{h_0}_1 + W'_2(s) + L'_2(s)- \mathcal{A}_1(0) \ge F_2(s)\,, s\in [b,y_0-\varepsilon]\,,\\
    &\tilde{G}^{h_0}_1+W_2(s) + L_2(s)-\mathcal{A}_1(y_0)\ge G_2(s)\,, s\in [a, y_0-\varepsilon]\\
    &W'_2(s) + L'_2(s)\ge \mathcal{A}_2(s)\,, s\in [b, y_0-\varepsilon]\,,\\
    &\tilde{G}^{h_0}_1+W_3(s) + L_3(s)-\mathcal{A}_1(y_0)\ge G_2(s)\,, s\in [y_0-\varepsilon, y_0])\\
    &W_3(s) + L_3(s)\ge \mathcal{A}_2(s)\,, s\in [y_0-\varepsilon, y_0]\bigg)\,.
\end{align*}

Since $L_1, L_2, L'_2, L_3$ are affine, their global minima on any interval are the respective minima of values at the endpoints thereof. 

Hence, we have by inclusion
\begin{align*}
    0&= \PP\bigg (B_2(\cdot) \in A\,,B_2(s)+\mathcal{A}_1(a)\ge \mathcal{A}_2(s)\,, s\in [a, b]\,,\\
    &G^{h_0}_1 + W_1(s) + L_1(s)- \mathcal{A}_1(0) \ge F_2(s)\,, s\in [0, a]\,, \\
    &W_1(s) + L_1(s)\ge \mathcal{A}_2(s)\,, s\in [0, a]\,,\\
    &G^{h_0}_1 + B_2(s) + \mathcal{A}_1(a) - \mathcal{A}_1(0) \ge F_2(s)\,, s\in [a, b]\,,\\
    &G^{h_0}_1 + W'_2(s) + B_2(b)+\mathcal{A}_1(a)\land \mathcal{A}_1(y_0-\varepsilon)- \mathcal{A}_1(0) \ge F_2(s)\,, s\in [b,y_0-\varepsilon]\,,\\
    &\tilde{G}^{h_0}_1+W_2(s) + \mathcal{A}_1(a)\land \mathcal{A}_1(y_0-\varepsilon)-\mathcal{A}_1(y_0)\ge G_2(s)\,, s\in [a, y_0-\varepsilon]\,,\\
    &W'_2(s) + B_2(b)+\mathcal{A}_1(a)\land \mathcal{A}_1(y_0-\varepsilon)\ge \mathcal{A}_2(s)\,, s\in [b, y_0-\varepsilon]\,,\\
    &\tilde{G}^{h_0}_1+W_3(s) + L_3(s)-\mathcal{A}_1(y_0)\ge G_2(s)\,, s\in [y_0-\varepsilon, y_0])\\
    &W_3(s) + L_3(s)\ge \mathcal{A}_2(s)\,, s\in [y_0-\varepsilon, y_0]\bigg)\,.
\end{align*}

Thus, by similar stochastic domination arguments as in the proof of Theorem~ \ref{thm: mutual abs cont} (see Figure \ref{fig: KPZ finitary melon} for an illustration), this time applied to $\mathcal{A}_1(a)$ and $\mathcal{A}_1(y_0-\varepsilon)$ jointly (conditioning on $\mathscr{G}$ using the Brownian Gibbs property \ref{subsec: Airy line ensemble}), we obtain
$\PP(B_2(\cdot) \in A)=0$
concluding the proof.
\end{proof}

We now record the following corollary that will be useful later.

\begin{corollary}\label{cor: KPZ fixed point mut abs cont prod}
    For $t> 0, a\in \R, K\subset \R$ with $a < \inf K$ bounded and $h_0 \in \mathscr{I}_t$, we have that there exists some probability measure on $\R$, $\eta$, such that
\[
\eta \otimes \mu\ll \mathrm{ Law \, of }\;(\mathfrak{h}_t(a, h_0), \mathfrak{h}_t(\cdot, h_0)-\mathfrak{h}_t(a, h_0)) \ll \mathrm{Leb} \otimes \mu\,,
\]
where $\mu$ denotes the law of a rate two Brownian motion starting at $(a, 0)$, restricted to $K$ and $\mathrm{Leb}$ the Lebesgue measure on $\R$.
\end{corollary}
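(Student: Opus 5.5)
The plan is to prove the two inequalities separately, and to get both from the proof of Theorem~\ref{thm: mut abs cont finitary init data} by keeping track of the height $\mathfrak{h}_t(a,h_0)$ as well as the increments. Throughout I take $t=1$ by $1:2:3$ scaling, and I take $[a,\sup K]\subset(0,y_0-\varepsilon)$ by translation invariance, exactly as in that proof.

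\emph{Upper bound.} Write the law of the pair as a mixture over the height, so that it is $\int \PP(\mathfrak{h}_1(a)\in \diff z)\,\nu_z$, where $\nu_z$ is the conditional law of the increments given $\mathfrak{h}_1(a)=z$. The goal is to show that the joint law is dominated by $\mathrm{Leb}\otimes\mu$. For this I would use the semi-discrete representation \eqref{eq: pitman melon fin init KPZ} together with the Brownian Gibbs property relative to $\mathscr{G}$ in \eqref{eq: gbibbs sigma alg fin mut}. Conditionally on $\mathscr{G}$, the top line $\mathcal{A}_1$ on $[0,y_0]$ is a Brownian bridge conditioned to avoid $\mathcal{A}_2$. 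Its value $\mathcal{A}_1(a)$ therefore has a conditional Lebesgue density. Given $\mathcal{A}_1(a)$, the path on $[a,\sup K]$ is absolutely continuous with respect to Brownian motion by Lemma~\ref{lemma: bb comparison lemma}, because Radon--Nikodym derivatives persist under conditioning on a positive-probability non-intersection event. This gives a joint density of $(\mathcal{A}_1(a),\mathcal{A}_1(\cdot)-\mathcal{A}_1(a))$ with respect to $\mathrm{Leb}\otimes\mu$.

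The KPZ fixed point is $WF_1\lor WG_1$, which is a functional of $\mathcal{A}_1$ and of $\mathscr{G}$-measurable barriers. To pass from $\mathcal{A}_1$ to $\mathfrak{h}_1$, I would instead use the argument of \cite[Theorem 1.2]{sarkar2021brownian}. That argument goes through the patchwork/Pitman structure: it writes the increments as those of a Brownian-like line plus a nondecreasing correction, and it already yields $\nu^{h_0}\ll\mu$. I would check that it extends to the pair, because the height $\mathfrak{h}_1(a)$ is $\mathcal{A}_1(a)$ plus a $\mathscr{G}\vee\sigma(\mathcal{A}_1|_{[0,a]})$-measurable shift. \textbf{This is the main obstacle}: one must show the height has a density jointly with the increments, and not merely marginally. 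My first attempt would be to condition on everything except $\mathcal{A}_1$ on a small window $(a-\delta,a+\delta)$ and use the bridge's Lebesgue-density midpoint.

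\emph{Lower bound.} I would take $\eta$ to be the law of $\mathfrak{h}_1(a)$ restricted (and renormalised) to the positive-probability event $E$ built in the proof of Theorem~\ref{thm: mut abs cont finitary init data}. On $E$, the function $\mathfrak{h}_1$ equals $\mathcal{A}_1$ up to a $\mathscr{G}$-measurable shift. Concretely, I would take $\eta$ to be the law of $H\equiv G^{h_0}_1+\mathcal{A}_1(a)-\mathcal{A}_1(0)$ under $\PP(\cdot\,|\,E)$. Now suppose $\PP\bigl((\mathfrak{h}_1(a),\mathfrak{h}_1(\cdot)-\mathfrak{h}_1(a))\in C\bigr)=0$. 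Following that proof, this gives $\PP\bigl((H,B_2)\in C,\,E\bigr)=0$, where $B_2$ is the Brownian motion from the resampling. Before the barrier constraints on $[a,b]$ are imposed, $B_2$ is independent of $H$ and of the data generating the remaining constraints. So I would condition on $\sigma(B_2)\vee\sigma(H)$ and apply the same stochastic-domination and small-ball argument, which gives $\PP(E\mid B_2,H)>0$ almost surely. It then follows that $(\mathrm{Law}(H)\otimes\mu)(C)=0$. The remaining step is to verify that the positivity of $\PP(E\mid B_2,H)$ is uniform enough after $H$ is fixed, which means adjusting $\mathcal{A}_1(y_0-\varepsilon)$ and $\mathcal{A}_1(0)$ rather than $\mathcal{A}_1(a)$. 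I would therefore take $\eta=\mathrm{Law}(H)$, a product-compatible choice, and conclude.
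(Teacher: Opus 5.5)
\paragraph{Upper bound.} Your upper bound has a genuine gap at exactly the point you flag as the main obstacle, and the structural fact you plan to rely on is false in the setting you use.

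In the two-sided representation \eqref{eq: pitman melon fin init KPZ}, the branch $WG_1(a)$ contains the reflection term $\max_{s\in[a,y_0]}\bigl(G_2(s)-G_1(s)\bigr)\lor 0$. So $\mathfrak{h}_1(a)$ depends on $\mathcal{A}_1$ on $[a,y_0]\supseteq K$, which is the same randomness that produces the increments. In general it is not $\mathcal{A}_1(a)$ plus a $\mathscr{G}\vee\sigma(\mathcal{A}_1|_{[0,a]})$-measurable shift; that holds only when the $G$-branch is absent.

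The small-window idea does not repair this. Given everything outside $(a-\delta,a+\delta)$, both $\mathfrak{h}_1(a)$ and $\mathfrak{h}_1|_K$ remain non-smooth functionals of the window path, through running maxima of $F_2-F_1$. The bridge's midpoint density does not give a joint density for them.

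The missing idea, which the paper uses, is to make your claim true by reducing to one-sided data:
\begin{enumerate}
\item \emph{Localise.} By the shape bounds \eqref{eq: airy shape bnds}, almost surely the maximiser in \eqref{eq: variational formula} lies in $[-n,n]$ for all $y\in[a,\sup K]$ once $n$ is large. Hence the probability that the pair lies in $A$ is at most the sum over $n$ of the same probabilities for the data $h_0\delta_{[-n,n]}$.
\item \emph{Move the support to one side.} Using \eqref{eq: Airy sheet symmetry}, you may take $a=0$ and the support in $(0,\infty)$. The shear only adds a deterministic constant to the height and a linear drift to the increments, both harmless for $\mathrm{Leb}\otimes\mu$.
\item \emph{Read off the height.} Now only one Pitman branch is present: $\mathfrak{h}_1(y)=\max_{1\le\ell\le L_0}(G_\ell+\mathcal{A}[(0,\ell)\to(y,1)])$ on $[0,\sup K+1]$. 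The height is $\mathfrak{h}_1(0)=G_1$, which is $\mathscr{F}_-$-measurable with law $\ll\mathrm{Leb}$.
\item \emph{Control the increments conditionally.} Conditionally on $\mathscr{F}_-$, apply the Brownian Gibbs property and \cite[Theorem 7.9]{tassopoulos2025inhomogeneousbrownian} to the iterated Skorokhod reflections with the $\mathscr{F}_-$-measurable boundary data $(G_\ell)_{\ell\le m}$. This shows the increments of $\max_{\ell\le m}(G_\ell+\mathcal{A}[(0,\ell)\to(\cdot,1)])$ have conditional law $\ll\mu$ on $K$.
\item \emph{Conclude.} Sectioning on $\{L_0\le m\}$ and applying Fubini gives domination by $\mathrm{Leb}\otimes\mu$.
\end{enumerate}

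\paragraph{Lower bound.} Your lower bound follows the paper's route: rerun the proof of Theorem~\ref{thm: mut abs cont finitary init data} on $[a,\sup K]$ while tracking the height. The paper likewise only asserts this ``by inspection''. Two points in your version need correcting.

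First, on $E$ one has $\mathfrak{h}_1=F_1\lor G_1$. So the height is $\mathcal{A}_1(a)+\max\bigl(G^{h_0}_1-\mathcal{A}_1(0),\,\tilde G^{h_0}_1-\mathcal{A}_1(y_0)\bigr)$, not $H$.

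Second, and more seriously, $\PP(E\mid B_2,H)>0$ does not follow from ``the same'' argument. There, positivity comes from pushing $\mathcal{A}_1(a)$ up by stochastic domination, and that move is unavailable once the height is fixed. Your proposed substitutes do not act on the relevant constraint: $\mathcal{A}_1(0)$ is $\mathscr{G}$-measurable, and $\mathcal{A}_1(y_0-\varepsilon)$ does not enter the constraint $H+B_2\ge F_2$ on $[a,b]$.

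In fact, with $\mathscr{G}$ and the lower lines frozen, $E$ forces the height $w$ and increment path $f$ to satisfy $w\ge\max_{[a,b]}(F_2-f)$, up to an additive constant determined by the frozen data. Since $\max_{[a,b]}(-f)$ is unbounded under $\mu$, this excludes a set of positive $\eta\otimes\mu$-measure for every probability measure $\eta$.

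So the lower bound needs an extra input that uses the randomness of the boundary data and lower lines, pushing the barrier $F_2$ below any prescribed level with positive conditional probability. The step you call ``remaining'' is precisely this, and it is not supplied.
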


\begin{proof}
    First, by translation symmetries of the Airy sheet \eqref{eq: Airy sheet symmetry}, we can take both $a, \inf K > 0$, with $B$ a rate two Brownian motion starting at $(a, 0)$ (and independent from $\mathfrak{h}(a)$). By the metric composition law for the directed landscape, \eqref{eq: metric comp} and independence, we can without loss of generality take $h_0$ to be continuous. 

    Inspecting the proof of Theorem~ \ref{thm: mut abs cont finitary init data} with $[a, \sup K ]$ in place of $K$ and conditioning on 
    \[
    \mathscr{G} \equiv \sigma(\{\mathcal{A}_i(x): (i, x) \not\in \{2, \ldots\}\times \R\cup \{1\}\times(0, \sup K +1)\})
    \]
    instead of \eqref{eq: gbibbs sigma alg fin mut} we obtain with $\mathfrak{h}_t(\cdot) \equiv \mathfrak{h}_t(\cdot, h_0)$,
    $\eta \otimes \mu\ll \mathrm{ Law \, of }\;(\mathfrak{h}_t(a), \mathfrak{h}_t(\cdot)-\mathfrak{h}_t(a))$,
    for some probability measure on $\R$, $\eta$.
    
    For the converse absolute continuity relation, observe that by a localisation argument for the support of $h_0$ (using the shape bounds for the Airy sheet \eqref{eq: airy shape bnds}), for any Borel $A$,
    \[
    \PP((\mathfrak{h}_t(a), \mathfrak{h}_t(\cdot)-\mathfrak{h}_t(a))\in A)\le \sum_{n=1}^\infty \PP((\mathfrak{h}^n_t(a), \mathfrak{h}^n_t(\cdot)-\mathfrak{h}^n_t(a))\in A)\,,
    \]
    where $\mathfrak{h}^n_t$ denotes the KPZ fixed point started from initial data $h_0\cdot \delta_{[-n, n]}$ (recall \eqref{eq: max-plus indicator}). Now, for any fixed $n$, by the translation symmetries of the Airy sheet \eqref{eq: Airy sheet symmetry}, it suffices to prove that
    \[
     \mathrm{ Law \, of }\;(\mathfrak{h}^n_t(a), \mathfrak{h}^n_t(\cdot)-\mathfrak{h}^n_t(a))
     \ll \mathrm{Leb} \otimes \mu \,,
    \]
    for $a = 0$,  $K \subseteq (0, \infty)$ compact. In this case, the coupling \ref{eq: airy sheet coupling full space} between the Airy sheet and Airy line ensemble gives
    \[
    \mathfrak{h}_t^n (y) = \max_{\substack{1\le \ell\le L_0 }} (G_\ell + \mathcal{A}[(0,\ell)\to (y, 1)])\,,\quad y \in [0, \sup K+1]\,,
    \]
    for some almost surely finite random $L_0$. 
    
    It thus suffices to show that the laws of
    \begin{equation}\label{e:abstoshows}
        (G_1, \max_{\substack{1\le \ell\le m }} (G_\ell + \mathcal{A}[(0,\ell)\to (\cdot, 1)]) \ll \mathrm{Leb}\otimes \mu 
    \end{equation}
    on $\mathscr{C}(K; \R)$ for any $m\ge 1$, by a similar localisation argument as above, sectioning on events $\{L_0\le m \}$. Clearly $G_1 = \max(h_0(x) + \mathcal{S}(x, 0)) = \mathfrak{h}_1(0)\ll \mathrm{Leb}$. Now \eqref{e:abstoshows} follows from the Brownian Gibbs property, \ref{subsec: Airy line ensemble} conditioning on
    $\mathscr{F}_- = \sigma(\{\mathcal{A}_i(x): x\le 0\})$,
    using the fact that $G_\ell$ are $\mathscr{F}_-$ measurable and then applying \cite[Theorem~ 7.9]{tassopoulos2025inhomogeneousbrownian} to the iterated Skorokhod reflections (cf. \ref{sec: prelims}) with boundary data $(G_\ell)_{1\le \ell \le m}$ on $K$. 
\end{proof}

The mutual absolute continuity of the increments of the KPZ fixed point against Brownian motion on compacts gives the former has `full topological support'. This is the content of the following corollary.

\begin{corollary}\label{cor: support kpz}
    Let $K\subseteq\R$ be a bounded interval, $t>0$, $h_0\in \mathscr{I}_t$ and $f\in \mathscr{C}_0(K; \R)$. Then for any $\varepsilon>0$, we have
    $\PP(\norm{f-\mathfrak{h}_t(\cdot, h_0)+\mathfrak{h}_t(\inf K, h_0)}_{L^\infty(K)} < \varepsilon) > 0$.
\end{corollary}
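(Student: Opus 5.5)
The corollary follows directly from the mutual absolute continuity in Theorem~\ref{thm: mut abs cont finitary init data} together with the full topological support of Brownian motion. Write $K$ with endpoints $a=\inf K<b=\sup K$. If $a=b$, the statement is trivial, so assume $a<b$. Since the increment process is continuous, we may replace $K$ by its closure $[a,b]$: the sup norm over $K$ equals the sup norm over $[a,b]$ once $f$ is extended continuously, which is possible because $f\in\mathscr{C}_0(K;\R)$ is uniformly continuous on the bounded set $K$. Define the open set
\[
U=\{g\in\mathscr{C}_0([a,b];\R):\norm{f-g}_{L^\infty([a,b])}<\varepsilon\},
\]
which is open in the topology of uniform convergence. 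It is in particular Borel, so the absolute continuity relations apply to it.

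The first step is to show that $\mu(U)>0$, where $\mu$ is the law of a rate two Brownian motion started from $(a,0)$ on $[a,b]$. This follows from Lemma~\ref{lemma: support bb} combined with Lemma~\ref{lemma: bb comparison lemma}, as noted in the remark after Lemma~\ref{lemma: support bb}.

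If one wants to spell that out, the argument runs as follows.
\begin{itemize}
\item Embed $[a,b]$ in the larger interval $[a,b+1]$.
\item Extend $f$ to a continuous function on $[a,b+1]$ that vanishes at $b+1$, for instance by linear interpolation to $0$.
\item Lemma~\ref{lemma: support bb} (after an affine time change and rescaling, which preserve uniform neighbourhoods) shows that a Brownian bridge on $[a,b+1]$ pinned at $0$ at both ends lies in the $\varepsilon$-tube around this extension with positive probability. In particular, its restriction to $[a,b]$ lies in $U$ with positive probability.
\item Lemma~\ref{lemma: bb comparison lemma} states that the law of this restriction is mutually absolutely continuous with respect to $\mu$, so $\mu(U)>0$.
\end{itemize}

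The second step applies Theorem~\ref{thm: mut abs cont finitary init data}, which gives $\mu\ll\nu^{h_0}$ on $\mathscr{C}_0([a,b];\R)$. Here $\nu^{h_0}$ is the law of $\mathfrak{h}_t(\cdot,h_0)-\mathfrak{h}_t(a,h_0)$. By contraposition, $\mu(U)>0$ forces $\nu^{h_0}(U)>0$, and this is exactly the claimed probability bound.

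The only point requiring care is the bookkeeping in the first step. One has to pass from the bridge support lemma to the Brownian motion, which is why we pad the interval so that $b$ lies strictly inside the domain of the bridge, and one has to check that the rescalings preserve the uniform-topology neighbourhoods. Both are routine, so no genuine obstacle is expected.
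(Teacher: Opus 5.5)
Your proposal is correct and follows the paper's own argument. Lemmas~\ref{lemma: support bb} and~\ref{lemma: bb comparison lemma} give $\mu(U)>0$ for the uniform $\varepsilon$-tube $U$ around $f$, and $\mu\ll\nu^{h_0}$ from Theorem~\ref{thm: mut abs cont finitary init data} transfers this to the KPZ increments. One small inaccuracy: a continuous function on a bounded non-closed interval need not be uniformly continuous (e.g.\ one oscillating like $\sin(1/(b-x))$ near $b$, for which the statement itself fails when $\varepsilon\le 1$), so your reduction to $[a,b]$ should rest on reading $K$ as compact, as the paper's notation $\mathscr{C}_0(K;\R)$ intends, rather than on automatic uniform continuity.
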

\begin{proof}
    The proof of both parts follows from the mutual absolute continuity relation in Theorem \ref{thm: mut abs cont finitary init data} and Lemmas \ref{lemma: bb comparison lemma} and Lemma \ref{lemma: support bb}.
\end{proof}

\section{The Airy Sheet and additive Brownian motion}\label{sec: top supp airy sheet}

In this section, we prove mutual absolute continuity of additive Brownian motion to the Airy sheet on compacts. 

\begin{figure}[H]
    \centering
    \includegraphics[width=\linewidth]{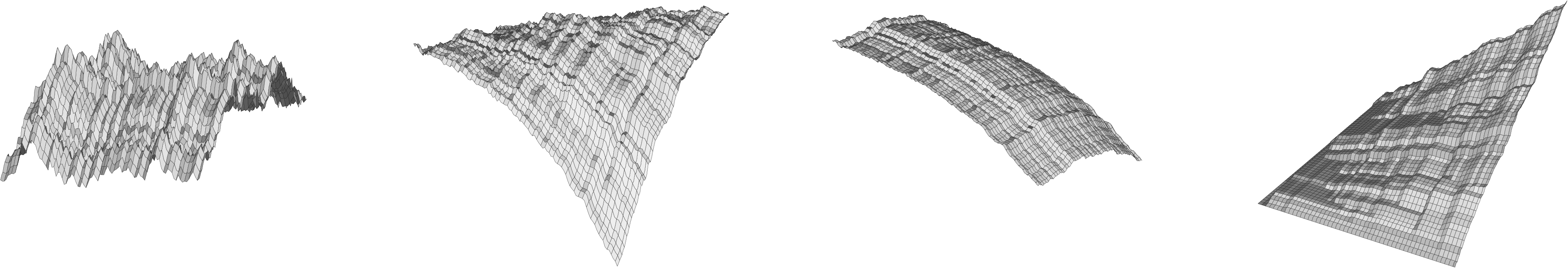}
    \caption{Illustrations of: \textbf{left}: Additive Brownian motion, \textbf{centre-left}: $\mathcal{S}(\cdot, \cdot')$, \textbf{centre-right}: $\mathcal{S}(\cdot, 0) + \mathcal{S}(0, \cdot')$ and \textbf{right}: $\mathcal{S}(\cdot, \cdot') -\mathcal{S}(\cdot, 0) - \mathcal{S}(0, \cdot')+\mathcal{S}(0, 0)$ on $[0, 1]^2$.}
    \label{fig: airy sheet}
\end{figure}

Observe the quadrangle inequality \ref{eq: Airy sheet monoton}
\begin{equation*}
    \mathcal{S}(x,y)+\mathcal{S}(x',y')\ge \mathcal{S}(x,y')+\mathcal{S}(x',y)\,,\quad x\le x',\; y\le y' 
\end{equation*}
gives for any $x_0, y_0$, the random continuous function
\[
    \mathcal{S}(x,y)-\mathcal{S}(x, y_0) - \mathcal{S}(x_0, y) + \mathcal{S}(x_0, y_0)\,,\quad x\ge x_0\,, y\ge y_0
    \]
    is monotone in $x, y$ and is thus the cumulative distribution function of a random Borel measure $\mu_{x_0, y_0}$ on $[x_0, \infty)\times [y_0, \infty)$ (cf. \textbf{right} in Figure \ref{fig: airy sheet}). We can thus write
  \begin{equation}\label{eq: airy diff}
       \mu_{x_0, y_0}([x_0, x]\times [y_0, y]) = \mathcal{S}(x,y)-\mathcal{S}(x, y_0) - \mathcal{S}(x_0, y) + \mathcal{S}(x_0, y_0)\,.
  \end{equation}
  We start with a lemma that states with probability strictly between zero and one, the quadrangle inequality \eqref{eq: Airy sheet monoton},
becomes an equality on compacts (cf. \textbf{centre-right} in Figure \ref{fig: airy sheet}), or in other words the measures $\mu_{x_0, y_0}$ are not degenerate but can vanish on compacts with positive probability. 

\begin{proposition}\label{prop: Airy sheet quadrangle equality}
    Fix $M>0$, then we have 
    \begin{equation}\label{eq: prob zero one}
    \PP(\mathcal{S}(x, y) = \mathcal{S}(x, -M) + \mathcal{S}(-M, y) - \mathcal{S}(-M, -M)\,,\text{ for all } |x|+ |y|\le M)>0\,.
    \end{equation}
    Moreover,
     \begin{equation}\label{eq: conv to zero prob}
        \lim_{m\to \infty}\PP(\mathcal{S}(x, y) = \mathcal{S}(x, -M) + \mathcal{S}(-M, y) - \mathcal{S}(-M, -M)\,,\text{ for all } |x|+ |y|\le M) = 0\,.
    \end{equation}
\end{proposition}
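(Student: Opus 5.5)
The plan for \eqref{eq: prob zero one} is to realise the quadrangle equality as the event that, in the Airy line ensemble coupling \eqref{eq: airy sheet coupling full space}, the top line alone carries every geodesic over the relevant window. First, by translation invariance $\mathcal{S}(\cdot+c,\cdot+c)\stackrel{\mathrm{d}}{=}\mathcal{S}$ with $c=M+1$, I move the square $[-M,M]^2\supseteq\{|x|+|y|\le M\}$ to $[1,2M+1]^2$. The reference point then becomes $x_0=y_0=1$. For $x>0$ the coupling and the metric composition law \eqref{eq: composition} give
\[
\mathcal{S}(x,y)-\mathcal{S}(x,y_0)=\max_{\ell\ge1}\big(G_\ell(x)+\mathcal{A}[(y_0,\ell)\to(y,1)]\big)-G_1(x),\qquad G_\ell(x)=\mathcal{A}[x\to(y_0,\ell)],
\]
with $G_\ell(x)$ as in \eqref{eq: semi-inf lpp airy} (based at $y_0$), so $G_1(x)=\mathcal{S}(x,y_0)$. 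The maximum is attained at $\ell=1$ for all $y\in[y_0,2M+1]$ and all $x\in[1,2M+1]$ precisely on the event
\[
E=\Big\{\mathcal{A}_1(y)-\mathcal{A}_1(y_0)\ge \Phi(y)\ \ \forall y\in[y_0,2M+1]\Big\},\qquad \Phi(y)=\sup_{x\in[1,2M+1]}\max_{\ell\ge2}\big(G_\ell(x)-G_1(x)+\mathcal{A}[(y_0,\ell)\to(y,2)]\big).
\]
On $E$ we get $\mathcal{S}(x,y)-\mathcal{S}(x,y_0)=\mathcal{A}_1(y)-\mathcal{A}_1(y_0)$, independently of $x$. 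Comparing with the same identity at $x=x_0$ yields exactly the quadrangle equality. So it suffices to show $\PP(E)>0$.

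To control $\Phi$, I will argue as in the proof of Proposition \ref{prop: separation of initial data}. By ordering of geodesics, for $x\le\beta:=2M+1$ the differences $G_\ell(x)-G_1(x)+\mathcal{A}[(y_0,\ell)\to(y,2)]$ are dominated by the corresponding quantity built from the jump time $\varepsilon^\infty_{\beta,2}<y_0$ of the semi-infinite geodesic from $\beta$. Lemma \ref{lemma: infinite lpp cont at zero} then reduces the maximum over $\ell$ to finitely many levels uniformly in $y$. This yields a continuous function $\Phi$ with $\Phi(y_0)\le\mathcal{A}[(\varepsilon^\infty_{\beta,2},2)\to(y_0,2)]-\mathcal{A}[(\varepsilon^\infty_{\beta,2},2)\to(y_0,1)]<0$. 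Moreover $\Phi$ is measurable with respect to $\mathscr{F}_-$ (based at $y_0$) together with the lines $\mathcal{A}_{i}$, $i\ge2$.

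Next I condition on $\mathscr{F}=\sigma(\mathcal{A}_i(s):(i,s)\notin\{1\}\times(y_0,2M+2))$ and apply the Brownian Gibbs property. Conditionally, $\mathcal{A}_1$ on $[y_0,2M+2]$ is a Brownian bridge conditioned to stay above $\mathcal{A}_2$, and $\Phi$ and $\mathcal{A}_2$ are fixed continuous functions. Since $\Phi(y_0)<0$, there is a continuous path from $0$ that stays strictly above $(\Phi\vee(\mathcal{A}_2-\mathcal{A}_1(y_0)))$ on $[y_0,2M+1]$ and ends at $\mathcal{A}_1(2M+2)-\mathcal{A}_1(y_0)$. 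Lemma \ref{lemma: support bb}, after removing the conditioning, which only costs a positive factor, then gives $\PP(E\mid\mathscr{F})>0$ almost surely, hence $\PP(E)>0$.

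The main obstacle is making the uniform-in-$x$ domination by $\beta$ and the $\mathscr{F}$-measurability of $\Phi$ rigorous. In particular the limits defining $G_\ell(x)$ must stabilise simultaneously for countably many $x$; by continuity this suffices.

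For \eqref{eq: conv to zero prob} (reading the limit as $M\to\infty$), I will use the shape bound \eqref{eq: airy shape bnds}. On the event, the differences $D(y)=\mathcal{S}(M,y)-\mathcal{S}(-M,y)$ are constant on $y\in[-M,0]$, since such points satisfy $|x|+|y|\le M$ only for $x=0$. I will therefore apply the relation with $x=0$ against the base $x=-M$ on $y\in[-M,0]$. Writing $\mathcal{S}(x,y)=-(x-y)^2+R(x,y)$, the constancy of $\mathcal{S}(0,y)-\mathcal{S}(-M,y)$ in $y$ forces $R(0,y)-R(-M,y)$ to compensate the linear term $(y+M)^2-y^2=2My+M^2$. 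That term varies by $2M^2$ over $y\in[-M,0]$. Hence $4(\mathfrak{C}+c\log^{2/3}(2+3M))\ge 2M^2$, and the tail bound $\mathbb{E}[a^{\mathfrak{C}^{3/2}}]<\infty$ sends this probability to $0$.
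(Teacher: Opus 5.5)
Your argument for \eqref{eq: prob zero one} is essentially the paper's. You write $\mathcal{S}(x,\cdot)-\mathcal{S}(x,y_0)$ as the top line of a two-line Pitman environment, observe that the quadrangle equality holds once $\mathcal{A}_1(\cdot)-\mathcal{A}_1(y_0)$ dominates the second line uniformly in $x$, and get the uniform negative gap at $y_0$ from the geodesic-ordering bound of Proposition~\ref{prop: separation of initial data}. The uniformity you flag is settled there: for each fixed $k$ the inequality holds simultaneously for all $x\le\beta$, and its right-hand side does not depend on $x$. You then truncate the levels with Lemma~\ref{lemma: infinite lpp cont at zero} and resample $\mathcal{A}_1$ via Brownian Gibbs. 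Your variations are harmless: the base point $y_0=1$ keeps every $x$ strictly positive, you compare with $x=x_0$ instead of using $\mathcal{S}(0,\cdot)=\mathcal{A}_1$, and you define the barrier $\Phi$ intrinsically as a countable supremum. That last choice makes its measurability with respect to the exterior $\sigma$-algebra immediate, rather than routing it through the random level $L_M$.

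For \eqref{eq: conv to zero prob}, which you correctly read as $M\to\infty$, your route is genuinely different. The paper uses the geodesic interpretation of the quadrangle equality and the concentration of directed-landscape geodesics near straight lines \cite[Theorem 1.7]{DOV}, arguing that far-apart geodesics are disjoint with high probability. You instead note that the event forces the single identity $\mathcal{S}(0,0)+\mathcal{S}(-M,-M)=\mathcal{S}(0,-M)+\mathcal{S}(-M,0)$. The parabolic part of this relation contributes $2M^2$, while \eqref{eq: airy shape bnds} bounds the four remainder terms by $4(\mathfrak{C}+c\log^{2/3}(2+2M))$. Hence the event lies in $\{\mathfrak{C}\ge M^2/2-c\log^{2/3}(2+2M)\}$.

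This is shorter and fully rigorous given the shape estimate; almost-sure finiteness of $\mathfrak{C}$ already suffices, and the exponential moment gives a rate of order $a^{-cM^3}$. The paper's argument is only sketched, but it explains the phenomenon geometrically.

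One slip: your opening sentence about $D(y)=\mathcal{S}(M,y)-\mathcal{S}(-M,y)$ is wrong, since for $x=M$ only $y=0$ satisfies $|x|+|y|\le M$. The computation you actually carry out uses $x=0$, which is correct, and only the point $(0,0)$ is needed.
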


\begin{remark}
    This result also has a geometric interpretation in terms of geodesics in the directed landscape (cf. \cite[Section 12]{DOV} for a definition). In particular, by \cite[Lemma 3.15]{Gangulyetal2022}, the above event is equivalent to the fact that no two directed geodesics with endpoints $(x_1, 0), (y_1, 1)$ and $(x_2, 0), (y_2, 1)$ for $-M \le x_1 <  x_2 \le M, -M \le y_1 < y_2 \le M$, remain disjoint on $[0,1]$.
\end{remark}

\begin{proof}
    Observe that by shift invariance of the Airy sheet, \eqref{eq: Airy sheet symmetry} we have 
    \[
    \PP(\mathcal{S}(x, y) = \mathcal{S}(x, -M) + \mathcal{S}(-M, y) - \mathcal{S}(-M, -M)\,,\text{ for all } |x|+ |y|\le M)\]
    \[
    =\PP(\mathcal{S}(x, y) = \mathcal{S}(x, 0) + \mathcal{S}(0, y) - \mathcal{S}(0, 0)\,,\text{ for all } (x, y) \in [0, 2M]^2) \,.
    \]

    Now, we have using the coupling between the Airy sheet and Airy line ensemble and Lemma \ref{lemma: infinite lpp cont at zero} that almost surely $\mathcal{S}(0, \cdot) = \mathcal{A}_1(\cdot)$ and for all $(x, y) \in [0, 2M]^2\cap \Q^2$,
    \[
    \mathcal{S}(x, y) = \max_{1\le \ell\le L_M}(\mathcal{A}[x\to (0, \ell)]+ \mathcal{A}[(0, \ell)\to (y, 1)])\,,
    \]
    where $L_M\ge 2$ is an almost surely finite random constant depending only on $M$ (and can be extracted from the geodesic geometry of the Airy line ensemble).

    Moreover, for all $x\in (0, 2M]\cap \Q$, one has the uniform almost sure bounds due to geodesic geometry in the Airy line ensemble, Proposition \ref{prop: separation of initial data}
    \begin{align}\label{eq: gap unif airy lpp}
    \eta_M &:= \sup_{x\in (0, 2M)\cap \Q}(\mathcal{A}[x\to (0, 2)]-\mathcal{A}[x\to (0, 1)])\\
    &\le \mathcal{A}[(\varepsilon^\infty_{\lceil 2M\rceil , 2}, 2)\to (0, 1)]-\mathcal{A}[(\varepsilon^\infty_{\lceil 2M\rceil , 2}, 2)\to (0,2)]
    \end{align}
    where $\varepsilon^\infty_{\lceil 2M\rceil , 2}$ is the first time the semi-infinite geodesic $\pi[\lceil 2M\rceil  \to (0, 2)]$ (cf. \ref{def: semi-inf geo}) reaches level $2$ in the environment given by the Airy line ensemble. The fact that this upper bound is indeed almost surely strictly less than zero follows from \cite[Lemma 4.7]{tassopoulos2025quantitativebrownianregularitykpz} (essentially from the locally Brownian nature of the Airy line ensemble and Lemma \ref{lemma: semi-inf geod jump time pos} in the Appendix).

    Now, we can represent the Airy sheet as the top line of the melon (cf. \eqref{eq: pitmantrans})
    \[
    \mathcal{S}(x, y) = WF^x_1(y)\,, (x, y) \in (0, 2M) \cap \Q^2\,,
    \]
    where the random environment $F^x=(F_1^x,F_2^x)\in \mathscr{C}^2([0, 2M] ;\R)$ is given by
    \begin{align*}
        (F^x_1, F^x_2)&=(\mathcal{A}[x\to (0, 1)]+\mathcal{A}_1(\cdot) -\mathcal{A}_1(0), \mathcal{A}[x\to (\cdot, 2)])\,,
    \end{align*}
    {
    where
    \begin{align*}
        \mathcal{A}[x\to (\cdot, 2)] &:= \lim_{k\to \infty}(\mathcal{A}[x_k\to (\cdot, 2)]-\mathcal{A}[x_k \to (\cdot, 1)]+ \mathcal{S}(x, \cdot))\\
        &=\max_{2\le \ell\le L_M}(\mathcal{A}[x\to (0, \ell)]+ \mathcal{A}[(0, \ell)\to (\cdot, 2)])\,.
    \end{align*}
    }
    Thus, it suffices to show that with probability strictly between zero and one,
    \[
    \mathcal{S}(x, y) = WF^x_1(y) = \mathcal{A}[x\to (0, 1)]+\mathcal{A}_1(\cdot) -\mathcal{A}_1(0) = \mathcal{S}(x, 0) + \mathcal{S}(0, y) -\mathcal{S}(0, 0)\,, (x, y) \in (0, 2M) \cap \Q^2\,.
    \]
    This happens if and only if almost surely for all $x\in (0, 2M) \cap \Q$, $F^x_2\le F^x_1$ on $[0, 2M]$. In other words, if 
    \[
    \mathcal{A}[x\to (0, 1)]+\mathcal{A}_1(\cdot) -\mathcal{A}_1(0)\ge \mathcal{A}[x\to (\cdot, 2)]\,,\quad y \in [0, 2M]\,.
    \]
    Now, the uniform bound \eqref{eq: gap unif airy lpp}, the monotonicity of $\mathcal{A}[x\to (0, \ell)]$ (cf. Proposition \ref{prop: separation of initial data}) and last passage values implies we can estimate pointwise
   {
    \begin{align*}
        F^x_2(y) &= \mathcal{A}[x\to (y, 2)]\le \max_{2\le \ell\le L_M}(\mathcal{A}[x\to (0, \ell)]+ \mathcal{A}[(0, \ell)\to (y, 2)])\\
        &\le \mathcal{A}[x\to (0, 2)]+ \mathcal{A}[(0, L_M)\to (y, 2)])\\
        &= \mathcal{A}[x\to (0, 1)] + \eta_M + \mathcal{A}[(0, L_M)\to (y, 2)]\,,\quad y \in [0, 2M]\,.
    \end{align*}
    }

    Note by ergodic properties of the Airy sheet, \eqref{eq: ergod airy sheet} and the coupling \ref{eq: airy sheet coupling full space}, $\mathcal{A}[x\to (0, \ell)]$ is $\sigma(\{\mathcal{A}_i(x): (i, x) \in (-\infty, 0)\times \N\})$-measurable (cf. \cite[Lemma 3.9]{sarkar2021brownian}), and thus is $F^x_2$ is
    \[
    \sigma(\{\mathcal{A}_i(x): (i, x) \in \N\times (-\infty, 0) \cup  \{2, \ldots\}\times \R\})\]
    measurable. Moreover, by the Brownian Gibbs property for the Airy line ensemble and standard facts about Brownian bridges, we have 
    \[
    \mathcal{A}_1(y) -\mathcal{A}_1(0)\ge \eta_M + \mathcal{A}[(0, L_M)\to (y, 2)])\,,\quad y \in [0, 2M]
    \]
    with positive probability. 
    
    To show \eqref{eq: conv to zero prob}, it suffices to observe that with the remark above, that two geodesics in the directed landscape with endpoints $(x_1, 0), (y_1, 1)$ and $(x_2, 0), (y_2, 1)$ for $|x_1 - x_2|\land |y_1-y_2| \gg 1$ remain disjoint on $[0,1]$ with probability converging to one. Indeed, this follows from \cite[Theorem 1.7]{DOV}, which states that any directed geodesic concentrates near the line passing through its endpoints. This gives $\mu([-M, -M+N]\times[-M, -M+ N])>0$ with probability tending to one as $N \to \infty$, concluding the proof.
\end{proof}

We arrive at the following absolute continuity result. It states that the law of the Airy sheet (up to centering) \emph{majorises} the law of \emph{additive Brownian motion}, that is the sum of two independent rate two Brownian motions, on compacts. This is the content of the following theorem. 

\begin{theorem}\label{thm: abs cont airy sheet}
    Fix $K\subset \R^2$ compact. Then, with $B, B'$ two independent Brownian motions starting on $(\inf K , 0 )$, the law of $B(\cdot) + B'(\cdot')$
    on $\mathscr{C}_0(K; \R)$ is absolutely continuous with respect to the law of the Airy sheet $\mathcal{S}(\cdot, \cdot')-\mathcal{S}(\inf K, \inf K)$ on $\mathscr{C}_0(K; \R)$.
\end{theorem}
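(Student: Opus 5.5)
The plan is to reduce to the event of Proposition \ref{prop: Airy sheet quadrangle equality}, on which the Airy sheet splits additively. By translation invariance of the Airy sheet \eqref{eq: Airy sheet symmetry}, we may enlarge $K$ to a square and shift it, so that $K=[0,2M]^2$ with $\inf K=(0,0)$. As in the proof of Proposition \ref{prop: Airy sheet quadrangle equality}, on the event
\[
E_M=\{F^x_2\le F^x_1 \text{ on } [0,2M] \text{ for all } x\in(0,2M)\cap\Q\}
\]
we have
\[
\mathcal{S}(x,y)-\mathcal{S}(0,0)=\big(\mathcal{S}(x,0)-\mathcal{S}(0,0)\big)+\big(\mathcal{A}_1(y)-\mathcal{A}_1(0)\big),
\]
with $\mathcal{S}(x,0)=\mathcal{A}[x\to(0,1)]$, which is $\mathscr{F}_-$-measurable. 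So it suffices to show: if $A\subseteq\mathscr{C}_0(K;\R)$ satisfies $\PP(\mathcal{S}-\mathcal{S}(0,0)\in A)=0$, then $\PP(B(\cdot)+B'(\cdot')\in A)=0$. Restricting to $E_M$ gives
\[
0=\PP\big(X(\cdot)+Y(\cdot')\in A,\,E_M\big),\qquad X=\mathcal{S}(\cdot,0)-\mathcal{S}(0,0),\quad Y=\mathcal{A}_1(\cdot)-\mathcal{A}_1(0).
\]

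Next, use the Brownian Gibbs property. Condition on
\[
\mathscr{G}=\sigma\big(\mathcal{A}_i(x): (i,x)\notin\{1\}\times(0,2M+1)\big),
\]
which contains $\mathscr{F}_-$, hence $X$, the barrier $\eta_M+\mathcal{A}[(0,L_M)\to(\cdot,2)]$, $\mathcal{A}_2$, and $\mathcal{A}_1(0)$. Conditionally on $\mathscr{G}$, $\mathcal{A}_1$ on $[0,2M+1]$ is a Brownian bridge conditioned to avoid $\mathcal{A}_2$. Restricted to $[0,2M]$, and with the bridge endpoint at $2M+1$ fixed, Lemma \ref{lemma: bb comparison lemma} makes $Y$ mutually absolutely continuous with a rate two Brownian motion $B'$, independent of $\mathscr{G}$, conditioned on the non-intersection event. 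The sufficient condition for $E_M$ used in the proof of Proposition \ref{prop: Airy sheet quadrangle equality}, namely $Y\ge \eta_M+\mathcal{A}[(0,L_M)\to(\cdot,2)]$, is then an event $D$ for $B'$ with $\mathscr{G}$-measurable barriers. We obtain
\[
0=\mathbb{E}\big[\mathbf 1(X(\cdot)+B'(\cdot')\in A)\,\mathbf 1_D\big].
\]
The main obstacle is to remove $D$: we need $\PP(D\mid \mathscr{G}, B')$ to be positive a.s., while conditioning on the increments of $B'$. As in the proof of Theorem \ref{thm: mutual abs cont}, I would insert a short bridge segment on $[0,\varepsilon]$ and use Lemma \ref{lemma: bridge monotonicity} to push the value $\mathcal{A}_1(\varepsilon)$ high with positive conditional probability, combined with Lemma \ref{lemma: support bb}. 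This is needed because $\eta_M<0$ gives strict separation only at $y=0$. It shows that $B'$ restricted to $[\varepsilon,2M]$ has its law preserved up to equivalence. Letting $\varepsilon\downarrow0$, or enlarging $K$ to the left beforehand so that $\inf K$ lies strictly inside, handles the initial segment. The result is that $\PP(X(\cdot)+B'(\cdot')\in A)=0$ with $B'$ independent of $X$.

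Finally, $X$ must be replaced by an independent Brownian motion $B$. The process $X$ is the increment of $\mathcal{S}(\cdot,0)$, which by the symmetry \eqref{eq: Airy sheet symmetry} has the law of a parabolic Airy$_2$ process. By Theorem \ref{thm: mut abs cont airy bm}, its law on $[0,2M]$ is mutually absolutely continuous with $\mu$. Since $B'$ is independent of $X$, Fubini gives
\[
\int \PP\big(x(\cdot)+B'(\cdot')\in A\big)\,\mathrm{Law}(X)(dx)=0,
\]
so the inner probability vanishes for $\mathrm{Law}(X)$-a.e. $x$, hence for $\mu$-a.e. $x$. Integrating against $\mu$ yields $\PP(B(\cdot)+B'(\cdot')\in A)=0$, as required. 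The converse absolute continuity is not claimed, and indeed it fails in general, consistent with Proposition \ref{prop: not mut abs cont airy sheet additive brownian motion}.
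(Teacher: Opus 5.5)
Your proposal is correct and follows essentially the same route as the paper. You restrict to the event from Proposition~\ref{prop: Airy sheet quadrangle equality} on which $\mathcal{S}(\cdot,\cdot')=\mathcal{S}(\cdot,0)+\mathcal{A}_1(\cdot')-\mathcal{A}_1(0)$, resample $\mathcal{A}_1$ by Brownian Gibbs and compare it with Brownian motion, remove the barrier event by raising the level of $\mathcal{A}_1$ via stochastic domination, and finally replace the $\mathscr{F}_-$-measurable increment $\mathcal{S}(\cdot,0)$ (independent of the resampled Brownian motion) by a Brownian motion using Theorem~\ref{thm: mut abs cont airy bm} and Fubini.

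Of your two fixes for the initial segment, only the second is sound: absolute continuity on every $[\varepsilon,2M]$ does not pass to $[0,2M]$ as $\varepsilon\downarrow 0$. The second fix is exactly the paper's reduction to $[1,2\,\mathrm{diam}K+1]^2$. It keeps the coupling's anchor $y=0$ strictly to the left of $K$, so the level of $\mathcal{A}_1$ at the left end of $K$ can be raised while the increments on $K$ are held fixed.
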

\begin{proof}
 By translation symmetries of the Airy sheet, we can replace $K$ with $[1, 2\mathrm{diam} K+1]^2$.

    Let $A\subseteq \mathscr{C}_0(K; \R)$ be Borel measurable. Using the coupling of the Airy sheet with the Airy line ensemble, \ref{eq: airy sheet coupling full space}, we have with $M = \mathrm{diam} K$ on the event (recall $\eta_M$ from \eqref{eq: gap unif airy lpp} in Proposition \ref{prop: Airy sheet quadrangle equality})
\[
A_M \equiv \{\mathcal{A}_1(y) - \mathcal{A}_1(0)\ge \eta_M + \mathcal{A}[(0, L_M)\to (y, 2)]\,,\quad y \in [0, 2M+1]\}\,,
\]
\[
\mathcal{S}(x, y) = \mathcal{S}(0,y)-\mathcal{S}(0,0) + \mathcal{S}(x, 0) = \mathcal{A}_1(y)-\mathcal{A}_1(0) + \mathcal{S}(x, 0)\,.
\]
Note by the coupling \eqref{eq: airy sheet coupling full space} and ergodic properties of the Airy sheet \eqref{eq: ergod airy sheet}, $\mathcal{S}(\cdot, 0)$, $\eta_M$, $L_M$ are 
    \[
    \mathscr{F}_{(-\infty, 0)\times \N\cup \R\times \{2, \ldots\}}\equiv \sigma(\{\mathcal{A}_i(x): (i, x) \in \N\times (-\infty, 0) \cup  \{2, \ldots\}\times \R\})
    \]
    measurable. We can thus estimate
    \begin{align*}
        \PP(\mathcal{S}(\cdot, \cdot ')&-\mathcal{S}(\inf K, \inf K)\in A)\ge \PP( \mathcal{A}_1(\cdot)-\mathcal{A}_1(1) + \mathcal{S}(\cdot', 0)-\mathcal{S}(1, 0) \in A, A_M)\\
        &= \PP\big(\mathcal{A}_1(\cdot)-\mathcal{A}_1(1) + \mathcal{S}(\cdot', 0)-\mathcal{S}(1, 0) \in A,\\
        & \mathcal{A}_1(y)-\mathcal{A}_1(1)+\mathcal{A}_1(1)- \mathcal{A}_1(0)\ge \eta_M + \mathcal{A}[(0, L_M)\to (y, 2)])\,, y \in [0, 2M+1]\big)\,.
    \end{align*}
    
    Now, by the Brownian Gibbs property, (cf. \ref{subsec: Airy line ensemble}), one can resample the Airy line ensemble on $\{1\}\times [1, 2M+2]$ to have the law of the affine shift $L(\cdot)+W(\cdot)$ of an independent Brownian bridge $W(\cdot)$ starting from $(1, 0)$ and ending at $(2M+2, 0)$, with $L(0) = \mathcal{A}_1(1)$ and $L(2M+2) = \mathcal{A}_1(2M+2)$ conditioned to avoid $\mathcal{A}|_{[1, 2M+2]\times \{2\}}$. In particular, by Lemma \ref{lemma: bb comparison lemma}, we have the law of $L(\cdot) + W(\cdot)$ restricted to $[1, 2M+1]$ is mutually absolutely continuous with respect to the law of $\mathcal{A}_1(1) + B(\cdot)$, where $B$ is a Brownian motion starting at $(1, 0)$.  Thus, we have
    \begin{align*}
        &\PP\big(L(\cdot) + W(\cdot)-\mathcal{A}_1(1) + \mathcal{S}(\cdot', 0)-\mathcal{S}(1, 0) \in A,\\
        &\mathcal{A}_1(y)- \mathcal{A}_1(0)\ge \eta_M + \mathcal{A}[(0, L_M)\to (y, 2)])\,, y \in [0, 1]\,,\\
        & W(\cdot) + L(\cdot)-\mathcal{A}_1(1)+\mathcal{A}_1(1)- \mathcal{A}_1(0)\ge \eta_M + \mathcal{A}[(0, L_M)\to (y, 2)])\,, y \in [1, 2M+1]\,,\\
        & W(y) + L(y)\ge \mathcal{A}_2(y)\,, y \in [1, 2M+1]\big) = 0
    \end{align*}
    if and only if
    \begin{align*}
        &\PP\big(B(\cdot)+ \mathcal{S}(\cdot', 0)-\mathcal{S}(1, 0) \in A,\mathcal{A}_1(y)- \mathcal{A}_1(0)\ge \eta_M + \mathcal{A}[(0, L_M)\to (y, 2)])\,, y \in [0, 1]\,,\\
        & B(\cdot)+\mathcal{A}_1(1)- \mathcal{A}_1(0)\ge \eta_M + \mathcal{A}[(0, L_M)\to (y, 2)])\,, y \in [1, 2M+1]\,,\\
        &\mathcal{A}_1(1) + B(\cdot)\ge \mathcal{A}_2(y)\,, y \in [1, 2M+1]\big) = 0\,.
    \end{align*}
    Now, conditioning on $\mathscr{F}^\mathrm{ext}_{[1, 2M+1]\times \{1\}}\lor \sigma(B_t: t\in [1, 2M+1])$ where
    \[
    \mathscr{F}^\mathrm{ext}_{[1, 2M+1]\times \{1\}}\equiv \sigma(\{\mathcal{A}_i(x): (i,x)\not\in  \{1\}\times [1, 2M+1])
    \]
     by the Brownian bridge property \ref{subsec: Airy line ensemble}, stochastic domination arguments using Lemma \ref{lemma: bridge monotonicity} and \ref{lemma: support bb} (recall $\mathcal{S}(\cdot, 0)$, $\eta_M$, $L_M$ are $\mathscr{F}_{(-\infty, 0)\times \N\cup \R\times \{2, \ldots\}}$-measurable),
    \begin{align*}
        &\PP\bigg(\mathcal{A}_1(y)- \mathcal{A}_1(0)\ge \eta_M + \mathcal{A}[(0, L_M)\to (y, 2)])\,, y \in [0, 1]\,,\\
        & B(\cdot)+\mathcal{A}_1(1)- \mathcal{A}_1(0)\ge \eta_M + \mathcal{A}[(0, L_M)\to (y, 2)])\,, y \in [1, 2M+1]\,,\\
        & \mathcal{A}_1(1) + B(\cdot)\ge \mathcal{A}_2(y)\,, y \in [1, 2M+1]\bigg | \mathscr{F}^\mathrm{ext}_{[1, 2M+1]\times \{1\}}\lor \sigma(B_t: t\in [0, 1, 2M+1])\bigg) >0
    \end{align*}
    almost surely. We thus deduce that the law of $B(\cdot) + \mathcal{S}(\cdot', 0)$ (for $B$ and $\mathcal{S}$ independent)
    on $\mathscr{C}_0(K; \R)$ is absolutely continuous with respect to the law of the Airy sheet $\mathcal{S}(\cdot, \cdot')-\mathcal{S}(\inf K, \inf K)$ on $\mathscr{C}_0(K; \R)$. To conclude the proof observe that the law of $\mathcal{S}(\cdot, 0)-\mathcal{S}(\inf K , 0)$ is mutually absolutely continuous with respect to a rate two Brownian motion on $K$, by the coupling of the Airy sheet with the Airy line ensemble \eqref{eq: airy sheet coupling full space} and Theorem \ref{thm: mut abs cont airy bm}.
\end{proof}

This means the law of the Airy sheet has full topological support in the space of certain \emph{separable} continuous functions in $\R^2$. In particular, generalising the quadrangle equality in the statement of Proposition \ref{prop: Airy sheet quadrangle equality}, for $K\subset \R^2$ compact, we denote by
    \begin{equation} \label{eq: rect}
    \bm{\mathrm{Rect}}_K \equiv \{h\in \mathscr{C}^2(K; \R): h(x_1, y_1) + h(x_2, y_2)-h(x_1, y_2) -h(x_2, y_1) = 0\,, \text{ for all } x_{1, 2}, y_{1, 2}\in K\}\,,
    \end{equation}
    the set of all continuous functions on $K$ satisfying the so-called \emph{rectangle property} above.

Theorem \ref{thm: abs cont airy sheet} \emph{cannot} be strengthened to mutual absolute continuity between the Airy sheet and additive Brownian motion. This is indeed the case and is the content of the following proposition.

\begin{proposition}\label{prop: not mut abs cont airy sheet additive brownian motion}
    Fix $a, b \in \R$ and $B, B'$ two independent Brownian motions starting from $(a , 0 )$ and $(b, 0)$ respectively. Then, the law of the centred Airy sheet $\mathcal{S}(\cdot, \cdot')-\mathcal{S}(a, b)$ on $\mathscr{C}_0([a, \infty)\times [b, \infty); \R)$ 
    is \emph{not} absolutely continuous with respect to the law of the additive Brownian motion $B(\cdot) + B'(\cdot')$ on $\mathscr{C}_0([a, \infty)\times [b, \infty); \R)$.
\end{proposition}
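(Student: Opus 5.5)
We exhibit an event that has probability zero under additive Brownian motion and positive probability under the centred Airy sheet. Additive Brownian motion $H(x,y)=B(x)+B'(y)$ satisfies the rectangle property identically. For all $x\ge a$ and $y\ge b$,
\[
H(x,y)-H(x,b)-H(a,y)+H(a,b)=0 .
\]
Hence the law of $B(\cdot)+B'(\cdot')$ is supported on $\bm{\mathrm{Rect}}_{[a,\infty)\times[b,\infty)}$ (cf. \eqref{eq: rect}), which is a closed, hence Borel, subset of $\mathscr{C}_0([a,\infty)\times[b,\infty);\R)$. Its complement $N$ is therefore a null set for additive Brownian motion. It suffices to show that the centred Airy sheet lies in $N$ with positive probability; in fact we show it does so almost surely. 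The centring by $\mathcal{S}(a,b)$ does not affect the rectangle increments, so we only need
\[
\PP\big(\mu_{a,b}([a,\infty)\times[b,\infty))>0\big)>0,
\]
where $\mu_{a,b}$ is the random measure defined in \eqref{eq: airy diff}.

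For this we use the second half of Proposition~\ref{prop: Airy sheet quadrangle equality}, namely \eqref{eq: conv to zero prob} and the argument that follows it. By translation invariance of the Airy sheet \eqref{eq: Airy sheet symmetry}, the probability that the rectangle equality holds on $[a,a+N]\times[b,b+N]$ equals the corresponding probability for the square in \eqref{eq: conv to zero prob}, which tends to $0$ as $N\to\infty$. Equivalently, with probability tending to one, $\mu_{a,b}([a,a+N]\times[b,b+N])>0$. These events increase in $N$, so their union has probability one. Thus, almost surely, the centred Airy sheet violates the rectangle property somewhere on $[a,\infty)\times[b,\infty)$, i.e. it lies in $N$, while additive Brownian motion gives $N$ measure zero. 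This proves that the law of the centred Airy sheet is not absolutely continuous with respect to additive Brownian motion.

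The main obstacle is making the input from \eqref{eq: conv to zero prob} rigorous, i.e. showing that quadrangle equality fails on large boxes with high probability. The route we would take follows the remark after Proposition~\ref{prop: Airy sheet quadrangle equality}. Take geodesics from $(a,0)$ to $(b,1)$ and from $(a+N,0)$ to $(b+N,1)$. By \cite[Theorem 1.7]{DOV}, each stays within $o(N)$ of its straight line, and the two lines are parallel at distance $N$, so the geodesics are disjoint with probability tending to one. By \cite[Lemma 3.15]{Gangulyetal2022}, disjointness gives strict quadrangle inequality, i.e.
\[
\mathcal{S}(a,b)+\mathcal{S}(a+N,b+N)>\mathcal{S}(a,b+N)+\mathcal{S}(a+N,b).
\]
A cruder alternative avoids geodesic geometry. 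By the shape estimate \eqref{eq: airy shape bnds}, the left-hand side above is at least $-2\mathfrak{C}-O(\log^{2/3}N)$, while the right-hand side is at most $-2N^2+2\mathfrak{C}+O(\log^{2/3}N)$. The quadrangle increment is therefore at least $2N^2-4\mathfrak{C}-O(\log^{2/3}N)$, which is positive for all $N$ large enough almost surely. This alternative gives the almost sure statement directly and is the version we would actually write.
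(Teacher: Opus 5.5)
Your proposal is correct, and its outer structure matches the paper's. Additive Brownian motion is carried by the closed set $\bm{\mathrm{Rect}}$, and the centring constant does not affect rectangle increments. So it suffices that the Airy sheet leaves $\bm{\mathrm{Rect}}$ with positive probability; the paper encodes this via the functional $\Delta^M$, which vanishes on $\bm{\mathrm{Rect}}$. Your first route to that input is the paper's own: \eqref{eq: conv to zero prob} from Proposition~\ref{prop: Airy sheet quadrangle equality}, justified by geodesic concentration \cite[Theorem 1.7]{DOV} together with the disjointness criterion \cite[Lemma 3.15]{Gangulyetal2022}.

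The route you say you would actually write is genuinely different and more elementary. It uses only the shape bound \eqref{eq: airy shape bnds}, together with the observation that the mixed second difference of $-(x-y)^2$ over $[a,a+N]\times[b,b+N]$ is exactly $2N^2$. The four error terms cost at most $4\mathfrak{C}+O(\log^{2/3}N)$. Two small points on the details:
\begin{itemize}
\item Your stated lower bound for $\mathcal{S}(a,b)+\mathcal{S}(a+N,b+N)$ drops the constant $-2(a-b)^2$. It cancels against the same constant in $\mathcal{S}(a,b+N)+\mathcal{S}(a+N,b)$, so the final bound $2N^2-4\mathfrak{C}-O(\log^{2/3}N)$ is right.
\item In your first route, moving a box based at $(a,b)$ with $a\neq b$ to a diagonal base point needs the shear symmetry in \eqref{eq: Airy sheet symmetry}, not just translation invariance. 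This is harmless, because the shear adds a function of the form $f(x)+g(y)$, which does not change rectangle increments.
\end{itemize}

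What your shape-bound route buys is a self-contained proof that avoids the geodesic input, which the paper argues only briefly. It also gives the conclusion almost surely, so the two laws are in fact mutually singular on $[a,\infty)\times[b,\infty)$. The paper states only positive probability, though the almost sure version could also be extracted from \eqref{eq: conv to zero prob}. The paper's route, in turn, ties the failure of the rectangle property to the geometric picture of disjoint geodesics developed in Proposition~\ref{prop: Airy sheet quadrangle equality}.
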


\begin{proof}
    Suppose for a contradiction that the law of the Airy sheet $\mathcal{S}(\cdot, \cdot')-\mathcal{S}(a, b)$ on $\mathscr{C}_0([a, \infty)\times [b, \infty); \R)$ 
    \emph{is} absolutely continuous with respect to the law of the additive Brownian motion $B(\cdot) + B'(\cdot')$ on $\mathscr{C}_0([a, \infty)\times [b, \infty); \R)$. Consider for any $M> 1$, the continuous (with respect to the uniform topology) functional on $\mathscr{C}_0([a, \infty)\times [b, \infty); \R)$, 
    \[
    f\mapsto \Delta^M (f) \stackrel{\mathrm{def}}{=} \max_{(x, y)\in [a, a+M]\times [b, b+M]} (f(x, y) - 
    f(x, b ) -f(a, y) + f(a,b))\,.
    \]
    By Proposition \ref{prop: Airy sheet quadrangle equality}, the Airy sheet satisfies
    \[
    \Delta^M(\mathcal{S}) = \max_{(x, y)\in [a, a+M]\times [b, b+M]}\mu_{a, b}([a, x]\times [b, y]) > 0
    \]
    with positive probability for all $M>1$ sufficiently large, whereas the restriction of $\Delta^M$ to $\bm{\mathrm{Rect}}_{[a, a+M]\times [b, b+M]^2}$ is identically zero for all $M$. This is a contradiction as additive Brownian motion is supported on $\bm{\mathrm{Rect}}_{[a, a+M]\times [b, b+M]^2}$.
\end{proof}

We end this subsection with the following corollary regarding the topological support of the Airy sheet. 

\begin{corollary}\label{cor: Airy sheet top supp}
Fix $K\subset \R^2$ compact, $h\in \bm{\mathrm{Rect}}_K$ (cf. \eqref{eq: rect}) and $\varepsilon > 0$. Then, for any compact $K\subseteq \R^2$,
\[
\PP(||\mathcal{S}(\cdot, \cdot')-\mathcal{S}(\inf K , \inf K)-(h(\cdot, \cdot')-h(\inf K , \inf K))||_{L^\infty(K)}< \varepsilon) > 0\,.
\]
\end{corollary}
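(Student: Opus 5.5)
The plan is to transfer the full support of additive Brownian motion to the centred Airy sheet via Theorem \ref{thm: abs cont airy sheet}. Set $x_0 = \inf K$ in the first coordinate and $y_0$ in the second, with $(x_0,y_0)$ playing the role of $(\inf K,\inf K)$ as in that theorem. The first step is a structural observation. If $h\in \bm{\mathrm{Rect}}_K$, then taking $x_1 = x_0$, $y_1 = y_0$ in \eqref{eq: rect} gives
\[
h(x,y)-h(x_0,y_0) = \big(h(x,y_0)-h(x_0,y_0)\big) + \big(h(x_0,y)-h(x_0,y_0)\big).
\]
Hence $h-h(x_0,y_0) = f(\cdot) + g(\cdot')$, where $f(x) = h(x,y_0)-h(x_0,y_0)$ and $g(y) = h(x_0,y)-h(x_0,y_0)$. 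Both are continuous, they vanish at $x_0$ and $y_0$ respectively, and they are defined on the coordinate projections of $K$. If $K$ is not a product, I would first enlarge it to the bounding rectangle $[x_0,x_1]\times[y_0,y_1]$ and extend $f$ and $g$ continuously (Tietze). The sup over $K$ is dominated by the sup over the rectangle, so it suffices to work there.

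The second step shows that additive Brownian motion charges the open uniform ball $U=\{\phi: \|\phi - (f(\cdot)+g(\cdot'))\|_{L^\infty}<\varepsilon\}$. Since $B$ and $B'$ are independent,
\[
\PP\big(B(\cdot)+B'(\cdot')\in U\big)\ge \PP\big(\|B-f\|_{\infty}<\varepsilon/2\big)\,\PP\big(\|B'-g\|_{\infty}<\varepsilon/2\big).
\]
Each factor is positive by Lemma \ref{lemma: support bb} combined with Lemma \ref{lemma: bb comparison lemma}, as in the Remark after Lemma \ref{lemma: support bb}. Concretely, one views Brownian motion on $[x_0,x_1]$ as the restriction of a Brownian bridge on a longer interval, whose law is mutually absolutely continuous with Brownian motion there. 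One then extends $f$ to a continuous function vanishing at the bridge's right endpoint and rescales time to $[0,1]$.

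The third step applies Theorem \ref{thm: abs cont airy sheet} on the rectangle. The law of $B(\cdot)+B'(\cdot')$ is absolutely continuous with respect to the law of $\mathcal{S}(\cdot,\cdot')-\mathcal{S}(x_0,y_0)$. If the latter gave $U$ measure zero, so would the former, contradicting step two. Therefore $\PP(\mathcal{S}(\cdot,\cdot')-\mathcal{S}(x_0,y_0)\in U)>0$, and restricting to $K$ gives the claim.

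I expect the only delicate point to be the bookkeeping of base points and domains. Theorem \ref{thm: abs cont airy sheet} is stated with both motions started at $\inf K$ and with the reduction $K\mapsto[1,2\operatorname{diam}K+1]^2$. One must therefore check that the base point of the centring matches the one in the decomposition of $h$, and that a product domain can be used. Both follow by enlarging $K$ to a square and using translation invariance \eqref{eq: Airy sheet symmetry}. The probabilistic content itself is immediate once absolute continuity is available.
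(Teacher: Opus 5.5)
Your proposal is correct and is essentially the paper's argument: the additive splitting of $h$, a product small-ball bound for $B(\cdot)+B'(\cdot')$ via independence and Lemma~\ref{lemma: support bb}, and then transfer through Theorem~\ref{thm: abs cont airy sheet}, with the base-point bookkeeping handled more carefully than in the paper's one-line proof. One caveat, which the paper shares, is that the splitting $h-h(x_0,y_0)=f(\cdot)+g(\cdot')$ needs $(x_0,y_0)$ and the points $(x,y_0)$, $(x_0,y)$ to lie in $K$ (e.g.\ $K$ a product). Your Tietze remark therefore does not cover a genuinely non-product $K$: there $f,g$ are undefined, and the claim can even fail, since for a six-point staircase $K$ the rectangle condition is vacuous while \eqref{eq: Airy sheet monoton} forces the alternating sum of $\mathcal{S}$ over its points to be nonnegative.
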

\begin{proof}
    The proof is a direct application of Theorem \ref{thm: abs cont airy sheet}, Lemma \ref{lemma: support bb} and the fact that any $h\in \bm{\mathrm{Rect}}_K$ satisfies $h(\cdot, \cdot') = h(0, \cdot)-h(0,0) + h(\cdot', 0)-h(0,0)$.
\end{proof}

\section{Applications}\label{sec: applications}

In this section, we discuss some applications of the mutual absolute continuity result of the KPZ fixed point established in Theorem \ref{thm: mut abs cont finitary init data} and the absolute continuity result of the Airy sheet proved in Theorem \ref{thm: abs cont airy sheet}. 

\subsection{Record times for the KPZ fixed point}

We obtain as a direct consequence of Theorem~ \ref{thm: mut abs cont finitary init data} a result involving the `topological support' of record times for the KPZ fixed point, that is the times the KPZ fixed point attains its running maximum (relative to some starting point). 

Define the random closed subset of \emph{record times} for the KPZ fixed point $\mathfrak{h}_t(\cdot, h_0)$ at time $t>0$ started from initial data $h_0\in \mathscr{I}_t$,
\[
\mathscr{R}(a, t, h_0) \stackrel{\mathrm{def}}{=} \{y\ge a: \mathfrak{h}_t(y, h_0) = \max_{a\le s\le y}\mathfrak{h}_t(s, h_0)\}\,, \quad a\in \R\,.
\]

In the following corollary, we obtain that in any interval $[b, c]$, $a< b< c$, the set of record times $\mathscr{R}(a, t, h_0)\cap [b, c]$ is non-empty with probability strictly between zero and one. 

\begin{corollary}\label{cor: record times}
    For $t> 0$, $a< b< c\in \R$ and $h_0\in \mathscr{I}_t$, 
    \[
    0 < \PP(\mathscr{R}(a, t, h_0)\cap [b, c] \neq \emptyset ) < 1\,.
    \]
\end{corollary}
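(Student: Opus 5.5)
The plan is to transfer the question to Brownian motion via the mutual absolute continuity of Theorem~\ref{thm: mut abs cont finitary init data}, applied on the compact $K=[a,c]$. The event
\[
E \stackrel{\mathrm{def}}{=} \{\mathscr{R}(a,t,h_0)\cap[b,c]\neq\emptyset\} = \Big\{\max_{b\le y\le c}\mathfrak{h}_t(y,h_0) = \max_{a\le s\le c}\mathfrak{h}_t(s,h_0)\Big\}
\]
depends only on the increments $\mathfrak{h}_t(\cdot,h_0)-\mathfrak{h}_t(a,h_0)$ on $[a,c]$. Indeed, the set of record times is invariant under a constant height shift. The equality with the displayed event holds because $[b,c]$ and $[a,c]$ are compact and $\mathfrak{h}_t$ is continuous: a record time in $[b,c]$ exists if and only if the running maximum over $[a,c]$ is attained somewhere in $[b,c]$. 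Hence $E=\{\mathfrak{h}_t(\cdot,h_0)-\mathfrak{h}_t(a,h_0)\in A\}$ for the Borel set
\[
A=\Big\{f\in\mathscr{C}_0([a,c];\R): \max_{[b,c]}f=\max_{[a,c]}f\Big\},
\]
which is even closed in the uniform topology. Consequently $\PP(E)=\nu^{h_0}(A)$ and $1-\PP(E)=\nu^{h_0}(A^c)$.

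By Theorem~\ref{thm: mut abs cont finitary init data}, $\nu^{h_0}$ and $\mu$ share the same null sets on $\mathscr{C}_0([a,c];\R)$. It therefore suffices to show $0<\mu(A)<1$, i.e.\ that for a rate two Brownian motion $B$ started at $(a,0)$, both $A$ and $A^c$ have positive Wiener measure. For this I would use the support statement (Lemma~\ref{lemma: support bb} together with Lemma~\ref{lemma: bb comparison lemma}, as in the remark following Lemma~\ref{lemma: support bb}): every nonempty open subset of $\mathscr{C}_0([a,c];\R)$ has positive $\mu$-measure. It remains to exhibit nonempty open sets $U_1\subseteq A$ and $U_2\subseteq A^c$.

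For $U_1$, take $f_1(s)=s-a$. The uniform $\delta$-ball around $f_1$ with $\delta<(c-b)/4$ lies in $A$, since any $g$ in it satisfies
\[
\max_{[a,b]}g\le b-a+\delta < c-a-\delta\le \max_{[b,c]}g .
\]
For $U_2$, take $f_2(s)=\min(s-a,\,2(b-a)-(s-a))$, a tent peaking at $s=b$ with value $b-a$ and strictly smaller values on $[b,c]$ away from $b$. This tent does not give an open neighbourhood inside $A^c$, because near $b$ the value is only slightly less. So I would instead use the tent peaking at the midpoint $m=(a+b)/2$, say $f_2(s)=(m-a)-|s-m|$ shifted to vanish at $a$. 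Its maximum over $[a,c]$ is $f_2(m)$, and $\sup_{[b,c]}f_2\le f_2(m)-(b-m)$. A $\delta$-ball with $\delta<(b-m)/2$ then lies in $A^c$, since for any $g$ in it
\[
\max_{[b,c]}g\le f_2(m)-(b-m)+\delta < f_2(m)-\delta\le \max_{[a,b]}g .
\]

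With both open sets in hand, $\mu(A)\ge\mu(U_1)>0$ and $\mu(A^c)\ge\mu(U_2)>0$, and mutual absolute continuity then yields $0<\PP(E)<1$. There is no real obstacle; the only points needing care are the measurability of $A$, which is immediate since $A$ is closed, and the need to use both directions of absolute continuity, $\mu\ll\nu^{h_0}$ to get positivity of $\nu^{h_0}(A)$ and $\nu^{h_0}(A^c)$. Alternatively one could invoke Corollary~\ref{cor: support kpz} directly with $f_1,f_2$, which gives the same conclusion.
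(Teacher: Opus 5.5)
Your proof is correct, but it handles the Brownian side differently from the paper. Both arguments transfer the question to Wiener measure through the direction $\mu\ll\nu^{h_0}$ of Theorem~\ref{thm: mut abs cont finitary init data}. This is in fact the only direction either argument uses, since positivity of both $\nu^{h_0}(A)$ and $\nu^{h_0}(A^c)$ follows from it; your closing remark about needing ``both directions'' should really say ``both events''.

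\textbf{The paper's route.} It uses L\'evy's theorem. The record set of $W$ is the zero set of $\max_{[a,\cdot]}W-W$, which has the law of the zero set of $|W|$. So the Brownian probability becomes $\PP(W \text{ vanishes somewhere in } [b,c])$, which is classically strictly between zero and one (explicitly via the arcsine law).

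\textbf{Your route.} You use only the full topological support of Wiener measure. You exhibit a uniform ball around $s\mapsto s-a$ that lies in $A$, and a ball around a tent peaked at $(a+b)/2$ that lies in $A^c$.

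\textbf{Comparison.} Your argument is softer and more robust. It needs no fine path property of Brownian motion, and it shows the corollary is really a consequence of Corollary~\ref{cor: support kpz}. You are also more explicit than the paper on a point it leaves implicit: via the equivalence with $\max_{[b,c]}f=\max_{[a,c]}f$, the event is a closed, hence Borel, function of the increments on $[a,c]$. The paper's route links KPZ record times to Brownian zero sets and identifies the exact Brownian probability. However, absolute continuity transfers only null sets, so that exact value is not inherited by the KPZ fixed point; for the stated corollary both routes deliver the same conclusion.
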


\begin{proof}
    By the mutual absolute continuity of the increments of the KPZ fixed point against Brownian motion on compacts and Brownian scaling, we obtain that 
    $0 < \PP(\mathscr{R}(a, t, h_0)$ $\cap [b, c] \neq \emptyset ) < 1$
    if and only if
   $0 < \PP(\{y\ge a: W(y)= \max_{a\le s\le y}W(s)\}$ $\cap [b, c] \neq \emptyset ) < 1$
    where $W$ is a standard Brownian motion starting from $(a, 0)$. By L\'evy's theorem for reflected Brownian motion, see \cite[Theorem~ 2.34]{morters_peres_2010}, $\max_{a\le s\le \cdot}W(s)-W(a)$ has the same law on paths on $\mathscr{C}_0([a, \infty); \R)$ as $|W(\cdot)|$. Hence, we have
    \[
    \PP(\{y\ge a: W(y) = \max_{a\le s\le y}W(s)\}\cap [b, c] \neq \emptyset ) = \PP(\exists y\in [b, c] : W(y) = 0)\,.
    \]
    The latter is clearly seen to be strictly between zero and one, hence the result follows.
\end{proof}

\subsection{Hitting probabilities of the KPZ fixed point and capacity}\label{sec: hit prob KPZ fixed pt cap}

Let $E, F$ be compact subsets of $\R$. Fix $t> 0$ and let $h_0\in \mathscr{I}_t$ and recall the notation for the KPZ fixed point started from $h_0$ at time $t$, $\mathfrak{h}$ (suppressing time dependence). We are interested in giving necessary and sufficient conditions for the positivity of probabilities of the form
\begin{equation}\label{eq: prob non vanish hit}
\PP(\mathfrak{h}(E)\cap F \neq \emptyset) = \PP(\mathrm{Gr}(\mathfrak{h})\text{ hits } E\times F) > 0\,,
\end{equation}
where the graph of the KPZ fixed point is denoted by
$\mathrm{Gr}(\mathfrak{h}) \stackrel{\mathrm{def}
}{=} \{(t, \mathfrak{h}(t)): t\in \R\}\subseteq \R^2$.
In other words, we are interested in the probability the KPZ fixed point on some compact $E$ hits another compact $F$, see Figure \ref{fig: KPZ graph} below.
\begin{figure}[ht]
    \centering
    \includegraphics[width=0.6\textwidth]{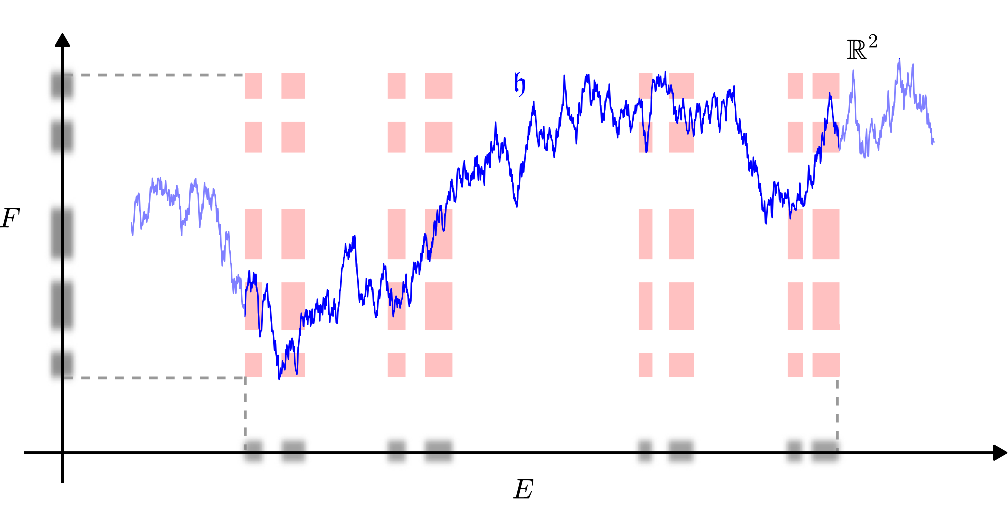}
    \caption{Illustration of the graph of the rescaled increments of the KPZ fixed point in \color{blue}blue \color{black} and the set $E\times F$ in \color{red}red\color{black}. The probability $\mathfrak{h}(E)\cap F \neq \emptyset$ if and only if the graph of the KPZ fixed point, $\mathrm{Gr}(\mathfrak{h})$ hits the red region with positive probability.}
    \label{fig: KPZ graph}
\end{figure}

We will in fact be able to obtain a characterisation for \eqref{eq: prob non vanish hit} by comparing \eqref{eq: prob non vanish hit} to Brownian hitting probabilities using the mutual absolute continuity of increments of the KPZ fixed point, Corollary \ref{cor: KPZ fixed point mut abs cont prod}. 

It is a well-known folklore fact that for a Brownian motion $W$, $W(E)$ intersects $F$ with
positive probability if and only if $E\times F$ has positive thermal
capacity in the sense of Watson \cite{watson1978corrigendum, watson1978thermal}. Now, by Corollary \ref{cor: KPZ fixed point mut abs cont prod} and analytic properties of tail probabilities of the KPZ fixed point, see \cite[Section 4]{quastelkpzfixedpoint2021} (which give a positive density against the Lebesgue measure on $\R$) we obtain the absolute continuity relations
\[
\mathrm{ Law \, of }\; (Y, B(\cdot))\ll \mathrm{ Law \, of }\; (\mathfrak{h}(a), \mathfrak{h}(\cdot)-\mathfrak{h}(a))\ll \mathrm{Leb}\otimes \mu\,,
\]
on $\R\times \mathscr{C}(K;\R)$, for any $K\subseteq \R$ compact and $a< \inf K$, with $B$ a rate two Brownian motion starting at $(a, 0)$ (and independent from the random variable $Y$), the above statements transfer verbatim to the KPZ fixed point and $E,F$ as above.

We first formulate the statement of the characterisation in the case where the Lebesgue measure of $F$ is zero, since otherwise the intersection probability
$\PP(\mathfrak{h}(E)\cap F\neq\emptyset)$ is strictly positive for every non-empty Borel set $E \subset \R$.

\begin{definition}\label{def: thermal cap}
Let $E \subset \R$ and $F \subset \R$ be compact sets.
For $\gamma \ge 0$, the $\gamma$-thermal capacity of the set $E\times F$ is defined by
\begin{equation}\label{def:TC}
C_\gamma(E \times F) = \frac{1}{\inf\{ \mathscr{E}_\gamma(\mu) : \mu\in \mathscr{P}(E\times F)\}},
\end{equation}
where $\mathscr{P}(E\times F)$ denotes the set of all Borel probability measures $\mu$ on $E\times F$ such that $\mu(\{t\} \times F) = 0$ 
for all $t\in E$ and $\mathscr{E}_\gamma(\mu)$ is the $\gamma$-thermal energy of $\mu$ defined by 
\begin{equation}\label{eq: thermal cap}
\mathscr{E}_\gamma(\mu)\stackrel{\mathrm{def}}{=} \int_{E\times F}\int_{E\times F}\frac{\mathrm{e}^{-|x-y|^2/(4|t-s|)}}{
|t-s|^{1/2}\cdot |x-y|^\gamma} \mu(\diff
s \,\diff x) \mu(\diff t \,\diff y)\,.
\end{equation}

\end{definition}

\begin{corollary}\label{cor: thermal cap}
Suppose $F \subset\R$ is compact and has Lebesgue measure 0. Then 
$\PP(\mathfrak{h}(E)\cap F\neq\emptyset)>0$
if and only if $\mathscr{C}_0(E\times F) > 0$.
\end{corollary}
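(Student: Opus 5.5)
The plan is to transfer the corresponding Brownian statement through the two-sided absolute continuity of Corollary \ref{cor: KPZ fixed point mut abs cont prod}. First, a reduction: I will assume $E\subset (a,\infty)$ for some fixed $a<\inf E$. This is harmless, since for any $a$ one can split $E$ into finitely many compact pieces, and both the hitting event and positivity of capacity are handled piece by piece. Write $\mathfrak{h}(y)=\mathfrak{h}(a)+(\mathfrak{h}(y)-\mathfrak{h}(a))$. Then the event $\{\mathfrak{h}(E)\cap F\neq\emptyset\}$ is a Borel event $\Phi^{-1}(\cdot)$ in the pair $(\mathfrak{h}(a),\mathfrak{h}(\cdot)-\mathfrak{h}(a))\in\R\times\mathscr{C}_0([a,\sup E];\R)$, where
\[
\Phi(z,f)=\mathbf{1}\{\exists\, y\in E:\ z+f(y)\in F\}.
\]
Since $E,F$ are compact and $f$ is continuous, this set is closed, hence measurable.

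By Corollary \ref{cor: KPZ fixed point mut abs cont prod}, together with the positivity of the one-point density of $\mathfrak{h}(a)$ (so that one may take $\eta$ equivalent to Lebesgue measure, as noted before Definition \ref{def: thermal cap}), the law of the pair is mutually absolutely continuous with $\mathrm{Leb}\otimes\mu$, or equivalently with $\gamma\otimes\mu$ for $\gamma$ a standard Gaussian. Hence $\PP(\mathfrak{h}(E)\cap F\neq\emptyset)>0$ if and only if
\[
\int_\R \PP\bigl((z+B)(E)\cap F\neq\emptyset\bigr)\,\diff z>0,
\]
where $B$ is a rate two Brownian motion started at $(a,0)$. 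The integral is positive exactly when the hitting probability is positive for a positive-measure set of starting heights $z$.

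Next, I will check that this is equivalent to positivity for one fixed starting point, say a Brownian motion $X$ started at an independent Gaussian height at time $a$. The integrand above is the hitting probability for $X$ started at height $z$. Conditioning on $X(a)$ gives $\PP(X(E)\cap F\neq\emptyset)=\int \PP_z(\cdot)\,\gamma(\diff z)$, so one expression is positive if and only if the other is.

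It then remains to invoke the Brownian (heat equation) characterisation: for a Brownian motion with nondegenerate initial law and $F$ Lebesgue-null, $\PP(X(E)\cap F\neq\emptyset)>0$ if and only if $E\times F$ has positive thermal capacity. This is Watson's result, in the form of the Khoshnevisan--Xiao statement cited in the paper. Matching the rate two normalisation gives exactly the Gaussian kernel $e^{-|x-y|^2/(4|t-s|)}|t-s|^{-1/2}$, so it is the $\gamma=0$ energy in \eqref{eq: thermal cap}. The condition $\mu(\{t\}\times F)=0$ and the hypothesis $\mathrm{Leb}(F)=0$ are the standing assumptions of that theorem.

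The main obstacle is this last citation: making sure the folklore Brownian statement applies with the precise normalisation and set-up used here. Two things need checking. First, the statement is usually given for Brownian motion started at a fixed point, with $E\subset(0,\infty)$. For such a start, positivity for a.e. $z$ versus for a fixed $z$ differs only by the smoothing at time $a<\inf E$: the law of $B(\inf E)$ has a positive density regardless of the starting height. So, after Markov at time $a'\in(a,\inf E)$, both reduce to the same quantity. Second, the capacity in Definition \ref{def: thermal cap} must match the one in the cited theorem up to constants. The first step, the transfer via Corollary \ref{cor: KPZ fixed point mut abs cont prod}, is routine once measurability of the hitting event is noted.
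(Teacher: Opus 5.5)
Your argument follows the paper's route. You transfer the hitting event through the two absolute continuity relations of Corollary~\ref{cor: KPZ fixed point mut abs cont prod}, condition on the starting height, and apply \cite[Proposition 1.4]{khoshnevisan2015brownianmotionthermal} after matching the rate-two normalisation.

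One step is overstated. Positivity of the one-point density of $\mathfrak{h}(a)$ does not let you take $\eta$ equivalent to Lebesgue measure. Equivalence of the first marginal to Lebesgue says nothing about the conditional law of the increments given $\mathfrak{h}(a)=z$ for $z$ outside the effective support of $\eta$. So you have not established $\mathrm{Leb}\otimes\mu\ll\mathrm{Law}\,(\mathfrak{h}(a),\mathfrak{h}(\cdot)-\mathfrak{h}(a))$, and the claimed mutual equivalence is unjustified.

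You do not need it, though. Whether $\PP\bigl((z+B)(E)\cap F\neq\emptyset\bigr)>0$ does not depend on $z$: this follows from your own smoothing argument at a time $a'\in(a,\inf E)$, or directly from translation invariance of the thermal energy. So argue the two directions separately. If $\mathscr{C}_0(E\times F)>0$, use $\eta\otimes\mu\ll\mathrm{Law}$. If $\mathscr{C}_0(E\times F)=0$, use $\mathrm{Law}\ll\mathrm{Leb}\otimes\mu$. This is exactly how the paper splits the proof.

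Separately, the remark about splitting $E$ into pieces is unnecessary: $E$ is compact, so any $a<\inf E$ works.
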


\begin{proof}
    First assume $\mathscr{C}_0(E\times F) > 0$. Then, observe for any $a < \inf E$,
    \[
\PP(\mathfrak{h}(E)\cap F\neq\emptyset) = \PP((\mathfrak{h}(E)-\mathfrak{h}(a))\cap (F-\mathfrak{h}(a))\neq\emptyset) = \PP(1/\sqrt{2}(\mathfrak{h}(E)-\mathfrak{h}(a))\cap 1/\sqrt{2}(F-\mathfrak{h}(a))\neq\emptyset)\,.
\]
By Corollary, \ref{cor: KPZ fixed point mut abs cont prod}, we have for some probability measure $\eta$ on $\R$,
\[
\eta\otimes \mu \ll \mathrm{ Law\, of } (\mathfrak{h}(a) , \mathfrak{h}(\cdot)-\mathfrak{h}(a))
\]
on $\R\times \mathscr{C}(E; \R)$. Hence, there exist a random variable $Y$ with law $\eta$ and a standard Brownian motion $W$ starting from $(a, 0)$ (mutually independent) such that $(Y, W(\cdot))$ is absolutely continuous with respect to $(\mathfrak{h}(a) , 1/\sqrt{2}(\mathfrak{h}(\cdot)-\mathfrak{h}(a)))$ on $\R\times \mathscr{C}(E; \R)$. 

Thus, to prove 
$\PP(\mathfrak{h}(E)\cap F\neq\emptyset)>0$,
it suffices to prove $\PP(W(E)\cap 1/\sqrt{2}(F-Y)\neq\emptyset)> 0$.
Now, conditionally on $Y$, by \cite[Proposition 1.4]{khoshnevisan2015brownianmotionthermal}, this is true if and only if there exists a probability measure $\sigma$ on $(E-a)\times 1/\sqrt{2}(F-Y)$ such that
\[
\int_{(E-a)\times 1/\sqrt{2}(F-Y)}\int_{(E-a)\times 1/\sqrt{2}(F-Y)}\frac{\mathrm{e}^{-|x-y|^2/(2|t-s|)}}{
|t-s|^{1/2}} \sigma(\diff
s \,\diff x) \sigma(\diff t \,\diff y) > 0\,.
\]
Now, by translation and scaling this is true if and only if there exists some probability measure $\tilde{\sigma}$ on $E\times F$ such that
\[
\int_{E\times F}\int_{E \times F}\frac{\mathrm{e}^{-|x-y|^2/(4|t-s|)}}{
|t-s|^{1/2}} \tilde{\sigma}(\diff
s \,\diff x) \tilde{\sigma}(\diff t \,\diff y) > 0\,,
\]
or equivalently, if and only if $\mathscr{C}_0(E\times F) > 0$.

For the converse implication, suppose that $\mathscr{C}_0(E\times F)=0$, then use the absolute continuity relation from Corollary \ref{cor: KPZ fixed point mut abs cont prod},
$\mathrm{ Law \, of }\; (\mathfrak{h}(a), \mathfrak{h}(\cdot)-\mathfrak{h}(a))\ll \mathrm{Leb}\otimes \mu$
for $a< \inf E$, on $\R\times \mathscr{C}(E; \R)$ and argue entirely analogously, using the characterisation in \cite[Proposition 1.4]{khoshnevisan2015brownianmotionthermal}.
\end{proof}

The condition in Corollary \ref{cor: thermal cap} can also be recast in terms of a geometric condition on the set $E\times F$ involving a certain kind of Hausdorff dimension, which we turn to now. 

Let us define $\varrho$ to be
the \emph{parabolic
metric} on $\R^2$, that is,
\begin{equation*}
\varrho \bigl( (s,x); (t,y) \bigr) \stackrel{\mathrm{def}}{=} \max \bigl( |t-s|^{1/2},
|x-y| \bigr).
\end{equation*}
On the metric space $\mathbf{S} = (\R^2,\varrho)$, also called
\textit{space-time} as we distinguish the spatial and temporal variables, we can define a notion of Hausdorff dimension associated to it. More precisely, for any $\alpha\ge 0$ and $A \subset \R \times \R$, the $\alpha$-dimensional parabolic Hausdorff measure of $A$ is defined by
\[
\mathscr{H}^\alpha(A; \varrho) = \lim_{\delta \to 0} \inf\left\{ \sum_{n=1}^\infty (\mathrm{diam}_\varrho \,U_n)^\alpha : \text{open cover } (U_n)_{n=1}^\infty \text{ of } A \,, \,\sup_{n\ge 1}\, (\mathrm{diam}_\varrho \,U_n) \le \delta \right\}\,.
\]
The parabolic Hausdorff dimension of $A$ is defined by
\[
\mathrm{dim}_{\mathscr{H}}(A; \varrho) \stackrel{\mathrm{def}}{=} \inf\{ \alpha \ge 0 : \mathscr{H}^\alpha(A; \varrho) = 0 \}.
\]

\begin{corollary}[Intersection probabilities]\label{cor: int prob haus dim}
    If $\mathrm{dim}_{\mathscr{H}}(E\times F;\varrho)>1$ then $\PP(\mathfrak{h}(E)\cap F \neq \emptyset ) > 0$. If $\mathrm{dim}_{\mathscr{H}}(E\times F;\varrho)<1$ then $\PP(\mathfrak{h}(E)\cap F\neq \emptyset) = 0$.
\end{corollary}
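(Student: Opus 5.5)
The plan is to transfer the corresponding Brownian statement through the two-sided absolute continuity of Corollary~\ref{cor: KPZ fixed point mut abs cont prod}, exactly as in the proof of Corollary~\ref{cor: thermal cap}.

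\emph{Brownian input.} For a standard Brownian motion $W$ and compact $E\times F$, the following is known from \cite{khoshnevisan2015brownianmotionthermal}, where the thermal capacity characterisation is compared with parabolic Hausdorff dimension via a Frostman-type argument. If $\mathrm{dim}_{\mathscr{H}}(E\times F;\varrho)>1$, then $\PP(W(E)\cap F\neq\emptyset)>0$. If $\mathrm{dim}_{\mathscr{H}}(E\times F;\varrho)<1$, then this probability is zero.

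\emph{Reduction to fixed shifts and scalings.} Fix $a<\inf E$. Translating $E\times F$ in space-time by $(-a,-y)$ preserves $\varrho$-dimension. Multiplying the spatial coordinate by $1/\sqrt2$ is bi-Lipschitz for $\varrho$, so it also preserves $\varrho$-dimension. Hence the dimension hypothesis holds for $(E-a)\times \tfrac{1}{\sqrt2}(F-y)$ for every $y\in\R$, uniformly in $y$.

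\emph{Positive case.} Suppose $\mathrm{dim}_{\mathscr{H}}(E\times F;\varrho)>1$.
\begin{enumerate}
\item By Corollary~\ref{cor: KPZ fixed point mut abs cont prod}, $(Y,W)$ with law $\eta\otimes\mu'$ is absolutely continuous with respect to $(\mathfrak{h}(a),\tfrac{1}{\sqrt2}(\mathfrak{h}(\cdot)-\mathfrak{h}(a)))$. Here $\mu'$ is standard Wiener measure and $Y$ is independent of $W$.
\item The event $\{\mathfrak{h}(E)\cap F\neq\emptyset\}$ is the event $\{W(E)\cap \tfrac{1}{\sqrt2}(F-Y)\neq\emptyset\}$ read through this map.
\item Conditionally on $Y=y$, the Brownian input gives $\PP(W(E)\cap \tfrac{1}{\sqrt2}(F-y)\neq\emptyset)>0$ for every $y$. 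Integrating against $\eta$ gives positive probability for $(Y,W)$.
\item Absolute continuity then forces $\PP(\mathfrak{h}(E)\cap F\neq\emptyset)>0$.
\end{enumerate}

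\emph{Null case.} Suppose $\mathrm{dim}_{\mathscr{H}}(E\times F;\varrho)<1$. Use the other relation of Corollary~\ref{cor: KPZ fixed point mut abs cont prod}, namely that the law of $(\mathfrak{h}(a),\mathfrak{h}(\cdot)-\mathfrak{h}(a))$ is absolutely continuous with respect to $\mathrm{Leb}\otimes\mu$. By Fubini, it suffices to show $\PP(W(E)\cap\tfrac{1}{\sqrt2}(F-y)\neq\emptyset)=0$ for Lebesgue-a.e.\ $y$. This holds for every $y$ by the reduction step and the Brownian input. Hence the event is null under $\mathrm{Leb}\otimes\mu$, and therefore null for $\mathfrak{h}$.

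\emph{Main obstacle.} The main difficulty is pinning down the Brownian dimension criterion itself.

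For the upper bound (dimension $<1$ implies zero probability), I would argue by covering. Cover $E\times F$ by parabolic boxes of $\varrho$-size $r_n$ with $\sum r_n^{\alpha}$ small for some $\alpha<1$. Each box has time side $r_n^2$ and space side $r_n$. The probability that the graph of $W$ hits such a box, away from time $a$, is $O(r_n)$ by Gaussian estimates. Summing gives $O(\sum r_n)\le O(\sum r_n^{\alpha})$, which can be made arbitrarily small.

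For the lower bound (dimension $>1$ implies positive probability), I would argue via energy. Frostman's lemma in $(\R^2,\varrho)$ gives a measure $\sigma$ with $\sigma(B_\varrho(z,r))\lesssim r^{\beta}$ for some $\beta>1$. The thermal kernel $\mathrm e^{-|x-y|^2/4|t-s|}|t-s|^{-1/2}$ is bounded by $C\varrho^{-1}$. Therefore $\mathscr{E}_0(\sigma)<\infty$ and $C_0(E\times F)>0$, and Corollary~\ref{cor: thermal cap} applies directly.

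The condition $\sigma(\{t\}\times F)=0$ in Definition~\ref{def: thermal cap} holds automatically. Indeed, $\{t\}\times F$ is covered by $\varrho$-balls of radius $r$ numbering about $r^{-1}$. Each has $\sigma$-mass $\lesssim r^{\beta}$, so the total is $\lesssim r^{\beta-1}\to0$.

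The exceptional case $\mathrm{Leb}(F)>0$ is trivial in the positive direction, so no further work is needed there.
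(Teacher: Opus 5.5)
Your proposal is correct and follows the paper's route. You transfer the Brownian parabolic-dimension hitting criterion (page~407 of Khoshnevisan--Xiao) through both directions of Corollary~\ref{cor: KPZ fixed point mut abs cont prod}, exactly as in Corollary~\ref{cor: thermal cap}; your covering and Frostman--energy sketches spell out the Brownian input that the paper only cites.
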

\begin{proof}
Proceed as in the proof of Corollary \ref{cor: thermal cap} and use Corollary \ref{cor: KPZ fixed point mut abs cont prod} to argue as in page 407 of \cite{khoshnevisan2015brownianmotionthermal} applying the characterisation of the parabolic Hausdorff dimension in terms of thermal capacity.
\end{proof}
In the case where the above probability does not vanish, we are interested in describing the Hausdorff dimension $\mathrm{dim}_{\mathscr{H}}(\mathfrak{h}(E)\cap F)$ of
the random intersection set $\mathfrak{h}(E)\cap F$.
In particular, we seek only to compute
the $L^\infty(\PP)$-norm of that Hausdorff dimension, since in general it is not constant.

Below for $F\subseteq \R$ compact, we denote its Lebesgue measure by $|F|$. We also denote by $\mathrm{dim}_\mathscr{H}$, the Hausdorff dimension on $\R$ with respect to the Euclidean metric (in any dimension).

\begin{corollary}\label{cor: positiveLeb}
If $F \subset\R$  is compact and
$|F|>0$, then
\begin{equation}
\label{eq: posiLeb} \norm{ \mathrm{dim}_{\mathscr{H}} \bigl( \mathfrak{h}(E)\cap F \bigr)}_{L^\infty(\PP)} = 2 \mathrm{dim}_{\mathscr{H}}(E)\land 1.
\end{equation}
If in addition, $\mathrm{dim}_{\mathscr{H}}(E) > 1/2$, then
$\PP(|\mathfrak{h}(E)\cap F|>0)>0$.
\end{corollary}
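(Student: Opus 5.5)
The plan is to transfer the corresponding Brownian statement to the KPZ fixed point via the two-sided comparison of Corollary~\ref{cor: KPZ fixed point mut abs cont prod}, exactly as in the proof of Corollary~\ref{cor: thermal cap}.

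\textbf{Brownian input.} For a Brownian motion $W$ and compact $F$ with $|F|>0$, the results of Khoshnevisan--Xiao in \cite{khoshnevisan2015brownianmotionthermal} give
\[
\norm{\mathrm{dim}_{\mathscr{H}}(W(E)\cap F)}_{L^\infty(\PP)} = 2\,\mathrm{dim}_{\mathscr{H}}(E)\land 1 ,
\]
and $\PP(|W(E)\cap F|>0)>0$ whenever $\mathrm{dim}_{\mathscr{H}}(E)>1/2$. This essential supremum is unchanged if $F$ is replaced by a translate $F-y$ or a dilate. Both quantities depend on $W$ only through an event, or a measurable functional, of the path on $E$.

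\textbf{Reduction.} Fix $a<\inf E$ and write
\[
\mathfrak{h}(E)\cap F = \mathfrak{h}(a) + \bigl[(\mathfrak{h}(\cdot)-\mathfrak{h}(a))(E)\cap (F-\mathfrak{h}(a))\bigr].
\]
Translation does not change Hausdorff dimension or Lebesgue measure. Hence the quantities of interest are measurable functionals $\Phi$ of the pair $(\mathfrak{h}(a),\mathfrak{h}(\cdot)-\mathfrak{h}(a))\in\R\times\mathscr{C}(E;\R)$. The rate two normalisation is removed by the factor $1/\sqrt2$, which only dilates $F$.

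\textbf{Upper bound.} Suppose $\{\Phi>2\dim_{\mathscr H}(E)\land 1\}$ had positive probability under the KPZ law. Since that law is absolutely continuous with respect to $\mathrm{Leb}\otimes\mu$, by Corollary~\ref{cor: KPZ fixed point mut abs cont prod}, Fubini would give a set of $y$ of positive Lebesgue measure for which $\dim_{\mathscr H}(W(E)\cap (F-y)/\sqrt2)$ exceeds the bound with positive probability. This contradicts the Brownian statement.

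\textbf{Lower bound.} Use $\eta\otimes\mu\ll$ Law, as in the proof of Corollary~\ref{cor: thermal cap}. Conditionally on $Y=y$, the event $\{\dim_{\mathscr H}(W(E)\cap (F-y)/\sqrt2)>2\dim_{\mathscr H}(E)\land 1-\delta\}$ has positive probability for every $y$, because the Brownian result applies to every translate. Hence it has positive $\eta\otimes\mu$ measure, and so positive probability for the KPZ fixed point. Letting $\delta\downarrow 0$ gives equality of the $L^\infty$ norms. The second claim follows identically, with the event $\{|W(E)\cap (F-y)/\sqrt2|>0\}$.

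\textbf{Main obstacle.} The difficult step is measurability and uniformity. One must check that the Brownian essential-supremum statement holds for every translate of $F$, not just for almost every one, so that the lower bound can be integrated against an arbitrary $\eta$. One must also check that $\Phi$ is jointly measurable, to justify Fubini. I would handle the first point by noting that the Khoshnevisan--Xiao lower bound uses only $|F|>0$, which is translation invariant. For the second, I would express the Hausdorff-dimension events through countable covers by rational intervals.
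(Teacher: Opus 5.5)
Your proposal is correct and follows essentially the same route as the paper. Like the paper, you invoke \cite[Proposition 1.2]{khoshnevisan2015brownianmotionthermal} for Brownian motion and transfer both the almost-sure upper bound and the positive-probability lower bound (and the positive-measure claim) through the two-sided comparison of Corollary~\ref{cor: KPZ fixed point mut abs cont prod}, as in Corollary~\ref{cor: thermal cap}, using that translation and the $1/\sqrt{2}$ dilation preserve Hausdorff dimension and positivity of Lebesgue measure. Your explicit handling of the translates $F-y$ (immediate, since $|F-y|=|F|>0$) and of the Fubini/measurability step fills in details that the paper's short sketch leaves implicit.
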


\begin{proof}
Use \cite[Proposition 1.2]{khoshnevisan2015brownianmotionthermal} to obtain that for a standard Brownian motion starting from $(a, 0)$, $a< \inf E$, with positive probability for any $\varepsilon > 0$,
\[
\PP(\mathrm{dim}_{\mathscr{H}} \bigl( W(E)\cap F \bigr)\ge 2 \mathrm{dim}_{\mathscr{H}}(E)\land 1-\varepsilon)> 0
\]
and almost surely, $\mathrm{dim}_{\mathscr{H}} \bigl( W(E)\cap F \bigr)\le 2 \mathrm{dim}_{\mathscr{H}}(E)\land 1$. Then, argue as in Corollary \ref{cor: thermal cap} and note that the rescaling therein is a bi-Lipschitz homeomorphism, thereby preserving the Hausdorff dimension almost surely.
\end{proof}

The remaining case, and arguably most interesting case,
is when $F$ has Lebesgue measure 0, that is $|F|=0$.
The following result gives a suitable (though
quite complicated) formula that also generalises for Brownian motion in higher dimensions. The proof is entirely analogous to that of Corollary \ref{cor: positiveLeb}, hence omitted.

\begin{corollary}\label{cor: dimh}
If $F \subset\R$ is compact and $|F|=0$, then
\begin{equation}
\label{eq:dimh} \norm{ \mathrm{dim}_{\mathscr{H}} \bigl( \mathfrak{h}(E)\cap F \bigr)}_{L^\infty(\PP)} = \sup \Bigl\{ \gamma>0 : C_\gamma(E \times F) >0 \Bigr\}\,.
\end{equation}
\end{corollary}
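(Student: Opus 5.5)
The plan is to transfer the known Brownian result through the two-sided comparison in Corollary~\ref{cor: KPZ fixed point mut abs cont prod}, exactly as in the proofs of Corollaries~\ref{cor: thermal cap} and~\ref{cor: positiveLeb}. The Brownian input is the analogue of \eqref{eq:dimh} for a standard Brownian motion $W$ started at $(a,0)$ with $a<\inf E$, which is \cite[Theorem 1.3 / Proposition 1.2]{khoshnevisan2015brownianmotionthermal} in the form we need. It states that for every deterministic compact $F'$ with $|F'|=0$,
\[
\norm{\mathrm{dim}_{\mathscr{H}}\bigl(W(E)\cap F'\bigr)}_{L^\infty(\PP)} = \sup\{\gamma>0: C_\gamma((E-a)\times F')>0\},
\]
with the capacity taken for the Gaussian kernel $\mathrm{e}^{-|x-y|^2/(2|t-s|)}$.

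First I rescale. Write $\mathfrak{h}(E)\cap F = \mathfrak{h}(a) + \sqrt{2}\bigl(\tfrac{1}{\sqrt{2}}(\mathfrak{h}(E)-\mathfrak{h}(a))\cap \tfrac{1}{\sqrt2}(F-\mathfrak{h}(a))\bigr)$. The map $z\mapsto \mathfrak{h}(a)+\sqrt2 z$ is bi-Lipschitz, so the Hausdorff dimension of the intersection equals that of $\tfrac1{\sqrt2}(\mathfrak{h}(E)-\mathfrak{h}(a))\cap \tfrac1{\sqrt2}(F-\mathfrak{h}(a))$. This is a measurable functional $\Phi$ of the pair $(\mathfrak{h}(a),\mathfrak{h}(\cdot)-\mathfrak{h}(a))$ on $\R\times\mathscr{C}(E;\R)$.

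Next I check that the capacity condition does not depend on the shift and scaling. Translating $F$ by $-Y$, time by $-a$, and scaling space by $1/\sqrt2$ sends the kernel $\mathrm{e}^{-|x-y|^2/(2|t-s|)}|x-y|^{-\gamma}$ on the rescaled set to a constant multiple of the kernel in \eqref{eq: thermal cap} on $E\times F$. Hence $C_\gamma((E-a)\times\tfrac1{\sqrt2}(F-y))>0$ if and only if $C_\gamma(E\times F)>0$, for every $y\in\R$. So $d^*:=\sup\{\gamma: C_\gamma(E\times F)>0\}$ is also the Brownian $L^\infty$ value for every shifted target $\tfrac1{\sqrt2}(F-y)$. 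Each shifted target still has Lebesgue measure zero.

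For the upper bound I use $\mathrm{Law}(\mathfrak{h}(a),\mathfrak{h}-\mathfrak{h}(a))\ll\mathrm{Leb}\otimes\mu$. The event $\{\Phi>d^*\}$ has $(\mathrm{Leb}\otimes\mu)$-measure zero by Fubini: for each fixed $y$, the Brownian result gives $\PP(\dim(W(E)\cap \tfrac1{\sqrt2}(F-y))>d^*)=0$. Hence $\Phi\le d^*$ almost surely.

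For the lower bound I use $\eta\otimes\mu\ll\mathrm{Law}(\mathfrak{h}(a),\mathfrak{h}-\mathfrak{h}(a))$. Fix $\varepsilon>0$. For each $y$, the Brownian result gives $\PP(\dim(W(E)\cap\tfrac1{\sqrt2}(F-y))>d^*-\varepsilon)>0$. Integrating against $\eta$ makes the event $\{\Phi>d^*-\varepsilon\}$ positive for $\eta\otimes\mu$, and therefore positive for the KPZ law. Since $\varepsilon$ is arbitrary, the $L^\infty$ norm is at least $d^*$.

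The main obstacle is measurability of $(y,w)\mapsto \dim(w(E)\cap\tfrac1{\sqrt2}(F-y))$, which Fubini needs. I would handle it by noting that $w(E)\cap(\cdot)$ depends on $(y,w)$ as a compact set in an upper semicontinuous way in the Hausdorff metric. Hausdorff dimension is Borel (in fact universally measurable suffices) on the hyperspace of compacts, so the composition is universally measurable, which is enough for both absolute continuity transfers.

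A secondary point is that the Brownian theorem must hold for the shifted target uniformly in $y$. This is automatic, since the theorem applies to any deterministic compact null set.
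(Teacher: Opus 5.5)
Your proposal is correct and is essentially the paper's argument. The paper omits this proof as analogous to Corollary~\ref{cor: positiveLeb}: it transfers the Khoshnevisan--Xiao Brownian formula through the two-sided comparison $\eta\otimes\mu\ll\mathrm{Law}(\mathfrak{h}(a),\mathfrak{h}(\cdot)-\mathfrak{h}(a))\ll\mathrm{Leb}\otimes\mu$ of Corollary~\ref{cor: KPZ fixed point mut abs cont prod}, together with the bi-Lipschitz rescaling and the invariance of $\{C_\gamma>0\}$ under the shift and scaling, and your Fubini argument over the shift $y$ and the measurability of the dimension functional simply spell out what the paper leaves implicit.
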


\subsection{Geometric properties of the Airy sheet images}

Having established the absolute continuity of additive Brownian motion against the Airy sheet in Theorem \ref{thm: abs cont airy sheet} we compute essential suprema of Hausdorff dimensions of images of compact sets under the Airy sheet and give conditions for the positivity of their Lebesgue measure in terms of the one–dimensional Bessel–Riesz capacity, which we now define. 

\begin{definition}[Bessel-Riesz capacity]\label{def: bessel-riesz cap}
Let $E \subset \R^2$ be compact. The (one-dimensional) \emph{Bessel}-\emph{Riesz} capacity of the set $E\subset \R^2$ is defined by
\begin{equation}\label{def: BR C}
C_{\mathrm{BR}}(E) = \frac{1}{\inf\{ \mathscr{E}_\mathrm{BR}(\mu) : \mu\in \mathscr{P}(E)\}}\in [0, +\infty],
\end{equation}
where $\mathscr{P}(E)$ denotes the set of all Borel probability measures $\mu$ on $E$
and $\mathscr{E}_\mathrm{BR}(\mu)$ is the `energy' of $\mu$ defined by 
\begin{equation*}\label{eq: bessel riesz cap}
\mathscr{E}_\mathrm{BR}(\mu)\stackrel{\mathrm{def}}{=} \int_{E}\int_{E}\frac{1}{
|x-y|^{1/2}} \mu(\diff x) \mu(\diff y)\,.
\end{equation*}
\end{definition}

We now compute the essential supremum of the Euclidean Hausdorff dimension of Airy sheet images using the absolute continuity result in Theorem \ref{thm: abs cont airy sheet} and the spatial regularity of the Airy sheet, \cite[Proposition 10.5]{DOV}. This is the content of the following corollary.

\begin{corollary}\label{cor: haus dim airy sheet image}
     Let $E\subset \R^2$ be bounded Borel and $\mathcal{S}$ an Airy sheet. Then we have
     \[
     \norm{\mathrm{dim}_{\mathscr{H}}(\mathcal{S}(E))}_{L^\infty(\PP)} = 1 \land 2\mathrm{dim}_{\mathscr{H}}(E)\,.
     \]
\end{corollary}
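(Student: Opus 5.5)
The proof has two halves: an almost sure upper bound from regularity, and a lower bound on the essential supremum from Theorem \ref{thm: abs cont airy sheet}.

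\emph{Upper bound.} The bound $\mathrm{dim}_{\mathscr{H}}(\mathcal{S}(E))\le 1$ is trivial, since $\mathcal{S}(E)\subseteq\R$. By \cite[Proposition 10.5]{DOV}, almost surely $\mathcal{S}$ is locally H\"older continuous of every order $\alpha<1/2$ on compacts. Since $E$ is bounded, $\mathcal{S}$ restricted to a compact neighbourhood of $E$ is $\alpha$-H\"older. The standard covering argument then gives $\mathrm{dim}_{\mathscr{H}}(\mathcal{S}(E))\le \mathrm{dim}_{\mathscr{H}}(E)/\alpha$: cover $E$ by sets $U_n$ of diameter $\le\delta$, whose images have diameter $\le C(\mathrm{diam}\,U_n)^\alpha$. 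Letting $\alpha\uparrow 1/2$ along a countable sequence yields, almost surely, $\mathrm{dim}_{\mathscr{H}}(\mathcal{S}(E))\le 1\land 2\mathrm{dim}_{\mathscr{H}}(E)$. Hence the $L^\infty(\PP)$ norm is at most the right-hand side.

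\emph{Lower bound.} Fix a compact rectangle $K\supseteq \overline{E}$, and let $B,B'$ be independent rate two Brownian motions as in Theorem \ref{thm: abs cont airy sheet}. Set $X(x,y)=B(x)+B'(y)$, which is additive Brownian motion. For $\gamma<1\land 2\mathrm{dim}_{\mathscr{H}}(E)$, consider the event
\[
D_\gamma=\{f\in\mathscr{C}_0(K;\R): \mathrm{dim}_{\mathscr{H}}(f(E))\ge\gamma\}.
\]
It is translation invariant under $f\mapsto f+c$, so it is unaffected by the centring $\mathcal{S}(\cdot,\cdot')-\mathcal{S}(\inf K,\inf K)$. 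Since $\mathcal{S}(E)$ is a translate of the image of $E$ under the centred sheet, Theorem \ref{thm: abs cont airy sheet} gives the implication
\[
\PP(X\in D_\gamma)>0\ \Longrightarrow\ \PP(\mathcal{S}\in D_\gamma)>0.
\]
It therefore suffices to show $\PP(\mathrm{dim}_{\mathscr{H}}(X(E))\ge\gamma)>0$. For this I would invoke the known image results for additive Brownian motion (cf. \cite{khoshnevisan1998browniansheetimagesbesselriesz}), or prove it directly by the energy method. Frostman's lemma provides a probability measure $\sigma$ on $E$ with finite $\gamma/2$-energy. Push it forward to $\nu=X_*\sigma$, and bound
\[
\mathbb{E}\iint|u-v|^{-\gamma}\,\nu(\diff u)\,\nu(\diff v)=\iint \mathbb{E}|X(p)-X(q)|^{-\gamma}\,\sigma(\diff p)\,\sigma(\diff q).
\]
Here $X(p)-X(q)$ is centred Gaussian with variance $2(|p_1-q_1|+|p_2-q_2|)\asymp|p-q|$, and $\gamma<1$, so $\mathbb{E}|X(p)-X(q)|^{-\gamma}\asymp|p-q|^{-\gamma/2}$. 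The expected energy is therefore finite. Consequently, almost surely $\nu$ has finite $\gamma$-energy, and so $\mathrm{dim}_{\mathscr{H}}(X(E))\ge\gamma$ almost surely, in particular with positive probability.

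\emph{Conclusion and main obstacle.} Taking $\gamma\uparrow 1\land 2\mathrm{dim}_{\mathscr{H}}(E)$ gives the matching lower bound on the essential supremum. I expect the main obstacle to be measurability. The event $D_\gamma$ is not obviously Borel in $\mathscr{C}_0(K;\R)$; it is likely only analytic, hence universally measurable. This is what allows absolute continuity to be applied, since absolute continuity of laws extends to universally measurable sets via the completions. Alternatively, I would replace $D_\gamma$ by the Borel event that a fixed countable family of pushforward energies is finite, which directly implies the dimension lower bound. I would also take $E$ compact first, using Frostman's lemma for Borel sets, to avoid subtleties.
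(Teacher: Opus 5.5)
Your proof is correct and follows the paper's argument. It gets an almost sure upper bound from the H\"older-$1/2^-$ regularity in \cite[Proposition 10.5]{DOV}, and a lower bound on the essential supremum transferred from additive Brownian motion through Theorem~\ref{thm: abs cont airy sheet} via a translation-invariant event.

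There are only two differences. First, you prove the additive Brownian motion dimension bound directly by the Frostman/energy computation; the paper instead cites \cite[Theorem 6.11]{xiao2009sample}, and note that the Khoshnevisan--Xiao reference you mention concerns Lebesgue measure of images rather than their dimension. Second, you explicitly handle the measurability of the dimension event (e.g.\ via a Borel pushforward-energy event), which the paper passes over.
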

\begin{proof}
    By translation symmetries of the Airy sheet, \eqref{eq: Airy sheet symmetry}, we can assume without loss of generality $E\subset (0, \infty)^2$.
    
    The almost sure upper bound on the random variable $\mathrm{dim}_{\mathscr{H}}(\mathcal{S}(E))$ can be readily established from the H\"{o}lder $1/2-$ regularity of the Airy sheet, \cite[Proposition 10.5]{DOV}, which means the dimension of the image can at most double.

    For the lower bound, \cite[Theorem 6.11, page 181]{xiao2009sample} gives almost surely, $ \mathrm{dim}_{\mathscr{H}}(X(E)) = 1\land 2\mathrm{dim}_{\mathscr{H}}(E)$ where $X$ denotes an additive Brownian motion. Thus, by Theorem \ref{thm: abs cont airy sheet} and translation invariance of Hausdorff dimension, we have with positive probability $ \mathrm{dim}_{\mathscr{H}}(\mathcal{S}(E)) = 1\land  2\mathrm{dim}_{\mathscr{H}}(E)$,
     and so we deduce $\norm{\mathrm{dim}_{\mathscr{H}}(\mathcal{S}(E))}_{L^\infty(\PP)} \ge 1\land 2\mathrm{dim}_{\mathscr{H}}(E)$, which yields the result.
\end{proof}

We now state a condition for when the above random sets have positive Lebesgue measure.

\begin{corollary}\label{cor: bessel-riesz cap pos}
    Let $E\subset \R^2$ be compact and $\mathcal{S}$ an Airy sheet. If $C_{\mathrm{BR}}(E)> 0$, then $\mathbb{P}(\big|\mathcal{S}(E)\big|>0)>0$.
\end{corollary}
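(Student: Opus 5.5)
The plan is to transfer the corresponding statement for additive Brownian motion to the Airy sheet, using the one-sided comparison of Theorem \ref{thm: abs cont airy sheet}. This is the same pattern as the lower bound in Corollary \ref{cor: haus dim airy sheet image}. Positivity of Lebesgue measure is an event whose probability we only need to be nonzero. So absolute continuity of additive Brownian motion with respect to the centred Airy sheet is exactly the direction we need.

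\textbf{Step 1 (reduction).} By the translation symmetries \eqref{eq: Airy sheet symmetry} we may assume $E\subset(0,\infty)^2$, and enlarge $E$ to a compact $K\supseteq E$ of the form used in Theorem \ref{thm: abs cont airy sheet}. The set $\mathcal{S}(E)$ has positive Lebesgue measure if and only if the translate $\mathcal{S}(E)-\mathcal{S}(\inf K,\inf K)$ does. Hence the event $\{|\mathcal{S}(E)|>0\}$ equals $\{\mathcal{S}(\cdot,\cdot')-\mathcal{S}(\inf K,\inf K)\in A_E\}$, where
\[
A_E=\{f\in\mathscr{C}_0(K;\R): |f(E)|>0\}.
\]

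\textbf{Step 2 (measurability).} We check that $A_E$ is (universally) measurable in $\mathscr{C}_0(K;\R)$. Since $E$ is compact and $f$ is continuous, $f(E)$ is compact. The map $f\mapsto |f(E)|$ is upper semicontinuous in the uniform topology, because $f_n\to f$ uniformly gives $f_n(E)$ inside small neighbourhoods of $f(E)$. Therefore $A_E$ is Borel.

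\textbf{Step 3 (input from additive Brownian motion).} Let $X(s,s')=B(s)+B'(s')$ be additive Brownian motion. We invoke the classical result (Khoshnevisan--Xiao, cf.\ \cite{khoshnevisan1998browniansheetimagesbesselriesz} and \cite{xiao2009sample}) that $\PP(|X(E)|>0)>0$ whenever $C_{\mathrm{BR}}(E)>0$. Here the Bessel--Riesz kernel $|x-y|^{-1/2}$ in dimension two is precisely the one matched to additive Brownian motion, which has index $1/2$ in $\R^2$ with one-dimensional target. If only a version with rate-one Brownian motions is available, we rescale. Replacing $X$ by $\sqrt2 X$ leaves the event $\{|X(E)|>0\}$ unchanged. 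Moreover, the time parameter is not rescaled, so the capacity condition on $E$ is untouched.

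\textbf{Step 4 (transfer).} Step 3 gives $\PP(X-X(\inf K,\inf K)\in A_E)>0$. Theorem \ref{thm: abs cont airy sheet} says the law of $X$ on $\mathscr{C}_0(K;\R)$ is absolutely continuous with respect to that of $\mathcal{S}(\cdot,\cdot')-\mathcal{S}(\inf K,\inf K)$. By the contrapositive, the Airy sheet event has positive probability, which concludes the proof.

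The main obstacle is Step 3. We need to confirm that the cited potential theory for additive Brownian motion applies with exactly this one-dimensional Bessel--Riesz energy and for arbitrary compact $E$. If the literature only states the sufficiency direction via a second-moment argument, we would reproduce that argument directly. Take a measure $\mu$ on $E$ with finite energy and form the occupation measure $\int \mathbf 1(X(u)\in\cdot)\,\mu(\diff u)$. Its $L^2$ density has expected squared norm bounded by $\iint |x-y|^{-1/2}\mu(\diff x)\mu(\diff y)$, using the Gaussian density bound $\PP(|X(u)-X(v)|<\delta)\lesssim \delta |u-v|^{-1/2}$. A Paley--Zygmund argument then yields positive measure of $X(E)$ with positive probability.
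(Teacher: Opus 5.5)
Your proposal is correct and is essentially the paper's proof. Like the paper, you combine translation invariance of Lebesgue measure, the one-sided comparison of Theorem~\ref{thm: abs cont airy sheet}, and Khoshnevisan's Theorem 1.1 in \cite{khoshnevisan1998browniansheetimagesbesselriesz} for additive Brownian motion, transferred via the contrapositive of absolute continuity. The paper leaves implicit the Borel measurability of $\{f: |f(E)|>0\}$ and the rate-two normalisation, both of which you check. Your fallback Fourier/second-moment argument is also sound: it gives $|X(E)|>0$ almost surely, so Paley--Zygmund is not needed.
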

\begin{remark}
    By Frostman's characterisation of Hausdorff dimension, if $\mathrm{dim}_{\mathscr{H}}(E) > 1/2$, $\mathbb{P}(\big|\mathcal{S}(E)\big|>0)>0$ and by Corollary \ref{cor: haus dim airy sheet image} if $\mathrm{dim}_{\mathscr{H}}(E) < 1/2$, then $\big|\mathcal{S}(E)\big|=0$ almost surely.
\end{remark}

\begin{proof}
    This, is a direct application of the translation invariance of the Lebesgue measure, the absolute continuity of the Airy sheet (up to centering) against additive Brownian motion, Theorem \ref{thm: abs cont airy sheet} and \cite[Theorem 1.1]{khoshnevisan1998browniansheetimagesbesselriesz} (which is the analogous result of \ref{cor: bessel-riesz cap pos} for additive Brownian motion).
\end{proof}

\section{Appendix}\label{sec: appendix}

Next result shows infinite geodesics in the Airy line ensemble do not `jump instantaneously'.

\begin{lemma}\label{lemma: semi-inf geod jump time pos}
    Fix $x <0$, $1\le k < \ell$. Then, with $\varepsilon^\infty_{\ell, k}$ the last jump time of the almost-surely unique geodesic on the Airy line ensemble from $(x, \ell)$ to $(0, k)$, we have $\varepsilon^\infty_{\ell, k} < 0$ almost surely.
\end{lemma}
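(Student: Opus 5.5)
We argue by contradiction via the Brownian Gibbs property. Since $\varepsilon^\infty_{\ell,k}$ is the last jump time of the geodesic from $(x,\ell)$ to $(0,k)$, it is the time where the geodesic jumps from line $k+1$ to line $k$. So $\varepsilon^\infty_{\ell,k} = 0$ means the geodesic sits on lines $\ge k+1$ on all of $[x,0)$. We show this event has probability zero.

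By the metric composition law (Lemma \ref{Lemma: Metric Composition}), for every $z\in[x,0]$ we have
\[
\mathcal{A}[(x,\ell)\to(0,k)] \ge \mathcal{A}[(x,\ell)\to(z,k+1)] + \mathcal{A}_k(0)-\mathcal{A}_k(z).
\]
On the event $\{\varepsilon^\infty_{\ell,k}=0\}$ this is an equality at $z=0$. It follows that for all small $\delta>0$,
\[
\mathcal{A}[(x,\ell)\to(0,k+1)] - \mathcal{A}[(x,\ell)\to(-\delta,k+1)] \ge \mathcal{A}_k(0)-\mathcal{A}_k(-\delta).
\]
Write $\Phi(z) := \mathcal{A}[(x,\ell)\to(z,k+1)]$. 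This function is the top line of an iterated Skorokhod reflection of lines $k+1,\dots,\ell$, so it is measurable with respect to $\sigma(\mathcal{A}_j : j\ge k+1)$. The event therefore forces the increment of $\mathcal{A}_k$ over $[-\delta,0]$ to be dominated by the increment of $\Phi$ for every $\delta$. In other words, $0$ is a left record point of $\Phi-\mathcal{A}_k$ relative to time $0$.

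Next, condition on $\mathscr{F}^{\mathrm{ext}}_{\{k\}\times(x-1,1)}$. For $k\ge 2$ we instead resample lines $1,\dots,k$ jointly on $(x-1,1)$. By the Brownian Gibbs property, $\mathcal{A}_k$ on this window is, up to mutual absolute continuity on $[x,1/2]$ (Lemma \ref{lemma: bb comparison lemma}), a Brownian motion conditioned on a positive-probability avoidance event. Crucially, $\Phi$ is determined by the conditioning data, because it depends only on lines $\ge k+1$. The question then reduces to showing that, for a fixed continuous function $\Phi$ and a rate-two Brownian motion $B$,
\[
\PP\big(B(0)-B(-\delta) \le \Phi(0)-\Phi(-\delta)\ \text{for all small } \delta>0\big)=0.
\]
This fails for a completely general continuous $\Phi$: if $\Phi$ has a downward cusp of order larger than $\sqrt{\delta\log\log(1/\delta)}$ near $0$, the event could have positive probability. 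This is the main obstacle.

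To handle it, we use that $\Phi$ is itself locally Brownian, as a Skorokhod reflection of Brownian-like lines (cf.\ \cite[Theorem 7.9]{tassopoulos2025inhomogeneousbrownian}, or the bound $\Phi\ll\mu$ from \cite{sarkar2021brownian}), and is independent of the resampled $B$ given the conditioning data. The difference $B-\Phi$ is then locally absolutely continuous with respect to a Brownian motion of rate $4$, reflected at $0$. By time reversal at $0$ and Blumenthal's $0$--$1$ law, together with the law of the iterated logarithm, such a process almost surely takes values larger than its value at $0$ immediately to the left of $0$. Hence the event above has conditional probability zero. Averaging over the conditioning gives $\PP(\varepsilon^\infty_{\ell,k}=0)=0$. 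Combined with $\varepsilon^\infty_{\ell,k}\le 0$, this yields $\varepsilon^\infty_{\ell,k}<0$ almost surely.

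The remaining technical point is the independence between $\Phi$ and the reversed local behaviour of $B$ near $0$. We expect to obtain it by resampling the lines $1,\dots,k$ in the second step while keeping line $k+1$ fixed, and then invoking the absolute continuity of $\Phi$ separately.
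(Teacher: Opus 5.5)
Your argument is correct. Its skeleton matches the paper's: reduce $\{\varepsilon^\infty_{\ell,k}=0\}$ to $0$ maximising $\Phi-\mathcal{A}_k$ on $[x,0]$, replace $\mathcal{A}_k$ by an independent Brownian motion and $\Phi$ by something Brownian-like, and show such a maximiser is a.s.\ not at the endpoint. The decoupling mechanism and the endgame, however, differ.

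\begin{itemize}
\item \emph{How the paper decouples.} It gets independence for free by transferring lines $k,\dots,\ell$ at once to independent Brownian motions, using the multi-line Wiener density theorem of \cite{dauvergne2024wienerdensitiesairyline}. It then uses \cite{tassopoulos2025inhomogeneousbrownian} for the Brownian last passage value $B[(x,\ell)\to(\cdot,k+1)]$ on $[x/2,0]$. Finally, L\'evy's arcsine law shows the argmax on $[x/2,0]$ has a density.
\item \emph{How you decouple.} You freeze $\Phi$ by resampling only lines $1,\dots,k$ with the Brownian Gibbs property. This needs only the one-sided comparison (conditioned law $\ll$ free bridges) and only the local behaviour at $0$, so Blumenthal's $0$--$1$ law replaces the arcsine law.
\end{itemize}

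Your route has two costs that you should make explicit.

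\begin{itemize}
\item \emph{Absolute continuity of $\Phi$.} You need the marginal Brownian absolute continuity near $0$ of the Airy last passage value $\Phi=\mathcal{A}[(x,\ell)\to(\cdot,k+1)]$. This takes one more Gibbs step on the block $\llbracket k+1,\ell\rrbracket$ before invoking the Brownian last passage result. There is no circularity, since only the law of $\Phi$ enters.
\item \emph{The Fubini step.} State it explicitly rather than as ``independence given the conditioning data'', since conditionally $\Phi$ is simply deterministic. Let $h(\phi)$ be the Wiener probability of the event with $\Phi$ replaced by $\phi$. Then $\int h\,\diff(\mathrm{Law}\,\Phi)$ is the probability of the event under $\mathrm{Law}(\Phi)\otimes\mu$. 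This vanishes, because the time-reversed increments of $\Phi-B$ at $0$ are absolutely continuous with respect to a rate four Brownian motion, which is not $\le 0$ on any $(0,\delta_0]$. Hence $h(\Phi)=0$ almost surely.
\item \emph{Concluding from $h(\Phi)=0$.} On that event the conditional probability given $\mathscr{F}^{\mathrm{ext}}$ vanishes. This is because the avoidance event has positive conditional probability, and the bridge restricted to $[x,1/2]$ is mutually absolutely continuous with Brownian motion.
\end{itemize}

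A small wording fix: ``reflected at $0$'' should read ``time-reversed at $0$''.
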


\begin{proof}
We show $\PP(\varepsilon^\infty_{\ell, k} = 0)=0$. Expressing last passage values in terms of the Pitman transform see subsection \ref{subsec: pitman trans}, we have
    \[
    \varepsilon^\infty_{\ell, k} = \operatorname*{argmax}_{z\in [x, 0]}(\mathcal{A}[x \to (z, k+1)] + \mathcal{A}[(z, k)\to (0, k)])\,.
    \]
    Now, by the mutual absolute continuity of the centred of the Airy line ensemble with respect to independent Brownian motions, \cite[Theorem 1.1]{dauvergne2024wienerdensitiesairyline}, $\varepsilon^\infty_{\ell, k}$ is mutually absolutely continuous with respect to
    \[
    \operatorname*{argmax}_{z\in [x, 0]}(B[(x, \ell) \to (z, k+1)] + B[(z, k)\to (0, k)]) = \operatorname*{argmax}_{z\in [x, 0]}(B[(x, \ell) \to (z, k+1)] -B_k(z))\,,
    \]
    where $B$ is a family of independent Brownian motions.

    By \cite[Proposition 4.1]{tassopoulos2025inhomogeneousbrownian} and \cite[Theorem 7.1]{tassopoulos2025inhomogeneousbrownian}, the laws of $B[(x, \ell) \to (\cdot, k+1)]$ restricted to $[x/2,0]$ is absolutely continuous with respect to that of a standard Brownian motion starting from $0$ restricted to $[x,0]$. Thus, by independence of $B[(x, \ell) \to (\cdot, k+1)]$ with $B_k$, time-reversal and flip symmetry of Brownian motion,
    \begin{align*}
         \tilde{\varepsilon}^\infty_{\ell, k} &:= \operatorname*{argmax}_{z\in [x/2, 0]}(B[(x, \ell) \to (z, k+1)]-B_k(z))\\
         &= \operatorname*{argmax}_{z\in [x/2, 0]}(B[(x, \ell) \to (z, k+1)]-B[(x, \ell) \to (0, k+1)]-B_k(z))
    \end{align*}
    has the law of the argmax of a Brownian motion on $[x/2, 0]$. Now, by L\'{e}vy's arcsine law, $\tilde{\varepsilon}^\infty_{\ell, k}$ has a density with respect to the Lebesgue measure, which concludes the proof.
\end{proof}

\bibliographystyle{alpha}
\bibliography{refs.bib}

\end{document}